\newcommand{\Spin}[1]{\ensuremath{\text{\upshape\rmfamily Spin}(#1)}}
\newcommand{\Sp}[1]{\mathrm{Sp}(#1)}
\newcommand{\SO}[1]{\mathrm{SO}(#1)}
\newcommand{\GL}[1]{\mathrm{GL}(#1,\mathbb R)}
\newcommand{\lieso}[1]{\mathop{\mathfrak{so}}(#1)}
\newcommand{\liesp}[1]{\mathop{\mathfrak{sp}}(#1)}
\newcommand{\lieu}[1]{\mathop{\mathfrak{u}}(#1)}
\newcommand{\liespin}[1]{\mathop{\mathfrak{spin}}(#1)}
\newcommand{\U}[1]{\mathrm{U}(#1)}
\newcommand{\Gtwo}{\mathrm{G}_2}
\newcommand{\End}[1]{\mathrm{End}(#1)}
\newcommand{\Cl}[1]{\mathrm{Cl}(#1)}
\newcommand{\vol}{\mathrm{vol}}
\renewcommand{\Im}{\mathop{\mathrm{Im}}}
\newcommand{\Id}{\mathop{\mathrm{Id}}}
\newcommand{\tr}[1]{\mathop{\mathrm{tr}}(#1)}
\newcommand{\ug}{\;\shortstack{{\tiny def}\\=}\;}
\newcommand{\thform}{\ensuremath{\Theta}}
\newcommand{\snform}{\ensuremath{\Phi}}
\newcommand{\form}[1]{\ensuremath{\snform_{#1}}}
\newcommand{\spinform}[1]{\form{\Spin{#1}}}
\newcommand{\kahlerform}{\ensuremath{\omega}}
\newcommand{\quaternionform}{\ensuremath{\Omega}}
\newcommand{\Mathematica}{\textit{Mathematica}}
\newcommand{\shortform}[1]{\ensuremath{{\scriptstyle\boldsymbol{#1}}}}
\newcommand{\st}{\vert}
\newcommand{\I}{\mathcal{I}} 
\newcommand{\J}{J} 
\newcommand{\CC}{\mathbb{C}}
\newcommand{\HH}{\mathbb{H}}
\newcommand{\RR}{\mathbb{R}}
\newcommand{\ZZ}{\mathbb{Z}}
\newcommand{\OO}{\mathbb{O}}
\newcommand{\RH}{R^\HH}
\newcommand{\LH}{L^\HH}
\newcommand{\RO}{R}
\newcommand{\LO}{L}
\newcommand{\CP}[1]{\CC P^{#1}}
\newcommand{\HP}[1]{\HH P^{#1}}
\newcommand{\OP}[1]{\OO P^{#1}}
\newcommand{\OH}[1]{\OO H^{#1}}
\numberwithin{equation}{section}
\theoremstyle{plain}
\newtheorem{te}{Theorem}[section]
\newtheorem*{te*}{Theorem}
\newtheorem{pr}[te]{Proposition}
\newtheorem{co}[te]{Corollary}
\theoremstyle{definition}
\newtheorem{de}[te]{Definition}
\theoremstyle{remark}
\newtheorem{re}[te]{Remark}
\begin{document}

\title[\Spin{9} and almost complex structures on 16-dimensional manifolds]{$\Spin{9}$ and almost complex structures\\ on 16-dimensional manifolds 
}  

\subjclass[2010]{Primary 53C26, 53C27, 53C38}
\keywords{$\Spin{9}$, $\Spin{7}$, octonions, K\"ahler form.}
\thanks{Both authors were supported by the MIUR under the PRIN Project ``Geometria Differenziale e Analisi Globale''}
\date{\today}

\author{Maurizio Parton}
\address{Universit\`a di Chieti-Pescara\\ Dipartimento di Scienze, viale Pindaro 87, I-65127 Pescara, Italy}
\email{parton@sci.unich.it}
\author{Paolo Piccinni}
\address{Dipartimento di Matematica\\ Sapienza - Universit\`a di Roma \\
Piazzale Aldo Moro 2, I-00185, Roma, Italy 
}
\email{piccinni@mat.uniroma1.it}


\begin{abstract}
For a $\Spin{9}$-structure on a Riemannian manifold $M^{16}$ we write explicitly the matrix $\psi$ of its K\"ahler $2$-forms and the canonical $8$-form $\spinform{9}$. We then prove that $\spinform{9}$ coincides up to a constant with the fourth coefficient of the characteristic polynomial of $\psi$. This is inspired by lower dimensional situations, related to Hopf fibrations and to $\Spin{7}$. As applications, formulas are deduced for Pontrjagin classes and integrals of $\spinform{9}$ and $\spinform{9}^2$ in the special case of holonomy $\Spin{9}$.
\end{abstract}

\maketitle
\tableofcontents

\settocdepth{chapter}
\listoftables
\settocdepth{section}

\section{Introduction}

Although $\Spin{9}$ belongs to M.~Berger's list in his holonomy theorem, it has been known for a long time that the only simply connected complete Riemannian manifolds with holonomy $\Spin{9}$ are the Cayley projective plane $\OP{2}=\frac{\mathrm{F}_4}{\Spin{9}}$ and its dual, the Cayley hyperbolic plane $\OH{2}=\frac{\mathrm{F}_{4(-20)}}{\Spin{9}}$ (cf.~\cite{al}, \cite{br-gr}, as well as \cite[Chapter 10]{besse2}). It is also known that, on the unique irreducible $16$-dimensional $\Spin{9}$-module $\Delta_9$, the space $\Lambda^8$ of exterior $8$-forms contains a $1$-dimensional invariant subspace $\Lambda^8_1$.
Thus, any generator of $\Lambda^8_1$ can be viewed as a canonical $8$-form $\spinform{9}$ on $\RR^{16}$, which is $\Spin{9}$-invariant with respect to the standard $\Spin{9}$-structure.

In the same year $1972$ when the quoted paper \cite{br-gr} by R.~Brown and A.~Gray appeared, Berger published an article \cite{be} on the Riemannian geometry of rank one symmetric spaces, containing the following very simple definition of a
$\Spin{9}$-invariant $8$-form $\spinform{9}$ in $\RR^{16}$:
\begin{equation}\label{ber}
\spinform{9}\ug c\int_{\OP{1}}p_l^*\nu_l\,dl\enspace.
\end{equation}
Here $\nu_l$ is the volume form on the octonionic lines $l\ug\{(x,mx)\}$ or $l\ug\{(0,y)\}$ in $\OO^2\cong\RR^{16}$, $p_l:\OO^2\rightarrow l$ is the projection on $l$, the integral is taken over the ``octonionic projective line'' $\OP{1}=S^8$ of all the $l\subset\OO^2$ and $c$ is a normalizing constant. In the same article, Berger writes a similar definition: $\form{\Sp{n}\cdot\Sp{1}}\ug c\int_{\HP{n-1}}p_l^*\nu_l\,dl$ for a quaternionic $4$-form in $\HH^n \cong \RR^{4n}$. Note that such definitions of $\spinform{9}$ and $\form{\Sp{n}\cdot\Sp{1}}$ arise from distinguished $8$-planes or $4$-planes in the two geometries, appearing thus very much in the spirit of (at the time forthcoming) calibrations. It is also worth reminding that the stabilizers of $\spinform{9}$ in $\GL{16}$ and of $\form{\Sp{n}\cdot\Sp{1}}$ in $\GL{4n}$ are precisely the subgroups $\Spin{9}$ and $\Sp{n}\cdot\Sp{1}$, respectively (cf.~\cite[Pages 168--170]{co} and~\cite[Page 126]{Sa}).

The paper by Brown and Gray contains a different definition of $\spinform{9}$, as a Haar integral over $\Spin{8}$. A natural question is whether an explicit and possibly simple algebraic expression of $\spinform{9}$ can be written in $\RR^{16}$, in parallel with the usual definitions of the $\Gtwo$-invariant $3$-form $\form{\Gtwo}$ on $\RR^7$ or the $\Spin{7}$-invariant $4$-form $\spinform{7}$ on $\RR^8$ (see for example the books~\cite{J1} and~\cite{J2}). 

Indeed, some such algebraic expressions have already been written. Namely, K.~Abe and M.~Matsubara computed $\spinform{9}$ obtaining its $702$ terms from the triality principle of $\Spin{8}$ (see~\cite{Abe} and~\cite{am}, and note that some of the terms have to be corrected~\cite{amerratum}).
More recently, a different approach has been presented by M.~Castrill\'on~L\`opez, P.~Gadea and I.~Mykytyuk in~\cite{cgm}, where a detailed exam is given for the invariance of elements of $\Lambda^8(\RR^{16})$ under the generators of the group $\Spin{9}$. 

A major progress in understanding $\Spin{9}$-structures came in the context of weak holonomies by the work of Th.~Friedrich: in~\cite{fr} and~\cite{fr2} it is observed that the number of possible ``weakened'' holonomies $\Spin{9}$ is $16$, exactly like in the cases of the groups $\U{n}$ and $\Gtwo$, and also that a $\Spin{9}$-structure on $M^{16}$ can be described as a certain vector subbundle $V^9 \subset \End{TM}$. This fact suggests a similarity between $\Spin{9}$ and the quaternionic group $\Sp{n}\cdot\Sp{1}$.

More precisely, a $\Spin{9}$-structure is a rank $9$ real vector bundle $V^9\subset\End{TM}\rightarrow M$, locally spanned by self-dual involutions $\I_{\alpha}$, for $\alpha=1,\dots,9$, such that $\I_{\alpha}\circ\I_{\beta}=-\I_{\beta}\circ\I_{\alpha}$, for $\alpha\neq\beta$ (cf.\ Definition~\ref{def:spin9structure}). From these data, the local almost complex structures 
\begin{equation}\label{eq:alphaijintro}
\J_{\alpha\beta}\ug\I_{\alpha}\circ\I_{\beta}
\end{equation}
are defined on $M^{16}$, and the $9\times 9$ skew-symmetric matrix of their K\"ahler $2$-forms 
\begin{equation}\label{kae}
\psi\ug(\psi_{\alpha\beta})
\end{equation}
is naturally associated with the $\Spin{9}$-structure. The $36$ differential forms $\psi_{\alpha\beta}$, for $\alpha<\beta$, are thus a local system of K\"ahler $2$-forms of the $\Spin{9}$-manifold $(M^{16},V^9)$.

The first result of this paper is the explicit computation of the $702$ terms of $\spinform{9}$, according to the work by Abe and Matsubara, and on the grounds of Berger's definition of $\spinform{9}$. The computation was performed with the help of the software \Mathematica, and the result is shown in Table~\ref{explicitspin9} at page~\pageref{explicitspin9}.

The second result is the following formula for $\spinform{9}$, see Theorem~\ref{teo:main}.

\begin{te*}\label{mainthm}
Let $\spinform{9}=c\int_{\OP{1}}p_l^*\nu_l\,dl$ be the canonical $8$-form in $\RR^{16}$, and choose the constant $c$ in such a way that all its $702$ terms are integers, with no common factors. Then $c=\frac{110880}{\pi^4}$ and
\begin{equation}\label{eq:main}
\spinform{9}=\frac{1}{360}\tau_4(\psi)\enspace,
\end{equation}
where $\tau_4(\psi)$ is the fourth coefficient of the characteristic polynomial of the matrix $\psi$ of K\"ahler $2$-forms.
\end{te*}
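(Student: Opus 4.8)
The plan is to exploit the one-dimensionality of the space $\Lambda^8_1$ of $\Spin{9}$-invariant $8$-forms recalled in the Introduction, and thereby reduce the identity \eqref{eq:main} to the determination of a single constant. First I would verify that $\tau_4(\psi)$ is itself $\Spin{9}$-invariant. For $g\in\Spin{9}$ acting on $\RR^{16}=\Delta_9$, the self-dual involutions spanning $V^9$ transform as $g^{-1}\I_\alpha g=\sum_\gamma B_{\gamma\alpha}\I_\gamma$, where $B=B(g)\in\SO{9}$ is the image of $g$ under the vector representation; since each $\I_\alpha$ is orthogonal, the Kähler forms $\psi_{\alpha\beta}(X,Y)=\langle\I_\alpha\I_\beta X,Y\rangle$ satisfy $g^*\psi_{\alpha\beta}=\sum_{\gamma\delta}B_{\gamma\alpha}B_{\delta\beta}\,\psi_{\gamma\delta}$, that is $g^*\psi=B^{T}\psi B$. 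Because the entries $\psi_{\gamma\delta}$ are $2$-forms they commute in the exterior algebra, so the characteristic polynomial of $\psi$ is well defined and its coefficients are genuine invariants of the conjugacy class of $\psi$. As $B$ is orthogonal, $B^{T}\psi B=B^{-1}\psi B$, and since $g^*$ is an algebra homomorphism on forms we get $g^*\tau_4(\psi)=\tau_4(g^*\psi)=\tau_4(B^{-1}\psi B)=\tau_4(\psi)$. Hence $\tau_4(\psi)\in\Lambda^8_1$.

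Granting that $\tau_4(\psi)\not\equiv 0$, the one-dimensionality of $\Lambda^8_1$ forces $\tau_4(\psi)=\lambda\,\spinform{9}$ for some constant $\lambda$, and it only remains to compute $\lambda$ (and separately $c$). To do this I would pass to the standard model $\RR^{16}=\OO\oplus\OO$, where the nine involutions $\I_\alpha$ are written explicitly through left and right octonionic multiplications, and compute the $36$ forms $\psi_{\alpha\beta}$ in the associated orthonormal coframe. Writing $\tau_4(\psi)$ as the sum of the principal $4\times4$ minors of $\psi$, each such minor over an index set $\{i,j,k,l\}$ is $\mathrm{Pf}_{ijkl}\wedge\mathrm{Pf}_{ijkl}$ with $\mathrm{Pf}_{ijkl}=\psi_{ij}\wedge\psi_{kl}-\psi_{ik}\wedge\psi_{jl}+\psi_{il}\wedge\psi_{jk}$, the odd-size principal minors dropping out by skew-symmetry; this reduces $\tau_4(\psi)$ to a linear combination of the standard degree-$8$ monomials. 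Comparing this expansion with the explicit $702$-term expression of $\spinform{9}$ in Table~\ref{explicitspin9} simultaneously establishes the non-vanishing and yields $\lambda=360$, which is \eqref{eq:main}. Matching a single conveniently chosen monomial, or equivalently evaluating both $8$-forms on one adapted $8$-frame spanning a calibrated octonionic line, already fixes $\lambda$; the full comparison then serves as a consistency check.

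The value of $c$ is obtained directly from Berger's definition \eqref{ber}: carrying out the integration over $\OP{1}=S^8$ produces coefficients of the form (rational)$\,\times\pi^4$, and $c=\tfrac{110880}{\pi^4}$ is precisely the factor that clears all denominators together with the power of $\pi$, leaving integer coefficients of greatest common divisor $1$. I expect the main obstacle to be the size of the computation in the monomial basis rather than any conceptual difficulty: organizing the $\binom{9}{4}=126$ Pfaffian-squares and reconciling the sign and ordering conventions with the Abe--Matsubara normalization of $\spinform{9}$ is where the care is needed, which is presumably why the verification is carried out with \Mathematica.
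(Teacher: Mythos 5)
Your proposal is correct and follows essentially the same route as the paper: $\Spin{9}$-invariance of $\tau_4(\psi)$ plus one-dimensionality of $\Lambda^8_1$ reduces \eqref{eq:main} to a proportionality constant, which is then fixed by expanding $\tau_4(\psi)$ as the sum of squared Pfaffians of principal $4\times4$ submatrices and matching a single monomial (the paper uses $\shortform{12345678}$, with coefficients $-5040$ and $-14$) against Table~\ref{explicitspin9}, with $c$ determined by the \Mathematica\ integration of Berger's formula. The only difference is that you spell out the invariance argument ($g^*\psi=B^{-1}\psi B$ with $B\in\SO{9}$) that the paper leaves implicit, which is a welcome addition rather than a deviation.
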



Formula~\eqref{eq:main} for $\spinform{9}$ holds more generally for any $16$-dimensional manifold equipped with a $\Spin{9}$-structure. In particular, when the matrix~\eqref{kae} of K\"ahler forms can be interpreted as the matrix of local curvature forms of a linear connection in the real vector bundle $V^9 \rightarrow M^{16}$, then by Chern-Weil theory its second Pontrjagin class $p_2(V)$ is represented, up to a constant, by the closed form $\tau_4(\psi)$. This is certainly the case for a compact Riemannian manifold $M^{16}$ with holonomy $\Spin{9}$, i.e.\ either $\OP{2}$ or any compact quotient of $\OH{2}$. Thus, the third result of this paper is the representation through $\spinform{9}$ of the second Pontrjagin class of $\OP{2}$ or any compact quotient of $\OH{2}$, and a relation of the integrals of $\spinform{9}$ and $\spinform{9}^2$ with the volumes of $\OP{1}$ and $\OP{2}$ respectively, see Corollaries~\ref{hol} and~\ref{hol'}.

It is worth mentioning that our point of view is not strictly related to $\Spin{9}$ as holonomy, but follows the line of non-integrable geometries. For a unified approach to several non-integrable geometries, see the survey~\cite{AgrSLN}.

In this paper we also develop the analogy between $\Spin{9}$-structures on $16$-dimensional manifolds and either almost complex Hermitian structures in dimension $4$ or almost quaternion Hermitian structures in dimension $8$. This is done in Section~\ref{ld}, where this similarity is explained in the framework of what we call \emph{Hopf structure}, arising from the structure of the symmetry group of a Hopf fibration. In particular, in dimension $8$ the structure group $\Sp{1}\cdot\Sp{2}$ is generated by $5$ involutions, inducing $10$ K\"ahler forms $\theta_{\alpha\beta}$, and the left quaternionic $4$-form appears as the second coefficient of the characteristic polynomial of the matrix $(\theta_{\alpha\beta})$, see Proposition~\ref{pr:quaternion}.

In Section~\ref{spin7} we show that $\Spin{7}$ cannot be defined through $7$ involutions, but nevertheless it admits $21$ K\"ahler forms $\varphi_{\alpha\beta}$, and the structure $4$-form $\spinform{7}$ appears as the second coefficient of the characteristic polynomial of the matrix $(\varphi_{\alpha\beta})$, see Proposition~\ref{pr:spin7tau}.

In Section~\ref{sec:spin9} we explicitly compute the $36$ K\"ahler forms $\psi_{\alpha\beta}$ of a $\Spin{9}$-structure, and we prove that in the characteristic polynomial of $(\psi_{\alpha\beta})$ only the fourth coefficient $\tau_4(\psi)$ survives, see Proposition~\ref{pr:charpoly}. Section~\ref{sec:forma_spin9} is then devoted to the computation of Table~\ref{explicitspin9} and finally, in Section~\ref{sec:charpoly}, we prove that $360\spinform{9}=\tau_4(\psi)$ see Theorem~\ref{teo:main}, and we use Chern-Weil theory to obtain a few relations between $\spinform{9}$ and Pontrjagin classes of compact manifolds with holonomy $\Spin{9}$.

The $36$ almost complex structures $\J_{\alpha\beta}$ given in~\eqref{eq:alphaijintro} will be also used in two forthcoming papers, concerning the classical problem of vector fields on spheres of arbitrary dimension \cite{pp}, and the study of $16$-dimensional manifolds equipped with a locally conformal parallel $\Spin{9}$ metric \cite{pp1}.

For the reader's convenience, Table~\ref{synopsis} presents a list of symbols specific to this paper.
%

\begin{table}[H]
\centering
\begin{tabular}{|c|m{.8\textwidth}|}
\hline
\textbf{Symbol} & \multicolumn{1}{p{.8\textwidth}|}{\centering\textbf{Meaning}} \\ \hline
\centering $1,i,j,k,e,f,g,h$ & Units in the octonions $\OO$, with $ie=f,je=g,ke=h$. See Section~\ref{preliminaries}. \\ \hline
\centering $\spinform{7}$ & Structure $4$-form for $\Spin{7}$. Defined by \eqref{eq:spin7}. \\ \hline
\centering $\shortform{\alpha}$ & Boldfaced and scriptsized. Short for $dx_\alpha$, with $x_\alpha$ coordinates in $\RR^8$. The coordinates in $\RR^{16}$ are $(x_1,\dots,x_8,x_1',\dots,x_8')$, and we write also $\shortform{\alpha'}$ as a shortcut for $dx_\alpha'$. The wedge is omitted, so that $\shortform{123'4'}$ means $dx_1\wedge dx_2\wedge dx_3'\wedge dx_4'$. Note that this notation can be mixed with scalars: $-12\shortform{123'4'}$ means then $12$ times $dx_1\wedge dx_2\wedge dx_3'\wedge dx_4'$. \\ \hline
\centering $\I_\alpha$ & Involutions, same symbol with different meanings. They generate the symmetries of the Hopf fibrations $S^3\longrightarrow S^2$, $S^7\longrightarrow S^4$, $S^{15}\longrightarrow S^8$ for $\alpha=1,\dots,3$, $\alpha=1,\dots,5$, $\alpha=1,\dots,9$ respectively. See~\eqref{eq:Ipauli}, \eqref{eq:IH}, \eqref{eq:IO} and~\ref{involutions}. \\ \hline
\centering $\J_{\alpha\beta}$ & The complex structure $\I_\alpha\I_\beta$. For $\alpha=1,\dots,3$ see~\eqref{eq:Jpauli}; for $\alpha=1,\dots,5$ see~\eqref{eq:Jspin51} and~\eqref{eq:Jspin52}; for $\alpha=1,\dots,9$ see~\eqref{eq:J1} and~\eqref{eq:J2}. \\ \hline
\centering $\RH_\alpha,\LH_\alpha$ & Right and left multiplication in $\HH$. Here $\alpha\in\{i,j,k\}$, see~\eqref{eq:right} and~\eqref{eq:left}. \\ \hline
\centering $\RO_\alpha$ & Right multiplication in $\OO$. Here $\alpha\in\{i,j,k,e,f,g,h\}$, see~\eqref{matricesspin7}. \\ \hline
\centering $\theta=(\theta_{\alpha\beta})$ & Matrix of the K\"ahler forms of $\J_{\alpha\beta}$. Defined only in $\dim=8$, thus $\alpha,\beta=1,\dots,5$, see~\eqref{eq:theta1} and~\eqref{eq:theta2}. \\ \hline
\centering $\thform$ & Sum of the squares of $\theta_{\alpha\beta}$. Defined only in $\dim=8$, see~\eqref{Theta}. \\ \hline
\centering $\omega_{\RH_\cdot}$, $\omega_{\LH_\cdot}$ & K\"ahler forms of $\RH_\cdot$, $\LH_\cdot$. Defined only in $\dim=8$, see~\eqref{eq:leftkahlerH} and~\ref{pr:quaternion}. \\ \hline
\centering $\Omega_L$ & Left quaternionic $4$-form on $\HH^2$, see~\ref{pr:quaternion}. \\ \hline
\centering $\phi_\alpha$ & K\"ahler forms of $\RO_\alpha$. Here $\alpha\in\{i,j,k,e,f,g,h\}$. They generate $\Lambda^2_7$ in the decomposition $\Lambda^2\RR^8=\Lambda^2_7\oplus\Lambda^2_{21}$, see~\eqref{phi7}\ \\ \hline
\centering $\phi'_\alpha$, $\phi''_\alpha$, $\phi'''_\alpha$ & K\"ahler forms generating $\Lambda^2_{21}$ in the decomposition $\Lambda^2\RR^8=\Lambda^2_7\oplus\Lambda^2_{21}$. Here $\alpha\in\{i,j,k,e,f,g,h\}$, see~\eqref{phi21} \\ \hline
\centering $\RO_{\alpha\beta}$ & The complex structure $\RO_\alpha\RO_\beta$, where $\alpha,\beta\in\{i,j,k,e,f,g,h\}$, see~\eqref{21}. \\ \hline
\centering $\varphi=(\varphi_{\alpha\beta})$ & Matrix of the K\"ahler forms of $\RO_{\alpha\beta}$, where $\alpha,\beta\in\{i,j,k,e,f,g,h\}$, see~\eqref{eq:varphi7} and~\ref{pr:spin7tau}. \\ \hline
\centering $\psi=(\psi_{\alpha\beta})$ & Matrix of the K\"ahler forms of $\J_{\alpha\beta}$, where $\alpha,\beta\in\{1,\dots,9\}$, see~\eqref{28},~\eqref{8} and~\ref{pr:charpoly}. \\ \hline
\centering $\tau_\alpha(\psi)$ & The coefficients of $\det(tI-\psi)$. Only $\tau_4$ and $\tau_8$ are non-trivial, see~\ref{pr:charpoly}. \\ \hline
\centering $\omega$ & The $2$-form $c\int_{\CP{1}}p_l^*\nu_l\,dl$. With $c=2/\pi$ we have $\omega=\text{K\"ahler form in }\CC^2$, see~\eqref{integralformC}. \\ \hline
\centering $\Omega$ & The $4$-form $c\int_{\HP{1}}p_l^*\nu_l\,dl$. With $c=-120/\pi^2$ we have $\Omega=$ Right quaternion-K\"ahler form in $\HH^2$, see~\eqref{integralformH}. \\ \hline
\centering $\spinform{9}$ & The $8$-form $c\int_{\OP{1}}p_l^*\nu_l\,dl$. The constant $c=110800/\pi^4$ is chosen in such a way that the coefficients of $\spinform{9}$ be coprime integers, see~\eqref{eq:implicitspin9}. \\ \hline
\end{tabular}
\caption[Synoptic table of symbols]{Synoptic table of symbols specific to this paper.}\label{synopsis}
\end{table}

\section{Preliminaries and notations}\label{preliminaries}

In this section we state some standard facts and notations on octonions, which will be used throughout all the computations in this paper. For details, the reader is referred for instance to~\cite{ha-la}, which is consistent with our notation.

We will denote by $i,j,k$ the units of the quaternions $\HH$. A natural way to look at octonions $\OO$ is then as pairs of quaternions. Accordingly, the multiplication between $x,x'\in\OO$ is defined by writing
\[
x=h_1+h_2 e\enspace, \qquad x'=h'_1+h'_2e\enspace,
\]
and their product as
\begin{equation}\label{oct}
xx'=(h_1h'_1-\overline h'_2 h_2)+(h_2\overline h'_1+h'_2 h_1)e\enspace,
\end{equation}
where $\overline h'_1, \overline h'_2$ are the conjugates of quaternions $h'_1, h'_2\in\HH$ (see for instance \cite[page 139]{KoNFD2}).
Note that the identification
\[
x \leftrightarrow (h_1,h_2)
\]
is \emph{not} an isomorphism between $\OO$ and $\HH^2$ as quaternionic vector spaces. This is instead the case for the map
\begin{equation}
(h_1,h_2) \in \HH^2 \rightarrow h_1 +(kh_2\overline k)e \in\OO
\end{equation}
(cf.~\cite[page 5]{br-ha}), useful to compare structures related to quaternions and octonions. We will use this for example to write down Formulas~\eqref{matricesspin7}, concerning the almost complex structures associated with $\Spin{7}$.

Multiplication in $\OO$ is related through Formula~\eqref{oct} with multiplication in $\HH$. For this reason, in this paper we need to distinguish between them, and we will use the symbols $\RH$, $\LH$ for quaternionic multiplication, reserving $\RO$, $\LO$ to the octonion multiplication.

The conjugation in $\OO$ is defined through the one in $\HH$:
\[
\overline{x}\ug\overline h_1-h_2e\enspace,
\]
and allows to write the non-commutativity of $\OO$ as
\[
\overline{x x'}=\overline x'\overline x\enspace.
\]

The non-associativity of $\OO$ gives rise to the associator
\[
[x,y,z]\ug(xy)z-x(yz)\enspace,
\]
alternating and vanishing whenever two of its arguments are either equal or conjugate. The condition $[x,y,z]=0$ for orthonormal bases $\{x,y,z\}$ defines the \emph{associative 3-planes} $\zeta\subset\RR^7=\Im\OO$, also characterized as the ones closed with respect to the \emph{cross-product}
\[
x\times y\ug-\frac{1}{2}(\overline x y-\overline y x)=\Im(\overline y x)\enspace,\qquad \text{for } x, y\in \Im\OO\enspace.
\] 
The Grassmannian of associative $3$-planes in $\Im\OO$ is the quaternion K\"ahler Wolf space $\Gtwo/\SO{4}$.

The \emph{double cross product} on $\RR^8 =\OO$ is defined by
\begin{equation}\label{eq:doublecross}
x\times y\times z\ug\frac{1}{2}(x({\overline y}z)-z({\overline y}x))\enspace,
\end{equation}
or by the simpler expression $x(\overline y z)$ whenever $x,y,z$ are orthogonal.

If $<,>$ denotes the standard scalar product on $\RR^8$, the $4$-form
\begin{equation}\label{eq:spin7}
\spinform{7}(x,y,z,w)\ug<x,y\times z\times w>
\end{equation}
can be written in terms of the canonical basis $\{dx_1,\dots,dx_8\}\subset\Lambda^1\RR^8$ of $1$-forms in $\RR^8$:
\begin{equation}\label{Phi}
\spinform{7} = \shortform{1234}+\shortform{1256}+\shortform{1357}+\shortform{1368}-\shortform{1278}-\shortform{1467}+\shortform{1458}+\star\enspace,
\end{equation}
where $\shortform{\alpha\beta\gamma\delta}$ (smaller size and boldface) denotes $dx_\alpha\wedge dx_\beta\wedge dx_\gamma\wedge dx_\delta$, and $\star$ denotes the Hodge star, with the agreement that $a+\star\ug a+\star a$.


We will use the above notation $\shortform{\alpha\beta\gamma\delta}$ and $a+\star$ throughout all this paper.

Our definition of $\spinform{7}$ follows the choices in \cite[page 120]{ha-la}. Note that other references like \cite{J1} or \cite{J2} use different signs in some of the terms of $\spinform{7}$. The group $\Spin{7}$ can be defined as the subgroup of $\SO{8}$ leaving the $4$-form $\spinform{7}$ invariant. Equivalently (see also Section~\ref{spin7}), $\Spin{7}$ is the subgroup of $\SO{8}$ generated by the right multiplications $\RO_u$, for all imaginary units $u\in S^6\subset\Im\OO$. 

The $4$-form $\spinform{7}$ is self-dual. Indeed, the following decomposition in orthogonal $\Spin{7}$-invariant components applies to the space $\Lambda^4\RR^8=\Lambda^4_+\oplus \Lambda^4_-$ of $4$-forms in $\RR^8$:
\begin{equation}\label{dec7}
\Lambda^4_+ = \Lambda^4_1 \oplus \Lambda^4_7 \oplus \Lambda^4_{27}\enspace, \qquad
\Lambda^4_- = \Lambda^4_{35}\enspace.
\end{equation}
Here $\Lambda^4_\pm$ denote the self-dual and anti-self-dual $4$-forms, $\Lambda^4_l$ a $l$-dimensional vector space and $\Lambda^4_1$ is generated by $\spinform{7}$ (cf.~for example~\cite[page 240]{J2}). Similarly, $2$-forms in $\RR^8$ give rise to the following $\Spin{7}$-invariant orthogonal decomposition:
\begin{equation}\label{dec7'}
\Lambda^2\RR^8 = \Lambda^2_7 \oplus \Lambda^2_{21}\enspace,
\end{equation}
that will be further commented in Section~\ref{spin7}.

According to what we mentioned in the Introduction, we give now the definition of a $\Spin{9}$-structure in the framework of $G$-structures, that we will use in this paper.

\begin{de}\label{def:spin9structure}
A \emph{$\Spin{9}$-structure} on a Riemannian manifold $M^{16}$ is a rank $9$ vector subbundle $V^9\subset\End{TM}$, locally spanned by nine endomorphisms $\I_\alpha$ satisfying the following conditions:
\begin{equation}\label{top}
\I^2_\alpha=\Id\enspace,\qquad\I^*_\alpha=\I_\alpha\enspace,\qquad\I_\alpha\I_\beta=-\I_\beta\I_\alpha\qquad\text{if }\alpha\neq\beta\enspace,
\end{equation}
where $\I^*_\alpha$ denotes the adjoint of $\I_\alpha$.
\end{de}

Observe that Formula~\eqref{top} implies that compositions of $n$ different $\I_\alpha$'s are complex structures if $n\equiv 2,3 \mod 4$, and involutions if $n\equiv 0,1 \mod 4$.

For $M=\RR^{16}$, $\I_1,\dots,\I_9$ are generators of the Clifford algebra $\Cl{9}$, considered as endomorphisms of its $16$-dimensional real representation $\Delta_9 \cong \RR^{16} \cong \OO^2$. Accordingly, unit vectors $v\in S^8\subset\RR^9$ can be seen as symmetric endomorphisms $v:\Delta_9\rightarrow\Delta_9$ via the Clifford multiplication, and these endomorphisms generate $\Spin{9}$.

An explicit way to describe these generators is by writing $v\in S^8\subset\RR\times\OO$ as $r+u$, where $r\in\RR$, $u\in\OO$ and $r^2+u\overline u=1$, and acting on pairs $(x,x')\in\OO^2$ by
\begin{equation}\label{ha}
\left(
\begin{array}{c} 
x \\ 
x'
\end{array}
\right)\longrightarrow
\left(
\begin{array}{cc}
r & \RO_{\overline u} \\
\RO_u & -r
\end{array}
\right)  
\left(
\begin{array}{c}
x \\
x'
\end{array}
\right)\enspace,
\end{equation}
cf.~\cite[page 288]{ha}.

Observe that Formula~\eqref{ha} describes as well a set of generators for other Lie groups, provided that $v$ is taken respectively in $S^2$ and $S^4$, that is to say, provided that $x,x',u$ in~\eqref{ha} are taken respectively in $\CC$ and $\HH$.

\section{Low dimensions}\label{ld}

Formula~\eqref{ha} can be used to define actions of the spheres $S^2$ on $\CC^2$ and $S^4$ on $\HH^2$, by taking $v\in S^2\subset\RR\times\CC$ and $v\in S^4\subset\RR\times\HH$ respectively. This leads to alternative  definitions of a $\U{2}$-structure on $\RR^4$ and of a $\Sp{1}\cdot \Sp{2}$-structure on $\RR^8$, respectively. We briefly describe the analogy with symmetries of the Hopf fibrations presented in~\cite{gl-wa-zi}.

\begin{de} Let $V^3$ be a rank $3$ vector subbundle of the endomorphism bundle $\End{TM}$ on a Riemannian manifold $M^4$. We call $V^3$ a \emph{complex Hopf structure} on $M^4$ if $V^3$ is locally spanned by involutions $\I_1,\I_2,\I_3$ satisfying relations \eqref{top} and related, on open sets covering $M$, by functions giving $\SO{3}$ matrices .
\end{de}

Our terminology is motivated by  the standard choice $M^4=\CC^2$. Here one gets the standard complex Hopf structure from the elements $(r,u)=(0,1), (0,i), (1,0) \in S^2\subset\RR\times\CC$. Their actions on $\CC^2$ according to~\eqref{ha} generate the (identity component of the group of) symmetries of the Hopf fibration $S^3 \longrightarrow S^2$. 

We obtain in this way the Pauli matrices:
\begin{equation}\label{eq:Ipauli}
\I_1=\left(
\begin{array}{rr}
0 & 1 \\
1 & 0
\end{array}\right)\enspace,\qquad
\I_2=\left(
\begin{array}{rr}
0 & -i \\
i & 0
\end{array}\right)\enspace,\qquad
\I_3=\left(
\begin{array}{rr}
1 & 0 \\
0 & -1
\end{array}\right)\enspace,
\end{equation}
belonging to $\U{2}$. The compositions $\J_{\alpha\beta}\ug\I_\alpha\I_\beta$, for $\alpha<\beta$, are given by the complex structures 
\begin{equation}\label{eq:Jpauli}
\J_{12}=\left(
\begin{array}{rr} 
i & 0 \\
0 & -i
\end{array}
\right)\enspace,\qquad
\J_{13}=\left(
\begin{array}{rr}
0 & -1 \\
1 & 0
\end{array}
\right)\enspace,\qquad
\J_{23}=\left(
\begin{array}{rr}
0 & i \\
i & 0
\end{array}
\right)\enspace,
\end{equation}
acting on $\HH\cong\CC^2$ as multiplication on the right by unit quaternions: $\J_{12}=\RH_i,\J_{13}=\RH_j,\J_{23}=\RH_k$. Similarly, multiplication $\LH_i$ on the left by $i$ coincides with $\J_{123}\ug\I_1\I_2\I_3$.

From this, we see that our datum of $V^3\subset\End{T\RR^4}$ on $\RR^4$ gives rise, through the K\"ahler forms of $\J_{12},\J_{13},\J_{23}, \J_{123}$, to the decomposition of $2$-forms in $\RR^4$ as
\[
\Lambda^2\RR^4 \cong \lieso{4}= \lieu{1} \oplus \lieso{3}\oplus \Lambda^2_2,
\]
and the following observation follows.

\begin{pr} The datum of a complex Hopf structure on a Riemannian manifold $M^4$ is equivalent to an almost Hermitian structure, via the isomorphism $\U{1}\cdot\Sp{1}\cong\U{2}$.
\end{pr}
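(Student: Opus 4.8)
The plan is to exhibit explicit, mutually inverse constructions passing between the two structures, and then to check that they intertwine the corresponding reductions of the frame bundle. In one direction, from a complex Hopf structure $V^3$ I would produce the endomorphism $\J := \I_1\I_2\I_3$. The first thing to verify is that $\J$ does not depend on the local spanning frame $(\I_\alpha)$: if $(\I'_\alpha)$ is another such frame on an overlap, with $\I'_\alpha=\sum_\beta A_{\alpha\beta}\I_\beta$ and $A$ an $\SO{3}$-valued function, then the Clifford-type relations \eqref{top} make the triple product transform by the determinant of $A$, which equals $1$; hence $\I'_1\I'_2\I'_3 = \I_1\I_2\I_3$. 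Thus $\J$ is a globally well-defined tensor, and by the parity remark following Definition~\ref{def:spin9structure} (here $n=3\equiv 3 \bmod 4$) it is skew-adjoint with $\J^2=-\Id$; since each $\I_\alpha$ is orthogonal, so is $\J$. This already produces a compatible almost complex structure, hence an almost Hermitian structure.

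For the reverse direction I would, given a compatible $\J$, recover the subbundle intrinsically as $V^3 := \{S\in\End{TM} : S^*=S,\ \tr{S}=0,\ S\J=\J S\}$, the traceless self-adjoint endomorphisms commuting with $\J$. A short computation shows this space is $3$-dimensional, and that any $S=a\I_1+b\I_2+c\I_3$ satisfies $S^2=(a^2+b^2+c^2)\Id$, the cross terms cancelling by the anticommutation in \eqref{top}; so the unit sphere of $V^3$ consists exactly of self-adjoint involutions, and $V^3$ is locally spanned by a frame satisfying \eqref{top}, as required.

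These two assignments should then be checked to be mutually inverse, and it suffices to do so on the standard model $\RR^4=\CC^2=\HH$. There the spanning involutions are the Pauli matrices \eqref{eq:Ipauli}, the products \eqref{eq:Jpauli} give $\J_{123}=\LH_i$ acting as $i\cdot\Id$, the standard complex structure, and conversely the traceless self-adjoint endomorphisms commuting with $\LH_i$ are precisely the real span of the Pauli matrices. Equivariance under the respective stabilizers then propagates this identity from the model to an arbitrary $M^4$.

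Finally I would phrase the equivalence at the level of $G$-structures, which is where the isomorphism $\U{1}\cdot\Sp{1}\cong\U{2}$ enters and where I expect the only real subtlety to lie. The statement reduces to the claim that the stabilizer in $\SO{4}$ of the standard $V^3$ equals the stabilizer of the standard $\J$, namely $\U{2}$. The inclusion $\U{2}\subset\mathrm{Stab}(V^3)$ is the fact that $\U{2}=\U{1}\cdot\Sp{1}$ acts on the traceless self-adjoint endomorphisms by conjugation through the double cover $\Sp{1}=\mathrm{SU}(2)\to\SO{3}$, the central $\U{1}=\{\LH_{e^{i\theta}}\}$ acting trivially; this is exactly what produces the $\SO{3}$ transition functions of a complex Hopf structure, while the same central circle generates $\J=\LH_i$. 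The reverse inclusion is immediate from the well-definedness of $\J$: any $g$ preserving $V^3$ and inducing an orientation-preserving ($\SO{3}$) action conjugates $\J=\I_1\I_2\I_3$ to itself, hence commutes with $\J$ and lies in $\U{2}$. Pinning down this coincidence of stabilizers — equivalently, the determinant computation of the first step together with the $\mathrm{SU}(2)\to\SO{3}$ action — is the main point; everything else is routine.
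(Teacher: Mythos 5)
Your proof is correct, but it supplies considerably more than the paper does: the paper gives no proof of this proposition at all. It is presented as an observation following from the model computation --- the Pauli matrices \eqref{eq:Ipauli}, the identities $\J_{12}=\RH_i$, $\J_{13}=\RH_j$, $\J_{23}=\RH_k$, $\J_{123}=\LH_i$ --- and from the induced decomposition $\Lambda^2\RR^4\cong\lieso{4}=\lieu{1}\oplus\lieso{3}\oplus\Lambda^2_2$, in which the $\lieu{1}$ summand is spanned by the K\"ahler form of $\J_{123}$ and $\lieu{1}\oplus\lieso{3}\cong\lieu{2}$; the structure-group argument is only spelled out later, for the quaternionic analogue (matrices $B$ satisfying $B\I_\alpha=\I'_\alpha B$ with bases related by $\SO{5}$). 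Your argument replaces the appeal to the $2$-form decomposition by two explicit, mutually inverse constructions: the frame-independence of $\J=\I_1\I_2\I_3$ via $\det(A)=1$ (which is exactly the reason the definition insists on $\SO{3}$ rather than $\mathrm{O}(3)$ transition functions), and the intrinsic inverse $V^3=\{S:\ S^*=S,\ \tr{S}=0,\ S\J=\J S\}$, which the paper never writes down. Your stabilizer computation then coincides in substance with the paper's structure-group reasoning. What your route buys is a genuinely pointwise, coordinate-free equivalence; what the paper's route buys is the representation-theoretic picture (the placement of $\lieu{1}$ and $\lieso{3}$ inside $\lieso{4}$) that it reuses in dimensions $8$ and $16$.

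One step you should make explicit in the reverse direction: local orthonormal frames of your $V^3$ are a priori related only by $\mathrm{O}(3)$-valued functions, whereas the definition of a complex Hopf structure demands $\SO{3}$. This is not a real obstruction --- $V^3$ is canonically oriented by declaring a frame $(\I_1,\I_2,\I_3)$ positive when $\I_1\I_2\I_3=+\J$ rather than $-\J$, which is a consistent choice precisely because of your determinant computation --- but the phrase ``as required'' glosses over it. The point is not pedantic: an element such as $\LH_j\in\SO{4}$ preserves the subspace $V^3$ while anticommuting with $\J$, hence acts on $V^3$ by an orientation-reversing isometry; so the stabilizer of the unoriented bundle is strictly larger than $\U{2}$, and the $\SO{3}$/orientation condition you invoke in your final paragraph is exactly what cuts it down to $\U{2}$.
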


Similarly, Formula \eqref{top} suggests also the following:

\begin{de}  Let $V^5$ be a rank $5$ vector subbundle of the endomorphism bundle $\End{TM}$ on a Riemannian manifold $M^8$. We say that $V^5$ is a \emph{(right) quaternionic Hopf structure} on $M^8$ if $V^5$ is locally spanned by involutions $\I_1,\dots ,\I_5$ satisfying relations \eqref{top} and related, on open sets covering $M$, by functions giving $\SO{5}$ matrices .
\end{de}

Here the terminology comes again from a Hopf fibration. The standard situation is in fact $M^8=\HH^2$ and a basis of $V^5$ is obtained by Formula \eqref{ha}, with now $(r,u)\in S^4\subset\RR\times\HH$ at the five choices $(r,u)=(0,1),(0,i),(0,j),(0,k),(1,0)$. Their action generate $\Sp{2}\cdot\Sp{1}$, the group of symmetries of the Hopf fibration $S^7 \longrightarrow S^4$, defined by looking at $\HH^2$ as a right quaternionic vector space.

One gets in this way the involutions on $\HH^2$:
\begin{equation}\label{eq:IH}
\begin{aligned}
\I_1=\left(
\begin{array}{c|c}
0 & \Id \\ \hline
\Id & 0
\end{array}
\right)\enspace,\qquad
\I_2&=\left(
\begin{array}{c|c}
0 & -\RH_i \\ \hline
\RH_i & 0
\end{array}
\right)\enspace,\qquad &
\I_3&=\left(
\begin{array}{c|c}
0 & -\RH_j \\ \hline
\RH_j & 0
\end{array}
\right)\enspace,
\\
\I_4&=\left(
\begin{array}{c|c}
0 & -\RH_k \\ \hline
\RH_k & 0
\end{array}
\right)\enspace,\qquad &
\I_5&=\left(
\begin{array}{c|c}
\Id & 0 \\ \hline
0 & -\Id
\end{array}
\right)\enspace,
\end{aligned}
\end{equation}
where
\begin{equation}\label{eq:right}
\RH_i=\left(
\begin{array}{rrrr}
0 & -1 & 0 & 0 \\
1 & 0 & 0 & 0 \\
0 & 0 & 0 & 1 \\
0 & 0 & -1 & 0
\end{array}
\right)\enspace,\qquad
\RH_j=\left(
\begin{array}{rrrr}
0 & 0 & -1 & 0 \\
0 & 0 & 0 & -1 \\
1 & 0 & 0 & 0 \\
0 & 1 & 0 & 0
\end{array}
\right)\enspace,\qquad
\RH_k=\left(
\begin{array}{rrrr}
0 & 0 & 0 & -1 \\
0 & 0 & 1 & 0 \\
0 & -1 & 0 & 0 \\
1 & 0 & 0 & 0
\end{array}
\right)\enspace.
\end{equation}

The ten compositions $\J_{\alpha\beta}\ug\I_\alpha\I_\beta$, for $\alpha<\beta$, are thus the following complex structures on $\RR^8$:
\begin{equation}\label{eq:Jspin51}
\begin{aligned}
\J_{12}&=\left(
\begin{array}{c|c}
\RH_i & 0 \\ \hline
0 & -\RH_i
\end{array}
\right)\enspace,\qquad &
\J_{13}&=\left(	
\begin{array}{c|c}
\RH_j & 0 \\ \hline
0 & -\RH_j
\end{array}
\right)\enspace,\qquad &
\J_{14}&=\left(
\begin{array}{c|c}
\RH_k & 0 \\ \hline
0 & -\RH_k
\end{array}
\right)\enspace, \\
\J_{23}&=\left(
\begin{array}{c|c}
\RH_k & 0 \\ \hline
0 & \RH_k
\end{array}
\right)\enspace,\qquad &
\J_{24}&=\left(
\begin{array}{c|c}
-\RH_j & 0 \\ \hline
0 & -\RH_j
\end{array}
\right)\enspace,\qquad &
\J_{34}&=\left(
\begin{array}{c|c}
\RH_i & 0 \\ \hline
0 & \RH_i
\end{array}
\right)\enspace, \\
\end{aligned}
\end{equation}
and
\begin{equation}\label{eq:Jspin52}
\begin{aligned}
\J_{15}=\left(
\begin{array}{c|c}
0 & -\Id \\ \hline
\Id & 0
\end{array}
\right)\enspace,\quad
\J_{25}&=\left(
\begin{array}{c|c}
0 & \RH_i \\ \hline
\RH_i & 0
\end{array}
\right)\enspace,\quad &
\J_{35}&=\left(
\begin{array}{c|c}
0 & \RH_j \\ \hline
\RH_j & 0
\end{array}
\right)\enspace,\quad &
\J_{45}&=\left(
\begin{array}{c|c}
0 & \RH_k \\ \hline
\RH_k & 0
\end{array}
\right)\enspace.
\end{aligned}
\end{equation}

We obtain also ten further complex structures $\J_{\alpha\beta\gamma}\ug\I_\alpha\I_\beta\I_\gamma$, for $\alpha<\beta<\gamma$, that are easily seen to coincide -up to the sign of the permutation
$
\tiny{
\left(
\begin{array}{rrrrr}
1 & 2 & 3 & 4 & 5 \\
\alpha & \beta & \gamma & \delta & \epsilon
\end{array}
\right)
}
$- with the former $\J_{\delta \epsilon}$. Moreover, compositions $\I_\alpha\I_\beta\I_\gamma\I_\delta$ reproduce -up to the negative of the sign of the above permutation- the five involutions $\I_\epsilon$. Recall now that that a $\Sp{1} \cdot \Sp{2}$-structure , i.e. a left quaternion Hermitian structure, on $\RR^8$ is equivalent to decomposing $2$-forms as
\[
\Lambda^2 \RR^8 \cong \lieso{8} = \lieso{2} \oplus \liesp{2} \oplus \Lambda^2_{15},
\]
where $\liesp{2} \cong \lieso{5}$ is generated by the $\J_{\alpha \beta}$ (cf.~\cite[page 125]{Sa}).

Using the notation introduced with Formula~\eqref{Phi}, we can write the K\"ahler forms $\theta_{\alpha\beta}$ of $\J_{\alpha\beta}$ as
\begin{equation}\label{eq:theta1}
\begin{aligned}
\theta_{12} &= -\shortform{12}+\shortform{34}+\shortform{56}-\shortform{78}\enspace, &
\theta_{13} &= -\shortform{13}-\shortform{24}+\shortform{57}+\shortform{68}\enspace, &
\theta_{14} &= -\shortform{14}+\shortform{23}+\shortform{58}-\shortform{67}\enspace,\\
\theta_{23} &= -\shortform{14}+\shortform{23}-\shortform{58}+\shortform{67}\enspace, &
\theta_{24} &= \shortform{13}+\shortform{24}+\shortform{57}+\shortform{68}\enspace, &
\theta_{34} &= -\shortform{12}+\shortform{34}-\shortform{56}+\shortform{78}\enspace,\\
\end{aligned}
\end{equation}
and
\begin{equation}\label{eq:theta2}
\begin{aligned}
\theta_{15} &= -\shortform{15}-\shortform{26}-\shortform{37}-\shortform{48}\enspace, &
\theta_{25} &= -\shortform{16}+\shortform{25}+\shortform{38}-\shortform{47}\enspace, \\
\theta_{35} &= -\shortform{17}-\shortform{28}+\shortform{35}+\shortform{46}\enspace, &
\theta_{45} &= -\shortform{18}+\shortform{27}-\shortform{36}+\shortform{45}\enspace,
\end{aligned}
\end{equation}
so that, if $\theta\ug(\theta_{\alpha\beta})$ and $\tau_2({\theta})$ is the second coefficient of its characteristic polynomial, then a $4$-form 
\begin{equation}\label{Theta}
\begin{split}
\thform &\ug\tau_2(\theta)=\theta^2_{12} + \theta^2_{13} + \dots + \theta^2_{45} \\
&= -12\shortform{1234}-4\shortform{1256}-4\shortform{1357}+4\shortform{1368}-4\shortform{1278}-4\shortform{1467}-4\shortform{1458}+\star
\end{split}
\end{equation}
is defined.

We will need also the fact that the left multiplications by $i,j,k$
\begin{equation}\label{eq:left}
\LH_i=\left(
\begin{array}{rrrr}
0 & -1 & 0 & 0 \\
1 & 0 & 0 & 0 \\
0 & 0 & 0 & -1 \\
0 & 0 & 1 & 0
\end{array}
\right)\enspace,\qquad
\LH_j=\left(
\begin{array}{rrrr}
0 & 0 & -1 & 0 \\
0 & 0 & 0 & 1 \\
1 & 0 & 0 & 0 \\
0 & -1 & 0 & 0
\end{array}
\right)\enspace,\qquad
\LH_k=\left(
\begin{array}{rrrr}
0 & 0 & 0 & -1 \\
0 & 0 & -1 & 0 \\
0 & 1 & 0 & 0 \\
1 & 0 & 0 & 0
\end{array}
\right)
\end{equation}
have K\"ahler forms
\begin{equation}\label{eq:leftkahlerH}
\omega_{\LH_i}=-\shortform{12}-\shortform{34}-\shortform{56}-\shortform{78}\enspace,\qquad
\omega_{\LH_j}=-\shortform{13}+\shortform{24}-\shortform{57}+\shortform{68}\enspace,\qquad
\omega_{\LH_k}=-\shortform{14}-\shortform{23}-\shortform{58}-\shortform{67}\enspace,
\end{equation}
and if 
\[
\Omega_L\ug\omega^2_{\LH_i}+\omega^2_{\LH_j}+\omega^2_{\LH_k}
\]
is the (left) quaternionic $4$-form of $\HH^2\cong\RR^8$, the comparison gives
\begin{equation}\label{proportional}
-2\Omega_L=\thform{}\enspace.
\end{equation}

This proves the following:
\begin{pr}\label{pr:quaternion}
The skew-symmetric matrix $\theta=(\theta_{\alpha\beta})$, whose entries are the
K\"ahler forms of the complex structures $\J_{\alpha\beta}$ on $\RR^8$, allows to construct both the left quaternionic $4$-form $\Omega_L$ and the right K\"ahler $2$-forms $\omega_{\RH_i},\omega_{\RH_j},\omega_{\RH_k}$ as
\[
\Omega_L=-\frac{1}{2}\tau_2(\theta)
\]
and
\[
\omega_{\RH_i}=\theta_{34}\enspace,\qquad\omega_{\RH_j}=-\theta_{24}\enspace,\qquad\omega_{\RH_k}=\theta_{23}\enspace.
\]
\end{pr}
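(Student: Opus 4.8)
The statement splits into two independent verifications, both of which reduce to assembling material already recorded above. For the first formula the plan is simply to combine the definition of $\thform$ in~\eqref{Theta} with the proportionality~\eqref{proportional}. By definition $\tau_2(\theta)=\thform$: for the skew-symmetric matrix $\theta=(\theta_{\alpha\beta})$ the coefficient of $t^{n-2}$ in $\det(tI-\theta)$ equals $\sum_{\alpha<\beta}(\theta_{\alpha\alpha}\theta_{\beta\beta}-\theta_{\alpha\beta}\theta_{\beta\alpha})=\sum_{\alpha<\beta}\theta_{\alpha\beta}^2$, which is precisely the sum of squares appearing in~\eqref{Theta}. Formula~\eqref{proportional} then reads $-2\Omega_L=\thform=\tau_2(\theta)$, and this is exactly $\Omega_L=-\frac{1}{2}\tau_2(\theta)$.

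For the three right K\"ahler $2$-forms the idea is to recognise, among the ten complex structures $\J_{\alpha\beta}$ of~\eqref{eq:Jspin51} and~\eqref{eq:Jspin52}, the ones that represent right quaternionic multiplication on $\HH^2\cong\RR^8$. Viewing $\RR^8=\HH\oplus\HH$ as a right $\HH$-module, multiplication on the right by $i,j,k$ acts diagonally on the two summands, i.e.\ as the block matrices $\mathrm{diag}(\RH_i,\RH_i)$, $\mathrm{diag}(\RH_j,\RH_j)$, $\mathrm{diag}(\RH_k,\RH_k)$, with $\RH_i,\RH_j,\RH_k$ as in~\eqref{eq:right}. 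Comparing these with~\eqref{eq:Jspin51} one reads off at once
\[
\J_{34}=\mathrm{diag}(\RH_i,\RH_i),\qquad \J_{24}=\mathrm{diag}(-\RH_j,-\RH_j),\qquad \J_{23}=\mathrm{diag}(\RH_k,\RH_k),
\]
so that right multiplication by $i$ and by $k$ coincides with $\J_{34}$ and $\J_{23}$, while right multiplication by $j$ equals $-\J_{24}$. Since in our sign convention the K\"ahler $2$-form depends linearly on the complex structure (so that the form of $-J$ is the negative of the form of $J$), passing to K\"ahler forms yields $\omega_{\RH_i}=\theta_{34}$, $\omega_{\RH_k}=\theta_{23}$ and $\omega_{\RH_j}=-\theta_{24}$, with the explicit expressions supplied by~\eqref{eq:theta1}.

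There is no genuine obstacle in the argument: it is entirely bookkeeping of the explicit matrices together with a consistent sign convention for the K\"ahler forms. The only points demanding care are the minus sign in $\J_{24}=\mathrm{diag}(-\RH_j,-\RH_j)$, which is responsible for the sign in $\omega_{\RH_j}=-\theta_{24}$, and the observation that it is exactly the three structures $\J_{23},\J_{24},\J_{34}$ acting with the \emph{same} sign on both $\HH$-summands — rather than the opposite-sign structures $\J_{12},\J_{13},\J_{14}$ — that correspond to honest right multiplications on the right module $\HH^2$. The conceptual content worth stressing is that the single matrix $\theta$ simultaneously encodes the left structure, through the invariant $\tau_2(\theta)$, and the right structure, through three of its individual entries.
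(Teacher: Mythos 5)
Your proposal is correct and takes essentially the same route as the paper: the paper likewise obtains the first identity by defining $\thform\ug\tau_2(\theta)$ as the sum of squares in~\eqref{Theta} and invoking the explicit comparison~\eqref{proportional}, and the second set of identities follows, exactly as you argue, by reading off $\J_{34}=\mathrm{diag}(\RH_i,\RH_i)$, $\J_{24}=\mathrm{diag}(-\RH_j,-\RH_j)$, $\J_{23}=\mathrm{diag}(\RH_k,\RH_k)$ from~\eqref{eq:Jspin51}. Your two added justifications --- the principal $2\times 2$ minor expansion showing $\tau_2(\theta)=\sum_{\alpha<\beta}\theta_{\alpha\beta}^2$, and the linearity of the K\"ahler form in the complex structure, which produces the sign in $\omega_{\RH_j}=-\theta_{24}$ --- only make explicit what the paper leaves implicit.
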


On the other hand, one can easily check that matrices 
$
B=\left(
\begin{array}{c|c}
B' & B'' \\ 
\hline 
B''' & B'''' 
\end{array}
\right)\in\SO{8}
$ that commute with each of the involutions $\I_1,\dots,\I_5$ are the ones satisfying $B''=B'''=0$ and $B'=B''''\in\Sp{1}\subset\SO{4}$. Thus the subgroup preserving each of $\I_1,\dots,\I_5$ is the diagonal $\Sp{1}_\Delta \subset\SO{8}$. The subgroup of $\SO{8}$ we are interested in is indeed the structure group of the quaternionic Hopf structure $V^5$. It consists of matrices $B$ satisfying $B\I_\alpha=\I'_\alpha B$, with $\I_1,\dots,\I_5$ and $\I'_1,\dots,\I'_5$ bases of $V^5$ related by a $\SO{5}$ matrix. Thus this structure group is $\Sp{1}\cdot\Sp{2}$, hence:

\begin{pr} The datum of a (right) quaternionic Hopf structure on a Riemannian manifold $M^8$ is equivalent to a (left) almost quaternion Hermitian structure, i.e.\ to a $\Sp{1}\cdot\Sp{2}$-structure.
\end{pr}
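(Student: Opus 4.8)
The plan is to establish the equivalence at the level of structure groups: both a (right) quaternionic Hopf structure and a (left) $\Sp{1}\cdot\Sp{2}$-structure are reductions of the orthonormal frame bundle of $M^8$, and I would show they are reductions to \emph{the same} subgroup $G\subset\SO{8}$, so that the two data correspond bijectively under the associated-bundle construction. Since by definition a quaternionic Hopf structure is a reduction to the subgroup $G$ of $\SO{8}$ preserving the local model $V^5$ up to $\SO{5}$, the whole matter reduces to the identification $G=\Sp{1}\cdot\Sp{2}$.

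First I would make the structure group precise: $G$ consists of those $B\in\SO{8}$ with $B\I_\alpha B^{-1}\in V^5$ for all $\alpha$, equivalently $B\I_\alpha=\I'_\alpha B$ with $(\I'_\alpha)$ obtained from $(\I_\alpha)$ by an $\SO{5}$ matrix. Conjugation then defines a homomorphism $\rho\colon G\to\SO{5}$; it is orthogonal because conjugation by an orthogonal $B$ preserves the trace form $\langle A_1,A_2\rangle\propto\tr{A_1A_2^{*}}$ restricted to $V^5$, and it lands in $\SO{5}$ rather than $O(5)$ by the very definition of the Hopf structure, whose local frames of $V^5$ are related by $\SO{5}$ matrices. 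The proof then splits into computing $\ker\rho$ and $\operatorname{im}\rho$.

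For the kernel I would invoke the explicit matrix computation recorded just above the statement: an element of $\SO{8}$ commuting with all of $\I_1,\dots,\I_5$ must be block-diagonal with two equal $\Sp{1}$-blocks, so $\ker\rho=\Sp{1}_\Delta$, the diagonal copy acting by quaternionic scalar multiplication on $\HH^2$. For the image I would argue through Lie algebras. Each $\J_{\alpha\beta}=\I_\alpha\I_\beta$ is a complex structure, hence lies in $\SO{8}$ and belongs to $G$, and a short computation from \eqref{top} gives that $\rho(\J_{\alpha\beta})$ acts on $V^5$ as $-\Id$ on the $\alpha\beta$-plane and $+\Id$ on its orthogonal complement, i.e.\ as a rotation by $\pi$. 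As recalled above, the $\J_{\alpha\beta}$ span a subalgebra $\liesp{2}\cong\lieso{5}$ of $\lieso{8}$, whose connected subgroup is a copy of $\Spin{5}\cong\Sp{2}$ acting on $\RR^8$ as the real spin representation $\Delta_5\cong\HH^2$; since $d\rho$ identifies $\liespin{5}$ with $\lieso{5}$, the restriction of $\rho$ to this $\Sp{2}$ is exactly the double cover $\Spin{5}\to\SO{5}$, so $\rho$ is surjective with connected image.

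Combining the two halves, $G$ is generated by $\Sp{1}_\Delta$ and this $\Sp{2}=\Spin{5}$; the two subgroups commute (scalar multiplication commutes with the quaternion-linear $\Sp{2}$-action) and meet exactly in $\{\pm\Id\}=\ZZ_2$, whence $G=(\Sp{1}\times\Sp{2})/\ZZ_2=\Sp{1}\cdot\Sp{2}$. As both $\ker\rho$ and $\operatorname{im}\rho$ are connected, $G$ is connected of dimension $3+10=13$, so it acquires no extra components and equals $\Sp{1}\cdot\Sp{2}$, the structure group of a left almost quaternion Hermitian structure. This identifies the reduction defined by $V^5$ with an $\Sp{1}\cdot\Sp{2}$-structure and yields the equivalence. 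I expect the main obstacle to be this second half: pinning down $\operatorname{im}\rho$ and the extension type, i.e.\ verifying that the subgroup generated by the $\J_{\alpha\beta}$ is genuinely $\Spin{5}$ surjecting onto all of $\SO{5}$ rather than a proper or disconnected subgroup, and that $G$ gains no components beyond $\Sp{1}\cdot\Sp{2}$.
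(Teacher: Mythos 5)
Your proof is correct and follows essentially the same route as the paper: both identify the structure group of $V^5$ (matrices $B\in\SO{8}$ with $B\I_\alpha=\I'_\alpha B$ for an $\SO{5}$-related basis), use the centralizer computation to get the kernel $\Sp{1}_\Delta$, and conclude the group is $\Sp{1}\cdot\Sp{2}$. Your kernel--image analysis via $\rho$ and the $\Spin{5}\cong\Sp{2}$ generated by the $\J_{\alpha\beta}$ merely makes explicit the surjectivity and extension details that the paper asserts without proof, so it is a welcome fleshing-out rather than a different argument.
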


In the above discussion we looked at the standard  $\U{2}$ and $\Sp{1}\cdot\Sp{2}$-structures on $\RR^4$ and $\RR^8$, through the decompositions of $2$-forms
\[
\lieso{4}=\lieu{1}\oplus\lieso{3}\oplus\Lambda^2_2\enspace,\qquad\lieso{8}=\liesp{1}\oplus\liesp{2}\oplus\Lambda^2_{15}
\]
and orthonormal frames in the component $\lieso{3}$ and $\liesp{2}$, respectively. The last components $\Lambda^2_2$ and $\Lambda^2_{15}$ describe all the similar structures on the linear spaces $\RR^4$ and $\RR^8$. Thus, such decompositions give rise to the spaces $\SO{4}/\U{2}$ and $\SO{8}/\Sp{1}\cdot\Sp{2}$, the spaces of all possible structures in the two cases.

Definition~\ref{def:spin9structure} of a $\Spin{9}$-structure on a Riemannian manifold $M^{16}$ corresponds to what is now coherent to call an \emph{octonionic Hopf structure} on $M^{16}$, via Formula~\eqref{top} and choices
\[
(r,u)=(0,1),(0,i),(0,j),\dots,(0,h),(1,0)\in S^8 \subset \RR\times\OO\enspace.
\]
Thus $\Spin{9}$-structures can be viewed as analogues, in dimension $16$, of $\U{2}$ structures in dimension $4$ and of $\Sp{1}\cdot\Sp{2}$ structures in dimension $8$. 

Summarizing:

\begin{co}\label{involutions}
The actions $
\left(
\begin{array}{c} 
x \\ 
x'
\end{array}
\right)\longrightarrow
\left(
\begin{array}{cc}
r & \RO_{\overline u} \\
\RO_u & -r
\end{array}
\right)  
\left(
\begin{array}{c}
x \\
x'
\end{array}
\right)\enspace $, when $u,x,x' \in \CC,\HH,\OO$ (and in any case $r \in \RR$ and $r^2+u\overline u =1$) generate the groups $\U{2}$, $\Sp{1}\cdot\Sp{2}$, $\Spin{9}$ of symmetries
of the Hopf fibrations
\[
S^3 \longrightarrow S^2, \qquad S^7 \longrightarrow S^4, \qquad S^{15} \longrightarrow S^8
\]
(in the first case just the identity component, cf. \cite{gl-wa-zi}). The corresponding $G$-structures on Riemannian manifolds $M^4$, $M^8$, $M^{16}$ can be described through vector subbundles $V \subset \End TM$ of rank $3,5,9$, respectively. Any such $V$ is locally generated by self-dual involutions $\mathcal I_\alpha$ satisfying $\mathcal I_\alpha \mathcal I_\beta = -\mathcal I_\beta \mathcal I_\alpha$ for $\alpha \neq \beta$ and related, on open neighborhoods covering $M$, by functions giving matrices in $\SO{3}$, $\SO{5}$, $\SO{9}$.
\end{co}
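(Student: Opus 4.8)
The plan is to handle the three cases $\mathcal A=\CC,\HH,\OO$ (so $\dim_\RR\mathcal A=2,4,8$ and $v=r+u\in S^{\dim\mathcal A}\subset\RR\times\mathcal A$) simultaneously by one computation in the flat models, and then to globalize. Writing $A_v\ug\left(\begin{smallmatrix}r&\RO_{\overline u}\\ \RO_u&-r\end{smallmatrix}\right)$ as in~\eqref{ha}, I would first check that $A_v$ is a self-dual involution. Self-duality $A_v^*=A_v$ reduces to $r\in\RR$ together with $\RO_u^*=\RO_{\overline u}$, the latter being the inner-product compatibility $\langle xu,y\rangle=\langle x,y\overline u\rangle$ valid in every composition algebra. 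The identity $A_v^2=\Id$ then follows because the off-diagonal blocks cancel (the scalar $r$ commutes with $\RO_u$), while each diagonal block equals $r^2+\RO_{\overline u}\RO_u=(r^2+u\overline u)\Id=\Id$, using the alternative-law identity $(xu)\overline u=\abs{u}^2x$ and $v\in S^{\dim\mathcal A}$.

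Next I would show that $v\mapsto A_v$ is a Clifford map, $A_vA_{v'}+A_{v'}A_v=2\langle v,v'\rangle\Id$. Expanding the block product, the off-diagonal terms again cancel, and the diagonal contributes $2rr'\Id+(\RO_{\overline u}\RO_{u'}+\RO_{\overline{u'}}\RO_u)$. The key point is that the bracket equals $2\langle u,u'\rangle\Id$: it is exactly the polarization of the single-variable identity $\RO_{\overline u}\RO_u=\abs{u}^2\Id$, and since $\RO$ is linear in its subscript this polarization is purely formal and uses no associativity of triple products, so it holds verbatim for $\OO$. Hence the diagonal is $2(rr'+\langle u,u'\rangle)\Id=2\langle v,v'\rangle\Id$. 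Taking $v_1,\dots,v_k$ ($k=3,5,9$) to be an orthonormal basis of $\RR\times\mathcal A$ and setting $\I_\alpha\ug A_{v_\alpha}$ then produces precisely relations~\eqref{top}, realizing the $\I_\alpha$ as the standard generators of $\Cl3,\Cl5,\Cl9$ acting on the real spin modules $\RR^4,\RR^8,\RR^{16}$.

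To identify the generated groups I would pass to even products of the $\I_\alpha$, which generate $\Spin3,\Spin5,\Spin9$, and use the exceptional isomorphisms $\Spin3\cong\Sp1$ and $\Spin5\cong\Sp2$. Together with the residual circle these give the symmetry group $\U1\cdot\Sp1\cong\U2$ in the complex case (only the identity component being reached there), $\Sp1\cdot\Sp2$ in the quaternionic case, and $\Spin9$ in the octonionic case. That these coincide with the symmetry groups of the Hopf fibrations $S^3\to S^2$, $S^7\to S^4$, $S^{15}\to S^8$ has already been recorded in the two Propositions above and in the discussion following Definition~\ref{def:spin9structure}, following~\cite{gl-wa-zi}; I would simply cite these.

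Finally I would globalize. A $G$-structure on $M$ is a reduction of the orthonormal frame bundle; trivializing it locally and pulling back the model data yields local self-dual involutions $\I_\alpha$ spanning a rank $k$ subbundle $V\subset\End{TM}$ and satisfying~\eqref{top}, and conversely such a $V$ with $\SO k$-valued frame transitions reconstructs the reduction. Two local spanning frames of involutions are two orthonormal bases of the same fibre of $V$, so the change of frame is the action of the structure group on $V$ through its vector representation $\Spin{k}\to\SO{k}$ (under the exceptional isomorphisms, with the extra $\U1$ in dimension $4$ and the left $\Sp1$ in dimension $8$ acting trivially on $V$), whence the transition functions land in $\SO3,\SO5,\SO9$. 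The step I expect to be the most delicate is precisely this last one: checking that conjugation by the structure group permutes the involution frames through the $k$-dimensional vector representation, so that the cocycle lies in $\SO k$ rather than in $\mathrm{O}(k)$ or a larger group, and that the non-vector factors act trivially on $V$.
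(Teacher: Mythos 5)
Your proposal is correct in substance, and its algebraic core takes a genuinely different route from the paper. The paper has no unified verification of the relations~\eqref{top}: it checks the complex and quaternionic cases on explicit matrices (the Pauli matrices~\eqref{eq:Ipauli}, the block involutions~\eqref{eq:IH}), quotes Harvey~\cite{ha} in Section~\ref{preliminaries} for the fact that the endomorphisms~\eqref{ha} are Clifford generators whose action gives $\Spin{9}$, delegates the identification with the Hopf symmetry groups to~\cite{gl-wa-zi}, and obtains the bundle statement from the definitions of complex/quaternionic Hopf structures together with the two unnumbered equivalence propositions of Section~\ref{ld}, whose essential computation is that the commutant of $\I_1,\dots,\I_5$ in $\SO{8}$ is the diagonal $\Sp{1}_\Delta$. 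Your argument---alternativity for the one-variable identity $(xu)\overline{u}=|u|^2x$, then purely formal bilinear polarization to get $A_vA_{v'}+A_{v'}A_v=2\langle v,v'\rangle\Id$---proves~\eqref{top} for $\CC$, $\HH$, $\OO$ simultaneously and is self-contained; what it buys is exactly the uniformity that the paper replaces by case-by-case matrices plus citation.

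One point, however, needs repair in your middle paragraph. The ``residual'' $\U{1}$ in dimension $4$ and the left $\Sp{1}$ in dimension $8$ are \emph{not} generated by the involutions, nor by any products of them: in the quaternionic case each $A_v$ commutes with the left scalar multiplications $\mathrm{diag}(\LH_q,\LH_q)$ (right and left multiplications commute in $\HH$), so the generated group lies in---and, by your own even-products argument, equals---the copy of $\Sp{2}$ of left-$\HH$-linear isometries; in the complex case every $A_v$ has complex determinant $-1$, so the generated group is $\mathrm{SU}(2)\cup i\,\mathrm{SU}(2)\subsetneq\U{2}$. Literal generation holds only in the $\Spin{9}$ case, where the centralizer $\{\pm\Id\}$ already sits inside the image of $\Spin{9}$. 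Consequently ``together with the residual circle these give the symmetry group'' cannot be a statement about generation; it is the separate statement---the one the paper actually proves, and the one your final paragraph correctly isolates---that the structure group $\{B\in\SO{n}:BVB^{-1}=V,\ B \text{ acting on } V \text{ through } \SO{k}\}$ equals the product of the centralizer of $V$ with the group generated by the $A_v$. A final write-up should therefore include the three centralizer computations ($\U{1}$ scalars, $\Sp{1}_\Delta$, $\{\pm\Id\}$), or explicitly read the corollary's word ``generate'' in this structure-group sense (a looseness the statement itself shares). A minor misreading as well: the parenthetical on the identity component means that $\U{2}$ is only the identity component of the full symmetry group of $S^3\to S^2$ (which also contains conjugation), not that the involutions reach only part of $\U{2}$.
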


\section{The K\"ahler forms of a $\Spin{7}$-structure}\label{spin7}

We saw that $\U{2}$, $\Sp{1}\cdot\Sp{2}$ and $\Spin{9}$ can be described through $3$, $5$ and $9$ involutions satisfying relations~\eqref{top}. We now show that a similar approach cannot be pursued with $\Spin{7}$-structures, that is, $\Spin{7}$ cannot be described by $7$ involutions satisfying relations~\eqref{top}.


\begin{pr}\label{pr:ind}
Let $\I_1,\dots,\I_{2k+1}$ be involutions in $\RR^n$ satisfying~\eqref{top}. The compositions $\J_{\alpha\beta}\ug\I_\alpha\I_\beta$, for $\alpha<\beta$, and $\J_{\alpha\beta\gamma}\ug\I_\alpha\I_\beta\I_\gamma$, for $\alpha<\beta<\gamma$ are linearly independent complex structures on $\RR^n$.
\end{pr}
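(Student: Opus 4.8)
The plan is to show that all the listed operators are mutually orthogonal for the Frobenius inner product $\langle A,B\rangle=\tr{A^* B}$ on $\End{\RR^n}$, which is positive definite and hence forces linear independence. First I would record that each product is indeed a complex structure: the relations~\eqref{top} make every $\I_\alpha$ a self-adjoint orthogonal involution, so any product of distinct $\I_\alpha$'s is orthogonal, while $(\I_\alpha\I_\beta)^2=-\Id$ and $(\I_\alpha\I_\beta\I_\gamma)^2=-\Id$ follow by anticommuting the repeated factors past one another — this is the parity remark made right after Definition~\ref{def:spin9structure}.

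The heart of the matter is a trace--vanishing lemma for a \emph{Clifford monomial} $M=\I_{\gamma_1}\cdots\I_{\gamma_m}$ with distinct indices and $1\le m\le 2k+1$: I claim $\tr{M}=0$ whenever $m$ is even, and also whenever $m$ is odd but $m<2k+1$. This comes from invariance of the trace under conjugation by a single generator $\I_\beta$. Such conjugation is an algebra automorphism, and by~\eqref{top} it sends each factor $\I_{\gamma_i}$ to $-\I_{\gamma_i}$ if $\gamma_i\neq\beta$ and to $\I_{\gamma_i}$ if $\gamma_i=\beta$; hence $\I_\beta M\I_\beta^{-1}=(-1)^{m-1}M$ when $\beta$ occurs in $M$ and $(-1)^m M$ when it does not. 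Taking $\beta$ inside $M$ for $m$ even, and $\beta$ outside $M$ for $m$ odd (which is possible exactly because $m<2k+1$), gives $\tr{M}=-\tr{M}$ in both cases, so $\tr{M}=0$.

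Finally I would assemble the Gram matrix of the family $\{\J_{\alpha\beta}\}\cup\{\J_{\alpha\beta\gamma}\}$. For two members $P,Q$, orthogonality of the $\I_\alpha$ gives $P^*=P^{-1}$, and reducing $P^* Q$ by~\eqref{top} together with $\I_\alpha^2=\Id$ cancels every repeated index and leaves $\pm$ a Clifford monomial $M_3$ supported on the symmetric difference of the two index sets. For $P=Q$ this monomial is $\Id$ and $\langle P,P\rangle=n$; for $P\neq Q$ the symmetric difference is nonempty and $\langle P,Q\rangle=\pm\tr{M_3}$. The length bookkeeping is the one delicate point, and is the step I expect to be the crux: the symmetric difference of two sets of sizes in $\{2,3\}$ has size in $\{1,2,3,4,5,6\}$. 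Even sizes are traceless unconditionally by the lemma; the odd sizes $1,3,5$ need a generator lying outside $M_3$, i.e.\ that $M_3$ is not the full product of all $2k+1$ generators. Since an odd symmetric difference here has size at most $5$, this holds as soon as $2k+1\ge 7$, precisely the regime relevant to $\Spin{7}$. Under this condition the Gram matrix equals $n\,\Id$, so the family is linearly independent.
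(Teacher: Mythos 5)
Your proof is correct where it applies, and it follows the same overall strategy as the paper's --- linear independence via orthogonality for the trace pairing $\langle A,B\rangle=\tr{A^*B}$ --- but the vanishing mechanism is different. The paper kills each cross trace by a factorization into (skew-symmetric)$\circ$(symmetric): for disjoint pairs it writes $\J^*_{\alpha\beta}\J_{\gamma\delta}=\J_{\beta\alpha\gamma}\circ\I_\delta$, whose trace vanishes, and then appeals to ``similar arguments''. Your conjugation lemma (conjugating a Clifford monomial $M$ of length $m$ by a generator multiplies it by $(-1)^{m-1}$ or $(-1)^{m}$ according to whether the generator occurs in $M$) is a single uniform device: it disposes of all even-length monomials unconditionally and of all odd-length monomials of length $<2k+1$, which is cleaner than the case-by-case factorizations.

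The more substantial difference is that your ``delicate point'' is real, and it is precisely the point the paper's proof skips. The paper verifies orthonormality of the $\J_{\alpha\beta}$ among themselves and of the $\J_{\alpha\beta\gamma}$ among themselves, but never the mixed traces $\tr{\J^*_{\alpha\beta}\J_{\gamma\delta\epsilon}}$ --- and these are exactly the ones whose reduced monomials have odd length $1,3,5$. When $2k+1=5$ and the two index sets are disjoint, the reduced monomial is the full product $\I_1\I_2\I_3\I_4\I_5$, which is central with square $+\Id$ and need not be traceless; in fact the combined statement is false for $k=2$: for the five involutions~\eqref{eq:IH} on $\RR^8$, the paper itself observes in Section~\ref{ld} that $\J_{\alpha\beta\gamma}=\pm\J_{\delta\epsilon}$, a linear dependence between a triple and a pair. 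So the restriction $2k+1\ge 7$ you impose is not a defect of your method but a necessary hypothesis for the combined family (your argument also covers $2k+1=3$, where the only odd length arising is $1$); and it is all the paper needs, since Corollary~\ref{noreal} invokes the proposition only for $2k+1=7$ --- indeed only for the independence of the triples among themselves, for which all cross monomials have even length and your lemma gives the result for every $k$ with no restriction at all.
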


\begin{proof}
We already observed in Section~\ref{preliminaries} that $\J_{\alpha\beta}$ and $\J_{\alpha\beta\gamma}$ are complex structures. Now observe that \eqref{top}~implies, for any $\alpha=1,\dots,n$, that $\tr{\I^*_\alpha\I_\alpha}=1$, and for $\alpha <\beta$, that $\tr{\I^*_\alpha\I_\beta}=0$. Thus the $\I_\alpha$ are orthonormal and symmetric. By a similar argument, $\tr{\J^*_{\alpha\beta}\J_{\alpha\beta}}=1$ and $\tr{\J^*_{\alpha\beta}\J_{\gamma\delta}}=\tr{\I_\beta\I_\alpha\I_\gamma\I_\delta}=0$ if any of $\gamma,\delta$ equals $\alpha$ or $\beta$. Finally, for $\alpha\neq\gamma$ and $\beta\neq\delta$, note that $\J^*_{\alpha\beta}\J_{\gamma\delta}$ is the composition of the skew-symmetric $\J_{\beta\alpha\gamma}$ and the symmetric $\I_\delta$ and as such its trace is necessarily zero. Similar arguments show that the $\J_{\alpha\beta\gamma}$, for $\alpha<\beta<\gamma$, are orthonormal.
\end{proof}

\begin{co}\label{noreal}
The $\Spin{7}$-structures on $\RR^8$ cannot be defined through $7$ endomorphisms satisfying relations~\eqref{top}.
\end{co}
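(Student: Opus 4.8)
The plan is to argue by contradiction through a dimension count, using Proposition~\ref{pr:ind} as the essential tool. Suppose that $\RR^8$ admitted seven endomorphisms $\I_1,\dots,\I_7$ satisfying~\eqref{top}. Then I would be exactly in the hypothesis of Proposition~\ref{pr:ind} with $n=8$ and $2k+1=7$, so that the double products $\J_{\alpha\beta}=\I_\alpha\I_\beta$ (for $\alpha<\beta$) together with the triple products $\J_{\alpha\beta\gamma}=\I_\alpha\I_\beta\I_\gamma$ (for $\alpha<\beta<\gamma$) form a \emph{linearly independent} family of complex structures on $\RR^8$.

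First I would simply count these. There are $\binom{7}{2}=21$ double products and $\binom{7}{3}=35$ triple products, for a total of $21+35=56$ linearly independent endomorphisms of $\RR^8$.

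Next I would observe that each of these complex structures is skew-symmetric, hence lives in $\lieso{8}$. Indeed, the adjoint reverses the order of factors, and bringing a product of distinct symmetric pairwise anticommuting involutions back to its original order contributes the sign $(-1)^{\binom{m}{2}}$, so such a product is skew-symmetric precisely when $m\equiv 2,3\pmod 4$; this is the case both for the $\J_{\alpha\beta}$ and for the $\J_{\alpha\beta\gamma}$, as one checks directly from $(\I_\alpha\I_\beta)^*=\I_\beta\I_\alpha=-\I_\alpha\I_\beta$ and $(\I_\alpha\I_\beta\I_\gamma)^*=\I_\gamma\I_\beta\I_\alpha=-\I_\alpha\I_\beta\I_\gamma$. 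Thus all $56$ of these independent endomorphisms sit inside the single space $\lieso{8}$ of skew-symmetric $8\times 8$ matrices.

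The contradiction is then immediate, since $\dim\lieso{8}=\binom{8}{2}=28<56$. Consequently no seven involutions satisfying~\eqref{top} can exist on $\RR^8$ at all, and in particular a $\Spin{7}$-structure cannot be encoded by such a system. The argument is essentially forced once Proposition~\ref{pr:ind} is available; the only point that requires a moment's care is to notice that the two families of complex structures, despite having different word-lengths, nevertheless all lie in the same $28$-dimensional space $\lieso{8}$, which the $56$ independent elements plainly overflow.
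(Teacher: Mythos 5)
Your proof is correct and takes essentially the same route as the paper: invoke Proposition~\ref{pr:ind} and then contradict the fact that all these complex structures lie in the $28$-dimensional space of skew-symmetric endomorphisms of $\RR^8$ (equivalently $\Lambda^2\RR^8$, decomposition~\eqref{dec7'}). The only cosmetic difference is that the paper uses just the $35$ triple products $\J_{\alpha\beta\gamma}$, whereas you also include the $21$ double products to reach $56$; either count already exceeds $28$.
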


\begin{proof}
For any choice of $7$ endomorphisms $\{\I_\alpha\}$ in $\RR^8$ satisfying relations~\eqref{top}, the complex structures $\J_{\alpha\beta\gamma}$, for $\alpha<\beta<\gamma$, would give rise to $35$ linearly independent skew-symmetric endomorphisms, by Proposition~\ref{pr:ind}. But this would contradict decomposition~\eqref{dec7'} of $2$-forms in $\RR^8$ under $\Spin{7}$.
\end{proof}

Nevertheless, the right multiplications by $i,j,k,e,f\ug ie,g\ug je,h\ug ke\in\OO$ define $7$ complex structures $\RO_i,\dots,\RO_h$ on $\RR^8$. As mentioned in Section~\ref{preliminaries}, these complex structures lie in $\Spin{7}\subset\SO{8}$.

We will now use Formula~\eqref{oct} to explicitly write the matrix form of $\RO_i,\dots,\RO_h$. If $x=h_1+h_2e\in\OO$, we obtain
\[
\begin{aligned}
xi = h_1i + (-h_2 i)e\enspace,\qquad xj &= h_1j + (-h_2 j)e\enspace,\qquad & xk &= h_1k + (-h_2 k)e\enspace, \\ 
xe &= -h_2 + (h_1)e\enspace,\qquad & xf &= ih_2 + (i h_1)e\enspace,\\
xg &= jh_2 + (jh_1)e\enspace,\qquad & xh &= kh_2 + (kh_1)e\enspace,
\end{aligned}
\]
and thus their matrices read
\begin{equation}\label{matricesspin7}
\begin{aligned}
\RO_i=\left(
\begin{array}{c|c}
\RH_i & 0 \\ \hline
0 & -\RH_i
\end{array}
\right)\enspace,\qquad
\RO_j&=\left(
\begin{array}{c|c}
\RH_j & 0 \\ \hline
0 & -\RH_j
\end{array}
\right)\enspace,\qquad &
\RO_k&=\left(
\begin{array}{c|c}
\RH_k & 0 \\ \hline
0 & -\RH_k
\end{array}
\right)\enspace, \\
\RO_e&=\left(
\begin{array}{c|c}
0 & -\Id \\ \hline
\Id & 0
\end{array}
\right)\enspace,\qquad &
\RO_f&=\left(
\begin{array}{c|c}
0 & \LH_i \\ \hline
\LH_i & 0
\end{array}
\right)\enspace,\\
\RO_g&=\left(
\begin{array}{c|c}
0 & \LH_j \\ \hline
\LH_j & 0
\end{array}
\right)\enspace,\qquad &
\RO_h&=\left(
\begin{array}{c|c}
0 & \LH_k \\ \hline
\LH_k & 0
\end{array}
\right)\enspace.
\end{aligned}
\end{equation}

Correspondingly, their K\"ahler forms
\begin{equation}\label{phi7}
\begin{aligned}
\phi_{i} = -\shortform{12}+\shortform{34}+\shortform{56}-\shortform{78}\enspace,\qquad
\phi_{j} &= -\shortform{13}-\shortform{24}+\shortform{57}+\shortform{68}\enspace,\qquad &
\phi_{k} &= -\shortform{14}+\shortform{23}+\shortform{58}-\shortform{67}\enspace,\qquad \\
\phi_{e} &= -\shortform{15}-\shortform{26}-\shortform{37}-\shortform{48}\enspace,\qquad &
\phi_{f} &= -\shortform{16}+\shortform{25}-\shortform{38}+\shortform{47}\enspace,\qquad \\
\phi_{g} &= -\shortform{17}+\shortform{28}+\shortform{35}-\shortform{46}\enspace,\qquad &
\phi_{h} &= -\shortform{18}-\shortform{27}+\shortform{36}+\shortform{45}
\end{aligned}
\end{equation}
generate the first component $\Lambda^2_7$ in the decomposition \eqref{dec7'}. 

In \cite[page 12]{dhm} it has been observed that $\RR^8$ admits $28=\binom{8}{2}$ linearly independent K\"ahler forms and that they can be defined, up to sign, as the right hand sides of~\eqref{phi7}, corrected either with all plus signs or with an even number of minus signs. Thus, the remaining $21$ K\"ahler forms are generators of the other component $\Lambda^2_{21}$ in~\eqref{dec7'}, that coincides with the Lie algebra $\liespin{7}$. Explicitly, such generators are
\begin{equation}\label{phi21}
\begin{aligned}
\phi'_{i} &= \shortform{12}+\shortform{34}+\shortform{56}+\shortform{78}\enspace,\qquad & \phi''_{i} &= -\shortform{12}-\shortform{34}+\shortform{56}+\shortform{78}\enspace,\qquad & \phi'''_{i} &= -\shortform{12}+\shortform{34}-\shortform{56}+\shortform{78}\enspace, \\
\phi'_{j} &= \shortform{13}+\shortform{24}+\shortform{57}+\shortform{68}\enspace,\qquad & \phi''_{j} &= -\shortform{13}+\shortform{24}-\shortform{57}+\shortform{68}\enspace,\qquad & \phi'''_{j} &= -\shortform{13}+\shortform{24}+\shortform{57}-\shortform{68}\enspace, \\
\phi'_{k} &= \shortform{14}+\shortform{23}+\shortform{58}+\shortform{67}\enspace,\qquad & \phi''_{k} &= -\shortform{14}-\shortform{23}+\shortform{58}+\shortform{67}\enspace,\qquad & \phi'''_{k} &= -\shortform{14}+\shortform{23}-\shortform{58}+\shortform{67}\enspace, \\
\phi'_{e} &= -\shortform{15}-\shortform{26}+\shortform{37}+\shortform{48}\enspace,\qquad & \phi''_{e} &=-\shortform{15}+\shortform{26}-\shortform{37}+\shortform{48}\enspace,\qquad & \phi'''_{e} &= -\shortform{15}+\shortform{26}+\shortform{37}-\shortform{48}\enspace, \\
\phi'_{f} &= \shortform{16}+\shortform{25}+\shortform{38}+\shortform{47}\enspace,\qquad & \phi''_{f} &= -\shortform{16}-\shortform{25}+\shortform{38}+\shortform{47}\enspace,\qquad & \phi'''_{f} &= -\shortform{16}+\shortform{25}+\shortform{38}-\shortform{47}\enspace, \\
\phi'_{g} &= \shortform{17}+\shortform{28}+\shortform{35}+\shortform{46}\enspace,\qquad & \phi''_{g} &= -\shortform{17}-\shortform{28}+\shortform{35}+\shortform{46}\enspace,\qquad & \phi'''_{g} &= -\shortform{17}+\shortform{28}-\shortform{35}+\shortform{46}\enspace,\\
\phi'_{h} &= \shortform{18}+\shortform{27}+\shortform{36}+\shortform{45}\enspace,\qquad & \phi''_{h} &= -\shortform{18}+\shortform{27}-\shortform{36}+\shortform{45}\enspace,\qquad & \phi'''_{h} &= -\shortform{18}+\shortform{27}+\shortform{36}-\shortform{45}\enspace. 
\end{aligned}
\end{equation}

On the other hand, one can write the compositions $\RO_{\alpha\beta}\ug\RO_\alpha\RO_\beta$, for $\alpha,\beta\in\{i,j,k,e,f,g,h\}$:
\begin{equation}\label{21}
\begin{aligned}
\RO_{ij}&=\left(
\begin{array}{c|c}
-\RH_k & 0 \\ \hline
0 & -\RH_k
\end{array}
\right)\enspace,\quad &
\RO_{ik}&=\left(
\begin{array}{c|c}
\RH_j & 0 \\ \hline
0 & \RH_j
\end{array}
\right)\enspace,\quad &
\RO_{ie}&=\left(
\begin{array}{c|c}
0 & -\RH_i \\ \hline
-\RH_i & 0
\end{array}
\right)\enspace, \\ 
\RO_{if}&=\left(
\begin{array}{c|c}
0 & \RH_i\LH_i \\ \hline
-\RH_i\LH_i & 0
\end{array}
\right)\enspace,\quad &
\RO_{ig}&=\left(
\begin{array}{c|c}
0 & \RH_i\LH_j \\ \hline
-\RH_i\LH_j & 0
\end{array}
\right)\enspace,\quad &
\RO_{ih}&=\left(
\begin{array}{c|c}
0 & \RH_i\LH_k \\ \hline
-\RH_i\LH_k & 0
\end{array}
\right)\enspace, \\ 
\RO_{jk}&=\left(
\begin{array}{c|c}
-\RH_i & 0 \\ \hline
0 & -\RH_i
\end{array}
\right)\enspace,\quad &
\RO_{je}&=\left(
\begin{array}{c|c}
0 & -\RH_j \\ \hline
-\RH_j & 0
\end{array}
\right)\enspace,\quad &
\RO_{jf}&=\left(
\begin{array}{c|c}
0 & \RH_j\LH_i \\ \hline
-\RH_j\LH_i & 0
\end{array}
\right)\enspace, \\
\RO_{jg}&=\left(
\begin{array}{c|c}
0 & \RH_j\LH_j \\ \hline
-\RH_j\LH_j & 0
\end{array}
\right)\enspace,\quad &
\RO_{jh}&=\left(
\begin{array}{c|c}
0 & \RH_j\LH_k \\ \hline
-\RH_j\LH_k & 0
\end{array}
\right)\enspace,\quad &
\RO_{ke}&=\left(
\begin{array}{c|c}
0 & -\RH_k \\ \hline
-\RH_k & 0
\end{array}
\right)\enspace, \\
\RO_{kf}&=\left(
\begin{array}{c|c}
0 & \RH_k\LH_i \\ \hline
-\RH_k\LH_i & 0
\end{array}
\right)\enspace,\quad &
\RO_{kg}&=\left(
\begin{array}{c|c}
0 & \RH_k\LH_j \\ \hline
-\RH_k\LH_j & 0
\end{array}
\right)\enspace,\quad &
\RO_{kh}&=\left(
\begin{array}{c|c}
0 & \RH_k\LH_k \\ \hline
-\RH_k\LH_k & 0
\end{array}
\right)\enspace, \\
\RO_{ef}&=\left(
\begin{array}{c|c}
-\LH_i & 0 \\ \hline
0 & \LH_i
\end{array}
\right)\enspace,\quad &
\RO_{eg}&=\left(
\begin{array}{c|c}
-\LH_j & 0 \\ \hline
0 & \LH_j
\end{array}
\right)\enspace,\quad &
\RO_{eh}&=\left(
\begin{array}{c|c}
-\LH_k & 0 \\ \hline
0 & \LH_k
\end{array}
\right)\enspace, \\
\RO_{fg}&=\left(
\begin{array}{c|c}
\LH_k & 0 \\ \hline
0 & \LH_k
\end{array}
\right)\enspace,\quad &
\RO_{fh}&=\left(
\begin{array}{c|c}
-\LH_j & 0 \\ \hline
0 & -\LH_j
\end{array}
\right)\enspace,\quad &
\RO_{gh}&=\left(
\begin{array}{c|c}
\LH_i & 0 \\ \hline
0 & \LH_i
\end{array}
\right)\enspace, \\
\end{aligned}
\end{equation}
where left and right multiplication on $\HH$ are given by Formulas~\eqref{eq:right} and~\eqref{eq:left}, and their compositions are
\begin{small}
\begin{equation*}
\begin{aligned}
\RH_i\LH_i&=\left(
\begin{array}{rrrr}
-1 & 0 & 0 & 0 \\
0 &-1 & 0 & 0 \\
0 & 0 & 1 & 0 \\
0 & 0 & 0 & 1
\end{array}
\right)\thinspace,\enspace &
\RH_i\LH_j&=\left(
\begin{array}{rrrr}
0 & 0 & 0 & -1 \\
0 & 0 & -1 & 0 \\
0 &-1 & 0 & 0 \\
-1 & 0 & 0 & 0
\end{array}
\right)\thinspace,\enspace &
\RH_i\LH_k&=\left(
\begin{array}{rrrr}
0 & 0 & 1 & 0 \\
0 & 0 & 0 & -1 \\
1 & 0 & 0 & 0 \\
0 &-1 & 0 & 0
\end{array}
\right)\thinspace, \\
\RH_j\LH_i&=\left(
\begin{array}{rrrr}
0 & 0 & 0 & -1 \\
0 & 0 & 1 & 0 \\
0 & 1 & 0 & 0 \\
-1 & 0 & 0 & 0
\end{array}
\right)\thinspace,\enspace &
\RH_j\LH_j&=\left(
\begin{array}{rrrr}
1 & 0 & 0 & 0 \\
0 & -1 & 0 & 0 \\
0 & 0 & 1 & 0 \\
0 & 0 & 0 & -1
\end{array}
\right)\thinspace,\enspace &
\RH_j\LH_k&=\left(
\begin{array}{rrrr}
0 & 1 & 0 & 0 \\
1 & 0 & 0 & 0 \\
0 & 0 & 0 & 1 \\
0 & 0 & 1 & 0
\end{array}
\right)\thinspace, \\
\RH_k\LH_i&=\left(
\begin{array}{rrrr}
0 & 0 & -1 & 0 \\
0 & 0 & 0 & -1 \\
-1 & 0 & 0 & 0 \\
0 & -1 & 0 & 0
\end{array}
\right)\thinspace,\enspace &
\RH_k\LH_j&=\left(
\begin{array}{rrrr}
0 & 1 & 0 & 0 \\
1 & 0 & 0 & 0 \\
0 & 0 & 0 & -1 \\
0 & 0 & -1 & 0
\end{array}
\right)\thinspace,\enspace &
\RH_k\LH_k&=\left(
\begin{array}{rrrr}
-1 & 0 & 0 & 0 \\
0 & 1 & 0 & 0 \\
0 & 0 & 1 & 0 \\
0 & 0 & 0 & -1
\end{array}
\right)\thinspace.
\end{aligned}
\end{equation*}
\end{small}

A computation shows then that their K\"ahler forms $\varphi_{\alpha\beta}$ coincide, up to sign, with the forms~\eqref{phi21}. We write explicitly some of them:
\begin{equation}\label{eq:varphi7}
\phi'_{i} = -\varphi_{gh}\enspace,\qquad
\phi''_{i} = -\varphi_{ef}\enspace,\qquad
\phi'''_{i}= -\varphi_{jk}\enspace,\dots,\enspace
\phi'_{h} = -\varphi_{ig}\enspace,\qquad
\phi''_{h}= -\varphi_{ke}\enspace,\qquad
\phi'''_{h}= \varphi_{jf}\enspace. 
\end{equation}

We can now prove that the $8$-form $\spinform{7}$ defined in~\eqref{Phi} can be recovered from any of the two components in the decomposition \eqref{dec7'}.

\begin{pr}\label{pr:spin7tau}
The $7$ K\"ahler forms $\phi_i,\dots,\phi_h$ of the complex structures $\RO_i,\dots,\RO_h$ and the $21$ K\"ahler forms $\varphi_{ij},\dots,\varphi_{gh}$ of $\RO_{\alpha\beta}$, for $\alpha,\beta\in\{i,j,k,e,f,g,h\}$, satisfy
\[
\spinform{7}=-\frac{1}{6}(\phi_{i}^2+\dots+\phi_{h}^2)=\frac{1}{6}(\varphi_{ij}^2+\varphi_{ik}^2+\dots+\varphi_{gh}^2)=\frac{1}{6}\tau_2(\varphi)\enspace,
\]
where $\tau_2(\varphi)$ is the second coefficient of the characteristic polynomial of $\varphi\ug(\varphi_{\alpha\beta})$.
Thus $\spinform{7}$ is, up to a constant, the sum of squares of elements of an orthonormal basis in any of the components of $\Lambda^2\RR^8=\Lambda^2_7\oplus\Lambda^2_{21}$.
\end{pr}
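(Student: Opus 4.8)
The plan is to recognize both sums of squares as $\Spin{7}$-invariant $4$-forms and then pin down the scalar factors by reading off a single coefficient. First I would note that $\Spin{7}\subset\SO{8}$ acts orthogonally on $\Lambda^2\RR^8$ and preserves each summand of~\eqref{dec7'}, hence acts orthogonally on $\Lambda^2_7$ and on $\Lambda^2_{21}$. The seven forms~\eqref{phi7} are mutually orthogonal and each of squared norm $4$, so they form an orthogonal basis of $\Lambda^2_7$ of equal length; the same holds for the twenty-one forms~\eqref{phi21} spanning $\Lambda^2_{21}=\liespin{7}$. For any orthonormal basis $\{w_a\}$ of an invariant subspace and any $g\in\Spin{7}$ acting on it by the orthogonal matrix $O$ (so $g\cdot w_a=\sum_b O_{ba}w_b$), one has
\[
g\cdot\sum_a w_a\wedge w_a=\sum_{b,c}\Big(\sum_a O_{ba}O_{ca}\Big)w_b\wedge w_c=\sum_b w_b\wedge w_b,
\]
since $\sum_a O_{ba}O_{ca}=\delta_{bc}$; rescaling by the common length shows $\sum_\alpha\phi_\alpha^2$ and $\sum_{\alpha<\beta}\varphi_{\alpha\beta}^2$ are likewise $\Spin{7}$-invariant. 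By~\eqref{dec7} the invariant $4$-forms make up exactly the line $\Lambda^4_1=\langle\spinform{7}\rangle$, so each of these sums is a scalar multiple of $\spinform{7}$.

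Next I would compute the two scalars by reading off the coefficient of a single monomial, say $\shortform{1234}$. Only the generators built from the pairs $\shortform{12},\shortform{34},\shortform{13},\shortform{24},\shortform{14},\shortform{23}$ contribute, namely $\phi_i,\phi_j,\phi_k$ on the one side and $\phi'_i,\phi''_i,\phi'''_i$ together with their $j,k$ analogues on the other. Using $\shortform{12}\wedge\shortform{34}=\shortform{14}\wedge\shortform{23}=\shortform{1234}$ and $\shortform{13}\wedge\shortform{24}=-\shortform{1234}$, each relevant square contributes $\pm2\,\shortform{1234}$, and summing gives coefficient $-6$ for $\sum_\alpha\phi_\alpha^2$ and $+6$ for $\sum_{\alpha<\beta}\varphi_{\alpha\beta}^2$. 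Since $\spinform{7}$ has coefficient $+1$ on $\shortform{1234}$ by~\eqref{Phi}, this yields $\sum_\alpha\phi_\alpha^2=-6\,\spinform{7}$ and $\sum_{\alpha<\beta}\varphi_{\alpha\beta}^2=6\,\spinform{7}$, i.e.\ the first two claimed identities. As a cross-check, since $\sum_{I<J}(e_I\wedge e_J)\wedge(e_I\wedge e_J)=0$ and $\Lambda^2_7\oplus\Lambda^2_{21}$ is an orthogonal decomposition of $\Lambda^2\RR^8$, the two sums of squares are forced to be opposite, consistent with $-6$ and $+6$.

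The last identity $\sum_{\alpha<\beta}\varphi_{\alpha\beta}^2=\tau_2(\varphi)$ is formal. The entries of $\varphi=(\varphi_{\alpha\beta})$ are $2$-forms, which commute under $\wedge$, so $\det(tI-\varphi)$ may be handled as over a commutative ring and its second coefficient $\tau_2(\varphi)$ equals the sum of the $2\times2$ principal minors. As $\varphi$ is skew-symmetric with vanishing diagonal, each minor is $-\varphi_{\alpha\beta}\varphi_{\beta\alpha}=\varphi_{\alpha\beta}^2$, whence $\tau_2(\varphi)=\sum_{\alpha<\beta}\varphi_{\alpha\beta}^2$.

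The conceptual core---invariance forcing proportionality to $\spinform{7}$---is immediate once~\eqref{dec7} is in hand, so I expect the only genuine work to be sign bookkeeping: confirming that the spanning forms~\eqref{phi7} and~\eqref{phi21} are truly orthogonal of equal norm (so the invariance argument applies verbatim) and tracking the orientation signs in the single-coefficient computation. These are finite, mechanical checks, readily verified by hand or with \Mathematica.
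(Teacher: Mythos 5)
Your proof is correct, but it takes a genuinely different route from the paper's. The paper proves this proposition by direct computation: it expands the full sum ${\phi'_i}^2+{\phi''_i}^2+\dots+{\phi'''_h}^2$, observes that the result equals $-\phi_i^2-\dots-\phi_h^2$ (Formula~\eqref{globalphi}), and compares with the explicit expression~\eqref{Phi} of $\spinform{7}$, the bridge to the $\varphi_{\alpha\beta}$ being the sign identification~\eqref{eq:varphi7}; no invariance argument is invoked. You instead derive the identity conceptually: the sums of squares over orthogonal, equal-norm bases of the two invariant summands in~\eqref{dec7'} are $\Spin{7}$-invariant $4$-forms, the trivial component of $\Lambda^4\RR^8$ is exactly the line $\Lambda^4_1=\langle\spinform{7}\rangle$ by~\eqref{dec7} (here you implicitly use that $\Lambda^4_7$, $\Lambda^4_{27}$, $\Lambda^4_{35}$ are nontrivial irreducible modules, which is the standard meaning of that decomposition and is covered by the reference the paper cites), and then a single monomial, $\shortform{1234}$, pins the two constants to $-6$ and $+6$; your sign bookkeeping for that coefficient checks out, as does the formal identity $\tau_2(\varphi)=\sum_{\alpha<\beta}\varphi_{\alpha\beta}^2$ for a skew-symmetric matrix with commuting ($2$-form) entries, which is also how the paper reads $\tau_2$ elsewhere. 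The trade-off: the paper's brute-force route produces the full expansion~\eqref{globalphi} as a by-product, while your argument is shorter, explains \emph{why} proportionality is forced rather than accidental, and reduces the verification to one coefficient. It is worth noting that your method is precisely the strategy the paper itself adopts later for Theorem~\ref{teo:main}: there $\tau_4(\psi)$ is declared proportional to $\spinform{9}$ by $\Spin{9}$-invariance, and the factor $360$ is fixed by evaluating the single term $\shortform{12345678}$. So your proof of the $\Spin{7}$ statement is, in effect, the natural rehearsal of the paper's own $\Spin{9}$ argument, applied one level down.
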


\begin{proof}
A computation shows that
\begin{equation}\label{globalphi}
\begin{split}
&\phantom{=-}{\phi'_i}^2 + {\phi''_i}^2 + {\phi'''_i}^2 +\dots+ {\phi'_h}^2 + {\phi''_h}^2 + {\phi'''_h}^2 \\
&=6(\shortform{1234}+\shortform{5678})-3(\shortform{15}+\shortform{26}+\shortform{37}+\shortform{48})^2
-6(\shortform{1278}-\shortform{1368}+\shortform{1467}+\shortform{2358}-\shortform{2457}+\shortform{3456}) \\
&=-\phi_{i}^2-\dots-\phi_{h}^2\enspace.
\end{split}
\end{equation}
The conclusion then follows by comparing~\eqref{globalphi} with~\eqref{Phi}.
\end{proof}

\begin{re}
We have listed a certain number of complex structures in $\RR^8$. Indeed, a comparison between the two decompositions $\Lambda^2\RR^8=\Lambda^2_{10} \oplus \Lambda^2_{18}$ (under $\Spin{5}$) and $\Lambda^2\RR^8=\Lambda^2_7 \oplus \Lambda^2_{21}$ (under $\Spin{7}$) can be made more precise in terms of the above mentioned generators of the components. The following identities hold between the ten $\Spin{5}$ K\"ahler forms $\theta_{\alpha\beta}$ and some of the $7$ and of the $21$ K\"ahler forms associated with $\Spin{7}$:
\[
\theta_{12} = \phi_i\enspace,\qquad\theta_{13}=\phi_j\enspace,\qquad\theta_{14}=\phi_k\enspace,\qquad\theta_{15}=\phi_e\enspace,\qquad
\]
and
\[
\theta_{23} =\varphi_{ij}\enspace,\qquad\theta_{24}=-\varphi_{ik}\enspace,\qquad\theta_{34}=-\varphi_{jk}\enspace,\qquad\theta_{25}=\varphi_{ie}\enspace,\qquad\theta_{35}=-\varphi_{je}\enspace,\qquad\theta_{45}=-\varphi_{ke}\enspace. 
\]


It follows that the remaining $2$-forms $\phi_\alpha$ and $\varphi_{\alpha \beta}$ can be chosen as generators of the component $\Lambda^2_{18}$, that contains the $\lieso{3}$ spanned by $\varphi_{fg}, \varphi_{fh}, \varphi_{gh}$.\hfill\qed
\end{re}

\begin{re}
By comparing the last two sections, it appears that the behaviour of the representations of $\Spin{5}$ and of $\Spin{7}$ on $\RR^8$ are quite different in terms of the associated almost complex structures. In particular, Corollary~\ref{noreal} states the impossibility of deducing the almost complex structures defined by $\Spin{7}$ from a set of involutions. As we will see in the next section, $\Spin{9}$ is much closer in this respect to $\Spin{5}$ than to $\Spin{7}$. However, most of the formulas written in the present section will be useful to obtain explicitly the K\"ahler forms associated with $\Spin{9}$.\hfill\qed
\end{re}

\section{The K\"ahler forms of a $\Spin{9}$-structure}\label{sec:spin9}

A basis of the standard $\Spin{9}$-structure on $\OO^2 \cong \RR^{16}$ can be written by looking at~\eqref{ha} and at the $9$ vectors $(0,1),(0,i),(0,j),\dots,(0,h),(1,0)\in S^8 \subset \RR\times\OO$. This gives the following symmetric endomorphisms:
\begin{equation}\label{eq:IO}
\begin{aligned}
\I_1&=\left(
\begin{array}{c|c}
0 & \Id \\ \hline
\Id & 0
\end{array}
\right)\enspace,\qquad &
\I_2&=\left(
\begin{array}{c|c}
0 & -\RO_i \\ \hline
\RO_i & 0
\end{array}
\right)\enspace,\qquad &
\I_3&=\left(
\begin{array}{c|c}
0 & -\RO_j \\ \hline
\RO_j & 0
\end{array}
\right)\enspace, \\
\I_4&=\left(
\begin{array}{c|c}
0 & -\RO_k \\ \hline
\RO_k & 0
\end{array}
\right)\enspace,\qquad &
\I_5&=\left(
\begin{array}{c|c}
0 & -\RO_e \\ \hline
\RO_e & 0
\end{array}
\right)\enspace,\qquad &
\I_6&=\left(
\begin{array}{c|c}
0 & -\RO_f\\ \hline
\RO_f & 0
\end{array}
\right)\enspace, \\
\I_7&=\left(
\begin{array}{c|c}
0 & -\RO_g \\ \hline
\RO_g & 0
\end{array}
\right)\enspace,\qquad &
\I_8&=\left(
\begin{array}{c|c}
0 & -\RO_h \\ \hline
\RO_h & 0
\end{array}
\right)\enspace,\qquad &
\I_9&=\left(
\begin{array}{c|c}
\Id & 0 \\ \hline
0 & -\Id
\end{array}
\right)\enspace.
\end{aligned}
\end{equation}

The space $\Lambda^2\RR^{16}$ of $2$-forms in $\RR^{16}$ decomposes under $\Spin{9}$ as
\[
\Lambda^2\RR^{16} = \Lambda^2_{36} \oplus \Lambda^2_{84}
\]
(cf.~\cite[page 146]{fr}), where  $\Lambda^2_{36} \cong \liespin{9}$ and $ \Lambda^2_{84}$ is an orthogonal complement in $\Lambda^2 \cong \lieso{16}$. Explicit generators of both subspaces can be written by looking at the $36$ compositions $\J_{\alpha \beta} \ug \I_\alpha \I_\beta$, for $\alpha <\beta$, and at the $84$ compositions $\J_{\alpha \beta \gamma} \ug \I_\alpha \I_\beta \I_\gamma$, for $\alpha <\beta<\gamma$, all complex structures on $\RR^{16}$. We write now explicitly the matrix forms for $\J_{\alpha\beta}$, and for convenience we split them in two families. The first $28$ complex structures are
\begin{footnotesize}
\begin{equation}\label{eq:J1}
\begin{aligned}
\J_{12}&=\left(
\begin{array}{c|c}
\RO_i & 0 \\ \hline
0 & -\RO_i
\end{array}
\right),\thinspace &
\J_{13}&=\left(
\begin{array}{c|c}
\RO_j & 0 \\ \hline
0 & -\RO_j
\end{array}
\right),\thinspace &
\J_{14}&=\left(
\begin{array}{c|c}
\RO_k & 0 \\ \hline
0 & -\RO_k
\end{array}
\right),\thinspace &
\J_{15}&=\left(
\begin{array}{c|c}
\RO_e & 0 \\ \hline
0 & -\RO_e
\end{array}
\right),
\\
\J_{16}&=\left(
\begin{array}{c|c}
\RO_f & 0 \\ \hline
0 & -\RO_f
\end{array}
\right),\thinspace &
\J_{17}&=\left(
\begin{array}{c|c}
\RO_g & 0 \\ \hline
0 & -\RO_g
\end{array}
\right),\thinspace &
\J_{18}&=\left(
\begin{array}{c|c}
\RO_h & 0 \\ \hline
0 & -\RO_h
\end{array}
\right),\thinspace &
\J_{23}&=\left(
\begin{array}{c|c}
-\RO_{ij} & 0 \\ \hline
0 & -\RO_{ij}
\end{array}
\right),
\\
\J_{24}&=\left(
\begin{array}{c|c}
-\RO_{ik} & 0 \\ \hline
0 & -\RO_{ik}
\end{array}
\right),\thinspace &
\J_{25}&=\left(
\begin{array}{c|c}
-\RO_{ie} & 0 \\ \hline
0 & -\RO_{ie}
\end{array}
\right),\thinspace &
\J_{26}&=\left(
\begin{array}{c|c}
-\RO_{if} & 0 \\ \hline
0 & -\RO_{if}
\end{array}
\right),,\thinspace &
\J_{27}&=\left(
\begin{array}{c|c}
-\RO_{ig} & 0 \\ \hline
0 & -\RO_{ig}
\end{array}
\right),
\\
\J_{28}&=\left(
\begin{array}{c|c}
-\RO_{ih} & 0 \\ \hline
0 & -\RO_{ih}
\end{array}
\right),\thinspace &
\J_{34}&=\left(
\begin{array}{c|c}
-\RO_{jk} & 0 \\ \hline
0 & -\RO_{jk}
\end{array}
\right),\thinspace &
\J_{35}&=\left(
\begin{array}{c|c}
-\RO_{je} & 0 \\ \hline
0 & -\RO_{je}
\end{array}
\right),\thinspace &
\J_{36}&=\left(
\begin{array}{c|c}
-\RO_{jf} & 0 \\ \hline
0 & -\RO_{jf}
\end{array}
\right),
\\
\J_{37}&=\left(
\begin{array}{c|c}
-\RO_{jg} & 0 \\ \hline
0 & -\RO_{jg}
\end{array}
\right),\thinspace &
\J_{38}&=\left(
\begin{array}{c|c}
-\RO_{jh} & 0 \\ \hline
0 & -\RO_{jh}
\end{array}
\right),\thinspace &
\J_{45}&=\left(
\begin{array}{c|c}
-\RO_{ke} & 0 \\ \hline
0 & -\RO_{ke}
\end{array}
\right),\thinspace &
\J_{46}&=\left(
\begin{array}{c|c}
-\RO_{kf} & 0 \\ \hline
0 & -\RO_{kf}
\end{array}
\right),
\\
\J_{47}&=\left(
\begin{array}{c|c}
-\RO_{kg} & 0 \\ \hline
0 & -\RO_{kg}
\end{array}
\right),\thinspace &
\J_{48}&=\left(
\begin{array}{c|c}
-\RO_{kh} & 0 \\ \hline
0 & -\RO_{kh}
\end{array}
\right),\thinspace &
\J_{56}&=\left(
\begin{array}{c|c}
-\RO_{ef} & 0 \\ \hline
0 & -\RO_{ef}
\end{array}
\right),\thinspace &
\J_{57}&=\left(
\begin{array}{c|c}
-\RO_{eg} & 0 \\ \hline
0 & -\RO_{eg}
\end{array}
\right),
\\
\J_{58}&=\left(
\begin{array}{c|c}
-\RO_{eh} & 0 \\ \hline
0 & -\RO_{eh}
\end{array}
\right),\thinspace &
\J_{67}&=\left(
\begin{array}{c|c}
-\RO_{fg} & 0 \\ \hline
0 & -\RO_{fg}
\end{array}
\right),\thinspace &
\J_{68}&=\left(
\begin{array}{c|c}
-\RO_{fh} & 0 \\ \hline
0 & -\RO_{eg}
\end{array}
\right),\thinspace &
\J_{78}&=\left(
\begin{array}{c|c}
-\RO_{gh} & 0 \\ \hline
0 & -\RO_{gh}
\end{array}
\right),
\end{aligned}
\end{equation}
\end{footnotesize}
and the remaining $8$ complex structures are
\begin{footnotesize}
\begin{equation}\label{eq:J2}
\begin{aligned}
\J_{19}&=\left(
\begin{array}{c|c}
0 & -\Id \\ \hline
\Id & 0
\end{array}
\right),\thinspace &
\J_{29}&=\left(
\begin{array}{c|c}
0 & \RO_i \\ \hline
\RO_i & 0
\end{array}
\right),\thinspace &
\J_{39}&=\left(
\begin{array}{c|c}
0 & \RO_j \\ \hline
\RO_j & 0
\end{array}
\right),\thinspace &
\J_{49}&=\left(
\begin{array}{c|c}
0 & \RO_k \\ \hline
\RO_k & 0
\end{array}
\right),\\
\J_{59}&=\left(
\begin{array}{c|c}
0 & \RO_e \\ \hline
\RO_e & 0
\end{array}
\right),\thinspace &
\J_{69}&=\left(
\begin{array}{c|c}
0 & \RO_f \\ \hline
\RO_f & 0
\end{array}
\right),\thinspace &
\J_{79}&=\left(
\begin{array}{c|c}
0 & \RO_g \\ \hline
\RO_g & 0
\end{array}
\right),\thinspace &
\J_{89}&=\left(
\begin{array}{c|c}
0 & \RO_h \\ \hline
\RO_h & 0
\end{array}
\right).
\end{aligned}
\end{equation}
\end{footnotesize}

We write now the associated K\"ahler $2$-forms $\psi_{\alpha \beta}$ of the complex structures $\J_{\alpha \beta}$, by denoting the coordinates in $\OO^2\cong\RR^{16}$ by $(1,\dots,8,1',\dots,8')$. Abusing a little the notation introduced in Section~\ref{preliminaries}, we can then write
\begin{equation}\label{28}
\begin{aligned}
\psi_{12}&=(-\shortform{12}+\shortform{34}+\shortform{56}-\shortform{78})-(\enspace)\shortform{'}\enspace,\qquad & \psi_{13}&=(-\shortform{13}-\shortform{24}+\shortform{57}+\shortform{68})-(\enspace)\shortform{'}\enspace,
\\
\psi_{14}&=(-\shortform{14}+\shortform{23}+\shortform{58}-\shortform{67})-(\enspace)\shortform{'}\enspace,\qquad & \psi_{15}&=(-\shortform{15}-\shortform{26}-\shortform{37}-\shortform{48})-(\enspace)\shortform{'}\enspace,
\\
\psi_{16}&=(-\shortform{16}+\shortform{25}-\shortform{38}+\shortform{47})-(\enspace)\shortform{'}\enspace,\qquad & \psi_{17}&=(-\shortform{17}+\shortform{28}+\shortform{35}-\shortform{46})-(\enspace)\shortform{'}\enspace,
\\
\psi_{18}&=(-\shortform{18}-\shortform{27}+\shortform{36}+\shortform{45})-(\enspace)\shortform{'}\enspace,\qquad & \psi_{23}&=(-\shortform{14}+\shortform{23}-\shortform{58}+\shortform{67})+(\enspace)\shortform{'}\enspace,
\\
\psi_{24}&=(\shortform{13}+\shortform{24}+\shortform{57}+\shortform{68})+(\enspace)\shortform{'}\enspace,\qquad & \psi_{25}&=(-\shortform{16}+\shortform{25}+\shortform{38}-\shortform{47})+(\enspace)\shortform{'}\enspace,
\\
\psi_{26}&=(\shortform{15}+\shortform{26}-\shortform{37}-\shortform{48})+(\enspace)\shortform{'}\enspace,\qquad &
\psi_{27}&=(\shortform{18}+\shortform{27}+\shortform{36}+\shortform{45})+(\enspace)\shortform{'}\enspace,
\\
\psi_{28}&=(-\shortform{17}+\shortform{28}-\shortform{35}+\shortform{46})+(\enspace)\shortform{'}\enspace,\qquad & \psi_{34}&=(-\shortform{12}+\shortform{34}-\shortform{56}+\shortform{78})+(\enspace)\shortform{'}\enspace,
\\
\psi_{35}&=(-\shortform{17}-\shortform{28}+\shortform{35}+\shortform{46})+(\enspace)\shortform{'}\enspace,\qquad & \psi_{36}&=(-\shortform{18}+\shortform{27}+\shortform{36}-\shortform{45})+(\enspace)\shortform{'}\enspace,
\\
\psi_{37}&=(+\shortform{15}-\shortform{26}+\shortform{37}-\shortform{48})+(\enspace)\shortform{'}\enspace,\qquad & \psi_{38}&=(\shortform{16}+\shortform{25}+\shortform{38}+\shortform{47})+(\enspace)\shortform{'}\enspace,
\\
\psi_{45}&=(-\shortform{18}+\shortform{27}-\shortform{36}+\shortform{45})+(\enspace)\shortform{'}\enspace,\qquad & \psi_{46}&=(\shortform{17}+\shortform{28}+\shortform{35}+\shortform{46})+(\enspace)\shortform{'}\enspace,
\\
\psi_{47}&=(-\shortform{16}-\shortform{25}+\shortform{38}+\shortform{47})+(\enspace)\shortform{'}\enspace,\qquad & \psi_{48}&=(\shortform{15}-\shortform{26}-\shortform{37}+\shortform{48})+(\enspace)\shortform{'}\enspace,
\\
\psi_{56}&=(-\shortform{12}-\shortform{34}+\shortform{56}+\shortform{78})+(\enspace)\shortform{'}\enspace,\qquad & \psi_{57}&=(-\shortform{13}+\shortform{24}+\shortform{57}-\shortform{68})+(\enspace)\shortform{'}\enspace,
\\
\psi_{58}&=(-\shortform{14}-\shortform{23}+\shortform{58}+\shortform{67})+(\enspace)\shortform{'}\enspace,\qquad & \psi_{67}&=(\shortform{14}+\shortform{23}+\shortform{58}+\shortform{67})+(\enspace)\shortform{'}\enspace,
\\
\psi_{68}&=(-\shortform{13}+\shortform{24}-\shortform{57}+\shortform{68})+(\enspace)\shortform{'}\enspace,\qquad &
\psi_{78}&=(\shortform{12}+\shortform{34}+\shortform{56}+\shortform{78})+(\enspace)\shortform{'}\enspace,
\end{aligned}
\end{equation}
where $(\enspace)\shortform{'}$ denotes the $\shortform{'}$ of what appears before it, for instance
\[
\psi_{12}=(-\shortform{12}+\shortform{34}+\shortform{56}-\shortform{78})-(-\shortform{1'2'}+\shortform{3'4'}+\shortform{5'6'}-\shortform{7'8'})\enspace.
\]

Moreover we have
\begin{equation}\label{8}
\begin{aligned}
\psi_{19}&=-\shortform{11'}-\shortform{22'}-\shortform{33'}-\shortform{44'}-\shortform{55'}-\shortform{66'}-\shortform{77'}-\shortform{88'}\enspace, &
\psi_{29}&=-\shortform{12'}+\shortform{21'}+\shortform{34'}-\shortform{43'}+\shortform{56'}-\shortform{65'}-\shortform{78'}+\shortform{87'}\enspace,
\\
\psi_{39}&=-\shortform{13'}-\shortform{24'}+\shortform{31'}+\shortform{42'}+\shortform{57'}+\shortform{68'}-\shortform{75'}-\shortform{86'}\enspace, &
\psi_{49}&=-\shortform{14'}+\shortform{23'}-\shortform{32'}+\shortform{41'}+\shortform{58'}-\shortform{67'}+\shortform{76'}-\shortform{85'}\enspace,
\\
\psi_{59}&=-\shortform{15'}-\shortform{26'}-\shortform{37'}-\shortform{48'}+\shortform{51'}+\shortform{62'}+\shortform{73'}+\shortform{84'}\enspace, &
\psi_{69}&=-\shortform{16'}+\shortform{25'}-\shortform{38'}+\shortform{47'}-\shortform{52'}+\shortform{61'}-\shortform{74'}+\shortform{83'}\enspace,
\\
\psi_{79}&=-\shortform{17'}+\shortform{28'}+\shortform{35'}-\shortform{46'}-\shortform{53'}+\shortform{64'}+\shortform{71'}-\shortform{82'}\enspace, &
\psi_{89}&=-\shortform{18'}-\shortform{27'}+\shortform{36'}+\shortform{45'}-\shortform{54'}-\shortform{63'}+\shortform{72'}+\shortform{81'}\enspace.\end{aligned}
\end{equation}

\begin{pr}\label{pr:charpoly}
Let $\psi$ be the matrix of K\"ahler forms $(\psi_{\alpha \beta})_{\alpha,\beta=1,\dots,9}$ given by Formulas~\eqref{28} and~\eqref{8}. Then its characteristic polynomial reduces to
\[
\det(tI-\psi)=t^9+\tau_4(\psi)t^5+\tau_8(\psi)t\enspace.
\]
\end{pr}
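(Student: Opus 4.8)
The plan is to exploit that $\psi=(\psi_{\alpha\beta})$ is a \emph{skew-symmetric} matrix whose entries are $2$-forms. First I would record that $\psi_{\alpha\beta}=-\psi_{\beta\alpha}$ and $\psi_{\alpha\alpha}=0$, and that the entries lie in the \emph{commutative} ring $\Lambda^{\mathrm{even}}\RR^{16}$, since even-degree forms commute under $\wedge$. The coefficients $\tau_k(\psi)$ of $\det(tI-\psi)$ are, up to sign, the sums of the $k\times k$ principal minors, and each principal submatrix of $\psi$ is again skew-symmetric. Over a commutative, $2$-torsion-free ring an odd-sized skew-symmetric matrix $A$ satisfies $\det A=\det(A^T)=\det(-A)=-\det A$, hence $\det A=0$. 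Thus every odd-order principal minor vanishes, giving $\tau_1=\tau_3=\tau_5=\tau_7=\tau_9=0$, and the characteristic polynomial already collapses to $t^9+\tau_2(\psi)t^7+\tau_4(\psi)t^5+\tau_6(\psi)t^3+\tau_8(\psi)t$. It then remains only to prove $\tau_2(\psi)=0$ and $\tau_6(\psi)=0$.

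For $\tau_2$, note that $\tau_2(\psi)=\sum_{\alpha<\beta}\psi_{\alpha\beta}^2=-\frac{1}{2}\tr{\psi^2}$, a $4$-form, and I would compute it by splitting $\RR^{16}=\OO\oplus\OO$ into its unprimed and primed copies of $\RR^8$. By \eqref{eq:J1} each $\psi_{\alpha\beta}$ with $\alpha<\beta\le 8$ is block-diagonal, so it is a sum $a_{\alpha\beta}+b_{\alpha\beta}$ of a $2$-form $a_{\alpha\beta}$ in the unprimed $\RR^8$ and a $2$-form $b_{\alpha\beta}=\pm a_{\alpha\beta}'$ in the primed one; the $28$ forms $a_{\alpha\beta}$ are, up to sign, exactly the seven $\phi_u$ and the twenty-one $\varphi_{uv}$ of Section~\ref{spin7}. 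Hence the purely unprimed part of $\sum_{\alpha<\beta\le8}\psi_{\alpha\beta}^2$ equals $\sum_u\phi_u^2+\sum_{u<v}\varphi_{uv}^2=(-6+6)\spinform7=0$ by Proposition~\ref{pr:spin7tau}, and the purely primed part vanishes identically. What survives is the mixed part, carrying two unprimed and two primed indices: namely $\sum_{\alpha<\beta\le8}2\,a_{\alpha\beta}\wedge b_{\alpha\beta}$ together with the $\sum_{\gamma=1}^{8}\psi_{\gamma9}^2$ coming from \eqref{8}, and I would check directly that these cancel.

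A cleaner alternative, which also dispatches $\tau_6$, is to observe that each $\tau_k(\psi)$ is a $\Spin{9}$-invariant $2k$-form on $\RR^{16}$: changing the local frame $\{\I_\alpha\}$ by an $\SO{9}$ matrix conjugates $\psi$ and so leaves the characteristic polynomial unchanged, while the combined $\Spin{9}$-action rotates the $\I_\alpha$ through the vector representation. Since the $\Spin{9}$-invariant exterior forms on $\Delta_9\cong\RR^{16}$ occur only in degrees $0,8,16$ (they reproduce $H^*(\OP{2})$, by Cartan's theorem for the symmetric space $\OP{2}=\mathrm{F}_4/\Spin{9}$), the invariant forms $\tau_2(\psi)$ and $\tau_6(\psi)$, of degrees $4$ and $12$, must both vanish; for $\tau_6$ one may equally note that $\star$ carries invariant $4$-forms isomorphically onto invariant $12$-forms. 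The main obstacle is precisely this vanishing of $\tau_2$ and $\tau_6$: in the hands-on approach it is the cancellation of the mixed cross-terms above (and the analogous, far lengthier degree-$12$ bookkeeping for $\tau_6$), whereas in the representation-theoretic approach it is the external input that $\Spin{9}$ admits no invariant form in degree $4$.
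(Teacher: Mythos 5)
Your proposal is correct, and its decisive branch --- the representation-theoretic one --- is a genuinely different route from the paper's. The paper handles the odd coefficients exactly as you do (skew-symmetry of $\psi$ over the commutative ring of even forms), then disposes of $\tau_6$ by asserting it is the Hodge star of $\tau_2$, and finally kills $\tau_2$ by brute force: the $28$ squares $\psi_{\alpha\beta}^2$ with $\alpha<\beta\le 8$ are organized into the seven families of~\eqref{7sums}, whose purely unprimed and purely primed terms cancel family by family, and the surviving mixed terms are matched one-by-one against $-\tfrac12\sum_{\gamma}\psi_{\gamma 9}^2$. Your first, hands-on branch is essentially this same computation: invoking Proposition~\ref{pr:spin7tau} to cancel the unprimed and primed contributions globally is a nice structural shortcut (the paper obtains the same cancellation implicitly, family by family), but the step you defer with ``I would check directly that these cancel'' \emph{is} the paper's lengthy bookkeeping in~\eqref{7sums}, so on its own that branch is a plan rather than a proof. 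Your second branch, however, is complete and different: $\tau_k(\psi)$ is $\Spin{9}$-invariant, because the characteristic polynomial is conjugation-invariant and the equivariance of Clifford multiplication gives $g\,\I_\alpha\,g^{-1}=\sum_\beta\rho(g)_{\beta\alpha}\I_\beta$, hence $g^*\psi=\rho(g)\,\psi\,\rho(g)^{-1}$; combined with the classical fact (Cartan's theorem for $\OP{2}=\mathrm{F}_4/\Spin{9}$, equivalently the Brown--Gray description of invariant tensors, which the paper itself cites) that $\Spin{9}$-invariant forms on $\RR^{16}$ occur only in degrees $0,8,16$, this kills $\tau_2$ and $\tau_6$ simultaneously, with no computation and without needing the unproved identity $\tau_6=\star\tau_2$. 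What you give up is self-containedness: the invariant-form classification is external input, whereas the paper's computation is elementary and, as a by-product, produces the explicit mixed-term identities that feed into the later evaluation of $\tau_4(\psi)$. Note also that your method is very much in the spirit of the paper: the proof of Theorem~\ref{teo:main} uses precisely this invariance reasoning (an invariant $8$-form, if nonzero, must be proportional to $\spinform{9}$), so your argument amounts to applying the paper's own later technique one section earlier.
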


\begin{proof}
The coefficients $\tau_{2k-1}$ of $\det(tI-\psi)$ are zero, since $\psi$ is a $9\times 9$ skew-symmetric matrix. Thus, it remains to check that $\tau_2=\tau_6=0$. Since $\tau_6(\psi)$ is the Hodge star of $\tau_2(\psi)$ in $\RR^{16}$, we are only left to show the vanishing of $\tau_2(\psi)$.

Observe that the K\"ahler forms in~\eqref{28} can be arranged in the following seven families:
\begin{equation}
\begin{split}
\psi_{12},\psi_{34},\psi_{56}, \psi_{78}&=[\pm(\shortform{12})\pm(\shortform{34})\pm(\shortform{56})\pm(\shortform{78})]\pm[\pm(\shortform{12})\shortform{'}\pm(\shortform{34})\shortform{'}\pm(\shortform{56})\shortform{'}\pm(\shortform{78})\shortform{'}]\enspace,
\\
\psi_{13},\psi_{24},\psi_{57},\psi_{68}&=[\pm(\shortform{13})\pm(\shortform{24})\pm(\shortform{57})\pm(\shortform{68})]\pm[\pm(\shortform{13})\shortform{'}\pm(\shortform{24})\shortform{'}\pm(\shortform{57})\shortform{'}\pm(\shortform{68})\shortform{'}]\enspace,
\\
\psi_{14},\psi_{23},\psi_{58},\psi_{67}&=[\pm(\shortform{14})\pm(\shortform{23})\pm(\shortform{58})\pm(\shortform{67})]\pm[\pm(\shortform{14})\shortform{'}\pm(\shortform{23})\shortform{'}\pm(\shortform{58})\shortform{'}\pm(\shortform{67})\shortform{'}]\enspace,
\\
\psi_{15},\psi_{26},\psi_{37},\psi_{48}&=[\pm(\shortform{15})\pm(\shortform{26})\pm(\shortform{37})\pm(\shortform{48})]\pm[\pm(\shortform{15})\shortform{'}\pm(\shortform{26})\shortform{'}\pm(\shortform{37})\shortform{'}\pm(\shortform{48})\shortform{'}]\enspace,
\\
\psi_{16},\psi_{25},\psi_{38},\psi_{47}&=[\pm(\shortform{16})\pm(\shortform{25})\pm(\shortform{38})\pm(\shortform{47})]\pm[\pm(\shortform{16})\shortform{'}\pm(\shortform{25})\shortform{'}\pm(\shortform{38})\shortform{'}\pm(\shortform{47})\shortform{'}]\enspace,
\\
\psi_{17},\psi_{28},\psi_{35},\psi_{46}&=[\pm(\shortform{17})\pm(\shortform{28})\pm(\shortform{35})\pm(\shortform{46})]\pm[\pm(\shortform{17})\shortform{'}\pm(\shortform{28})\shortform{'}\pm(\shortform{35})\shortform{'}\pm(\shortform{46})\shortform{'}]\enspace,
\\
\psi_{18},\psi_{27},\psi_{36},\psi_{45}&=[\pm(\shortform{18})\pm(\shortform{27})\pm(\shortform{36})\pm(\shortform{45})]\pm[\pm(\shortform{18})\shortform{'}\pm(\shortform{27})\shortform{'}\pm(\shortform{36})\shortform{'}\pm(\shortform{45})\shortform{'}]\enspace,
\end{split}
\end{equation}
and note that the four $\psi_{\alpha \beta}$ in each line indicate precisely the four pairs $(\shortform{\alpha\beta})$ appearing in their expression. Note also that in each line the signs inside brackets follow (up to a global change) the four patterns
\begin{equation}
\begin{split}
-&(\phantom{\shortform{12}})+(\phantom{\shortform{12}})+(\phantom{\shortform{12}})-(\phantom{\shortform{12}})\enspace,\qquad -(\phantom{\shortform{12}})+(\phantom{\shortform{12}})-(\phantom{\shortform{12}})+(\phantom{\shortform{12}})\enspace, 
\\
-&(\phantom{\shortform{12}})-(\phantom{\shortform{12}})+(\phantom{\shortform{12}})+(\phantom{\shortform{12}})\enspace,\qquad +(\phantom{\shortform{12}})+(\phantom{\shortform{12}})+(\phantom{\shortform{12}})+(\phantom{\shortform{12}})\enspace,
\end{split}
\end{equation}
that is, an even number of $+$ and $-$ signs. Finally, observe that in all cases (and again up to a global change for forms of type $\psi_{1 \beta}$, for $\beta=2,\dots,8$) the same pattern appears both in the terms with coordinates $(1,\dots,8)$ and in those with coordinates $(1',\dots,8')$. 

It follows
\begin{equation}\label{7sums}
\begin{split}
\frac{1}{4}(\psi^2_{12}+\psi^2_{34}+\psi^2_{56}+\psi^2_{78})&=\underline{\shortform{121'2'}}+\shortform{123'4'}+\shortform{125'6'}-\shortform{127'8'}+\shortform{341'2'}+\underline{\shortform{343'4'}}-\shortform{345'6'}+\shortform{347'8'}
\\
&+\shortform{561'2'}-\shortform{563'4'}+\underline{\shortform{565'6'}}+\shortform{567'8'}-\shortform{781'2'}+\shortform{783'4'}+\shortform{785'6'}+\underline{\shortform{787'8'}}\enspace,
\\
\frac{1}{4}(\psi^2_{13}+\psi^2_{24}+\psi^2_{57}+\psi^2_{68})&=\underline{\shortform{131'3'}}-\shortform{132'4'}+\shortform{135'7'}+\shortform{136'8'}-\shortform{241'3'}+\underline{\shortform{242'4'}}+\shortform{245'7'}+\shortform{246'8'}
\\
&+\shortform{571'3'}+\shortform{572'4'}+\underline{\shortform{575'7'}}-\shortform{576'8'}+\shortform{681'3'}+\shortform{682'4'}-\shortform{685'7'}+\underline{\shortform{686'8'}}\enspace,
\\
\frac{1}{4}(\psi^2_{14}+\psi^2_{23}+\psi^2_{58}+\psi^2_{67})&=\underline{\shortform{141'4'}}+\shortform{142'3'}+\shortform{145'8'}-\shortform{146'7'}+\shortform{231'4'}+\underline{\shortform{232'3'}}-\shortform{235'8'}+\shortform{236'7'}
\\
&+\shortform{581'4'}-\shortform{582'3'}+\underline{\shortform{585'8'}}+\shortform{586'7'}-\shortform{671'4'}+\shortform{672'3'}+\shortform{675'8'}+\underline{\shortform{676'7'}}\enspace,
\\
\frac{1}{4}(\psi^2_{15}+\psi^2_{26}+\psi^2_{37}+\psi^2_{48})&=\underline{\shortform{151'5'}}-\shortform{152'6'}-\shortform{153'7'}-\shortform{154'8'}-\shortform{261'5'}+\underline{\shortform{262'6'}}-\shortform{263'7'}-\shortform{264'8'}
\\
&-\shortform{371'5'}-\shortform{372'6'}+\underline{\shortform{373'7'}}-\shortform{374'8'}-\shortform{481'5'}-\shortform{482'6'}-\shortform{483'7'}+\underline{\shortform{484'8'}}\enspace,
\\
\frac{1}{4}(\psi^2_{16}+\psi^2_{25}+\psi^2_{38}+\psi^2_{47})&=\underline{\shortform{161'6'}}+\shortform{162'5'}-\shortform{163'8'}+\shortform{164'7'}+\shortform{251'6'}+\underline{\shortform{252'5'}}+\shortform{253'8'}-\shortform{254'7'}
\\
&-\shortform{381'6'}+\shortform{382'5'}+\underline{\shortform{383'8'}}+\shortform{384'7'}+\shortform{471'6'}-\shortform{472'5'}+473'8+\underline{\shortform{474'7'}}\enspace,
\\
\frac{1}{4}(\psi^2_{17}+\psi^2_{28}+\psi^2_{35}+\psi^2_{46})&=\underline{\shortform{171'7'}}+\shortform{172'8'}+\shortform{173'5'}-\shortform{174'6'}+\shortform{281'7'}+\underline{\shortform{282'8'}}-\shortform{283'5'}+\shortform{284'6'}
\\
&+\shortform{351'7'}-\shortform{352'8'}+\underline{\shortform{353'5'}}+\shortform{354'6'}-\shortform{461'7'}+\shortform{462'8'}+\shortform{463'5'}+\underline{\shortform{464'6'}}\enspace,
\\
\frac{1}{4}(\psi^2_{18}+\psi^2_{27}+\psi^2_{36}+\psi^2_{45})&=\underline{\shortform{181'8'}}-\shortform{182'7'}+\shortform{183'6'}+\shortform{184'5'}-\shortform{271'8'}+\underline{\shortform{272'7'}}+\shortform{273'6'}+\shortform{274'5'}
\\
&+\shortform{361'8'}+\shortform{362'7'}+\underline{\shortform{363'6'}}-\shortform{364'5'}+\shortform{451'8'}+\shortform{452'7'}-\shortform{453'6'}+\underline{\shortform{454'5'}}\enspace,
\end{split}
\end{equation}
where the sum of the underlined terms is equal to $-\frac{1}{2}\psi^2_{19}$. On the other hand the computation of
\begin{equation}
\frac{1}{2}(\psi^2_{29}+\psi^2_{39}+\psi^2_{49}+\psi^2_{59}+\psi^2_{69}+\psi^2_{79}+\psi^2_{89}) 
\end{equation}
through formulas~\eqref{8} yields a sum of $28 \times 7 = 196$ terms. Among them, the $28$ involving repeated coordinates (like $-\shortform{12'21'}$ or $-\shortform{34'43'}$) reproduce the negative of the underlined terms in~\ref{7sums} (or equivalently the terms of $-\frac{1}{2}\psi^2_{19}$). Moreover, the remaining $196-28= 168$ terms are in $2$ to $1$ correspondence with the negative of the $84$ non-underlined terms in~\ref{7sums}. In other words, we have
\begin{equation*}
\frac{1}{4}(\sum_{\alpha<\beta=2}^{8}\psi^2_{\alpha\beta}+\psi^2_{19}+\sum_{\alpha=2}^{8}\psi^2_{\alpha 9})=0\enspace,
\end{equation*}
which gives (cf. ~\cite{cgm} as well as the discussion in~\cite{br-gr} concerning invariant tensors of the $16$-dimensional representation of $\Spin{9}$):
\begin{equation}
\tau_2(\psi)=\sum_{\alpha<\beta=2}^{9}\psi^2_{\alpha\beta}=0\enspace.
\end{equation}
\end{proof}

Since $\tau_8(\psi)$ is a $16$-form, it is proportional to the volume form of $\RR^{16}$. Sections~\ref{sec:forma_spin9} and ~\ref{sec:charpoly} will be devoted to the computation of $\tau_4(\psi)$.

\section{The $8$-form of a $\Spin{9}$-structure}\label{sec:forma_spin9}

Recall (cf.~\cite[page 13]{be} as well as~\cite[pages 168--170]{co}) that a $\Spin{9}$-invariant $8$-form $\spinform{9}$ in $\OO^2$ can be defined through the projections $p_l$ from $\OO^2$ onto its octonionic lines $l$. If $\nu_l$ is the volume form on each line $l$, then
\begin{equation}\label{be}
\spinform{9}\ug c\int_{\OP{1}}p_l^*\nu_l\,dl\enspace,
\end{equation}
for some constant $c$. The $\Spin{9}$-invariance of $\spinform{9}$ is a consequence of the $\Spin{9}$ action on the octonionic lines $l$ and of the $\Spin{9}$-invariance of the measure $dl$ on $\OP{1}$.

The above definition of $\spinform{9}$, forerunning the point of view of calibrations, parallels that of the K\"ahler $2$-form in $\CC^n$ and the quaternionic $4$-form in $\HH^n$ as the integrals $\int_{\CP{n-1}}p_l^*\nu_l\,dl$ and $\int_{\HP{n-1}}p_l^*\nu_l\,dl$ respectively. 

In this Section, we will use Formula~\eqref{be} to explicitly compute $\spinform{9}\in\Lambda^8\RR^{16}$, and we will give a formula analogous to Formula~\eqref{Phi}. To this aim, it is convenient to look first at the corresponding approaches for the complex and quaternionic cases.

\subsection*{The K\"ahler form in $\CC^2$}

Let $l\ug l_m=\{(z, mz)\st z\in\CC\}$ be a complex line in $\CC^2$, where $m=m_1+im_2\in\CC$. Denote by $p_l:\CC^2-0\rightarrow l_m$ the projection, and by $\nu_l$ the volume form on $l_m\subset\RR^4$. Writing the generators $(1,m)$ and $(i,mi)$ of $l_m$ in real coordinates, and using again the notation $\{\shortform{1},\dots,\shortform{4}\}$ for the standard basis of $\Lambda^1\RR^4$, we obtain the following coframe $\alpha_1$, $\alpha_2$ in $l_m\subset\RR^4$:
\[
\begin{split}
\alpha_1&=\shortform{1}+m_1 \shortform{3}+m_2 \shortform{4}\enspace,\\
\alpha_2&=\shortform{2}-m_2\shortform{3}+m_1\shortform{4}\enspace.
\end{split}
\]
Thus, we have
\[
\begin{split}
\kahlerform\ug c\int_{\CP{1}}p_{l}^*\nu_{l}\,dl&=c\int_{\RR^2}\frac{\alpha_1\wedge\alpha_2}{1+m_1^2+m_2^2}\,dm_1\wedge dm_2 \\
&=c\int_{\RR^2}\frac{(\shortform{1}+m_1\shortform{3}+m_2\shortform{4})\wedge(\shortform{2}-m_2\shortform{3}+m_1\shortform{4})}{1+m_1^2+m_2^2}\,dm_1\wedge dm_2\enspace,
\end{split}
\]
and using polar coordinates $m=\rho e^{i\theta}$, we have $dm_1\wedge dm_2=\rho/(1+\rho^2)^2 d\rho\wedge d\theta$ and
\[
\kahlerform=c\int_0^\infty\int_0^{2\pi}\frac{(\shortform{1}+\rho\cos\theta\shortform{3}+\rho\sin\theta\shortform{4})\wedge(\shortform{2}-\rho\sin\theta\shortform{3}+\rho\cos\theta\shortform{4})\rho}{(1+\rho^2)^3}\,d\rho\wedge d\theta\enspace.
\]
The $2$-form $\kahlerform$ is then described by $\binom{4}{2}$ integrals, and a computation shows that the only non-zero coefficients are that of $\shortform{12}$ and $\shortform{34}$:
\[
\int_0^\infty\int_0^{2\pi}\rho/(1+\rho^2)^3\,d\rho\wedge d\theta=\pi/2=\int_0^\infty\int_0^{2\pi}\rho^3/(1+\rho^2)^3\,d\rho\wedge d\theta.
\]

Thus, for a ad-hoc choice of the constant $c$, we obtain
\begin{equation}\label{integralformC}
\kahlerform=(2/\pi)\int_{\CP{1}}p_l^*\nu_l\,dl=\shortform{12}+\shortform{34}=\text{K\"ahler form in $\CC^2$}\enspace.
\end{equation}

\subsection*{The quaternion-K\"ahler form in $\HH^2$}

Following the complex case, in the quaternionic case we write $l\ug l_m =\{(h,mh)\st h\in\HH\}$, where $m=m_1+im_2+jm_3+km_4\in\HH$. Denote again by $p_l:\HH^2-0\rightarrow l_m$ the projection, by $\nu_l$ the volume form on $l_m\subset\RR^8$ and by $\{\shortform{1},\dots,\shortform{8}\}$ the standard real coframe of $\RR^8$. The coframe $\{\alpha_1,\dots,\alpha_4\}$, dual to $\{(1,m),(i,mi),(j,mj),(k,mk)\}$ in $l_m\subset\RR^8$, is then given by
\[
\begin{split}
\alpha_1&=\shortform{1}+m_1 \shortform{5}+m_2 \shortform{6}+m_3 \shortform{7}+m_4 \shortform{8}\enspace, \\ 
\alpha_2&=\shortform{2}-m_2 \shortform{5}+m_1 \shortform{6}+m_4 \shortform{7}-m_3 \shortform{8}\enspace, \\
\alpha_3&=\shortform{3}-m_3 \shortform{5}-m_4 \shortform{6}+m_1 \shortform{7}+m_2 \shortform{8}\enspace, \\
\alpha_4&=\shortform{4}-m_4 \shortform{5}+m_3 \shortform{6}-m_2 \shortform{7}+m_1 \shortform{8}\enspace,
\end{split}
\]
and the integral $4$-form is
\begin{equation}\label{quaternionkahlerform}
\quaternionform\ug c\int_{\HP{1}}p_{l}^*\nu_{l}\,dl=\int_{\RR^4}\frac{\alpha_1\wedge\dots\wedge\alpha_4}{(1+m_1^2+m_2^2+m_3^2+m_4^2)^2}\,dm_1\wedge\dots\wedge dm_4\enspace.
\end{equation}

Again, the computation of these $\binom{8}{4}$ integrals can be done in polar coordinates. We used \Mathematica\ for this computation, obtaining
\[
\begin{split}
\quaternionform=c\left(\frac{\pi^2}{20}\right.&\shortform{1234}-\frac{\pi^2}{60}\shortform{1256}+\frac{\pi^2}{60}\shortform{1278}-\frac{\pi^2}{60}\shortform{1357}-\frac{\pi^2}{60}\shortform{1368}-\frac{\pi^2}{60}\shortform{1458}+\frac{\pi^2}{60}\shortform{1467}\\
+\frac{\pi^2}{60}&\shortform{2358}-\frac{\pi^2}{60}\shortform{2367}-\frac{\pi^2}{60}\shortform{2457}-\frac{\pi^2}{60}\shortform{2468}+\frac{\pi^2}{60}\shortform{3456}-\frac{\pi^2}{60}\shortform{3478}+\left.\frac{\pi^2}{20}\shortform{5678}\right)\enspace,
\end{split}
\]
and a comparison with the right quaternionic $4$-form $\omega_{\RH_i}^2+\omega_{\RH_j}^2+\omega_{\RH_k}^2$ (see also Proposition~\ref{pr:quaternion}) leads to
\begin{equation}\label{integralformH}
\quaternionform=(-120/\pi^2)\int_{\HP{1}}p_l^*\nu_l\,dl=\omega_{\RH_i}^2+\omega_{\RH_j}^2+\omega_{\RH_k}^2=\text{Right quaternion-K\"ahler form in $\HH^2$}\enspace.
\end{equation}

\subsection*{The $\Spin{9}$-form in $\OO^2$}

In the octonionic case we write $l\ug l_m =\{(x,mx)\st x\in\OO\}$, where $m=m_1+im_2+jm_3+km_4+em_5+fm_6+gm_7+hm_8\in\OO$. The projection is $p_l:\OO^2-0\rightarrow l_m$, and $\nu_l$ is the volume form on $l_m\subset\RR^{16}$. For the sake of notation, it is now convenient to split $\RR^{16}$ as two copies of $\RR^8$, so to denote by $\{\shortform{1},\dots,\shortform{8},\shortform{1'},\dots,\shortform{8'}\}$ the standard real coframe of $\RR^{16}$. We have already introduced this notation in Formula~\eqref{28}.

In the same way as before, we obtain the coframe $\{\alpha_1,\dots,\alpha_8\}$ dual to $\{(1,m),\dots,(h,mh)\}$ in $l_m\subset\RR^{16}$, and the integral $8$-form~\eqref{be} is
\begin{equation}\label{spin9form}
\spinform{9}=c\int_{\OP{1}}p_{l}^*\nu_{l}\,dl=\int_{\RR^8}\frac{\alpha_1\wedge\dots\wedge\alpha_8}{(1+m_1^2+\dots+m_8^2)^2}\,dm_1\wedge\dots\wedge dm_8\enspace.
\end{equation}

A \Mathematica\ computation in polar coordinates of these $\binom{16}{8}$ integrals leads then to the explicit formula, given in Table~\ref{explicitspin9}. Remark that since $\spinform{9}$ is self-dual, Table~\ref{explicitspin9} lists only half of its monomials, the remaining ones being their Hodge stars. Thus the content of Table~\ref{explicitspin9} should be looked at as the analogue to formula~\eqref{Phi} for the $4$-form $\spinform{7}$.

Recall also that the entries of Table~\ref{explicitspin9} have been computed, according to the theorem in the Introduction, in such a way that the coefficients of $\spinform{9}$ be integers with $\gcd=1$. Thus, with this constraint on the constant $c$, we have
\begin{equation}\label{eq:implicitspin9}
\spinform{9}=\frac{110880}{\pi^4}\int_{\OP{1}}p_{l}^*\nu_{l}\,dl\enspace.
\end{equation}



\begin{table}[H]
\centering
\resizebox*{0.9\textwidth}{!}{
\begin{tabular}{||rrr||rrr||rrr||rrr||rrr||}
\hline
\shortform{12345678} & & -14 &  \shortform{123456} & \shortform{1'2'} & 2 &  \shortform{123456} & \shortform{3'4'} & -2 &  \shortform{123456} & \shortform{5'6'} & -2 &  \shortform{123456} & \shortform{7'8'} & -2 \\ 
\shortform{123457} & \shortform{1'3'} & 2 &  \shortform{123457} & \shortform{2'4'} & 2 &  \shortform{123457} & \shortform{5'7'} & -2 &  \shortform{123457} & \shortform{6'8'} & 2 &  \shortform{123458} & \shortform{1'4'} & 2 \\ 
\shortform{123458} & \shortform{2'3'} & -2 &  \shortform{123458} & \shortform{5'8'} & -2 &  \shortform{123458} & \shortform{6'7'} & -2 &  \shortform{123467} & \shortform{1'4'} & -2 &  \shortform{123467} & \shortform{2'3'} & 2 \\ 
\shortform{123467} & \shortform{5'8'} & -2 &  \shortform{123467} & \shortform{6'7'} & -2 &  \shortform{123468} & \shortform{1'3'} & 2 &  \shortform{123468} & \shortform{2'4'} & 2 &  \shortform{123468} & \shortform{5'7'} & 2 \\ 
\shortform{123468} & \shortform{6'8'} & -2 &  \shortform{123478} & \shortform{1'2'} & -2 &  \shortform{123478} & \shortform{3'4'} & 2 &  \shortform{123478} & \shortform{5'6'} & -2 &  \shortform{123478} & \shortform{7'8'} & -2 \\ 
\shortform{1234} & \shortform{1'2'3'4'} & -2 &  \shortform{1234} & \shortform{5'6'7'8'} & -2 &  \shortform{123567} & \shortform{1'5'} & -2 &  \shortform{123567} & \shortform{2'6'} & -2 &  \shortform{123567} & \shortform{3'7'} & -2 \\ 
\shortform{123567} & \shortform{4'8'} & 2 &  \shortform{123568} & \shortform{1'6'} & -2 &  \shortform{123568} & \shortform{2'5'} & 2 &  \shortform{123568} & \shortform{3'8'} & -2 &  \shortform{123568} & \shortform{4'7'} & -2 \\ 
\shortform{123578} & \shortform{1'7'} & -2 &  \shortform{123578} & \shortform{2'8'} & 2 &  \shortform{123578} & \shortform{3'5'} & 2 &  \shortform{123578} & \shortform{4'6'} & 2 &  \shortform{1235} & \shortform{1'2'3'5'} & -1 \\ 
\shortform{1235} & \shortform{1'2'4'6'} & -1 &  \shortform{1235} & \shortform{1'3'4'7'} & -1 &  \shortform{1235} & \shortform{1'5'6'7'} & -1 &  \shortform{1235} & \shortform{2'3'4'8'} & 1 &  \shortform{1235} & \shortform{2'5'6'8'} & 1 \\ 
\shortform{1235} & \shortform{3'5'7'8'} & 1 &  \shortform{1235} & \shortform{4'6'7'8'} & 1 &  \shortform{123678} & \shortform{1'8'} & -2 &  \shortform{123678} & \shortform{2'7'} & -2 &  \shortform{123678} & \shortform{3'6'} & 2 \\ 
\shortform{123678} & \shortform{4'5'} & -2 &  \shortform{1236} & \shortform{1'2'3'6'} & -1 &  \shortform{1236} & \shortform{1'2'4'5'} & 1 &  \shortform{1236} & \shortform{1'3'4'8'} & -1 &  \shortform{1236} & \shortform{1'5'6'8'} & -1 \\ 
\shortform{1236} & \shortform{2'3'4'7'} & -1 &  \shortform{1236} & \shortform{2'5'6'7'} & -1 &  \shortform{1236} & \shortform{3'6'7'8'} & 1 &  \shortform{1236} & \shortform{4'5'7'8'} & -1 &  \shortform{1237} & \shortform{1'2'3'7'} & -1 \\ 
\shortform{1237} & \shortform{1'2'4'8'} & 1 &  \shortform{1237} & \shortform{1'3'4'5'} & 1 &  \shortform{1237} & \shortform{1'5'7'8'} & -1 &  \shortform{1237} & \shortform{2'3'4'6'} & 1 &  \shortform{1237} & \shortform{2'6'7'8'} & -1 \\ 
\shortform{1237} & \shortform{3'5'6'7'} & -1 &  \shortform{1237} & \shortform{4'5'6'8'} & 1 &  \shortform{1238} & \shortform{1'2'3'8'} & -1 &  \shortform{1238} & \shortform{1'2'4'7'} & -1 &  \shortform{1238} & \shortform{1'3'4'6'} & 1 \\ 
\shortform{1238} & \shortform{1'6'7'8'} & -1 &  \shortform{1238} & \shortform{2'3'4'5'} & -1 &  \shortform{1238} & \shortform{2'5'7'8'} & 1 &  \shortform{1238} & \shortform{3'5'6'8'} & -1 &  \shortform{1238} & \shortform{4'5'6'7'} & -1 \\ 
\shortform{124567} & \shortform{1'6'} & 2 &  \shortform{124567} & \shortform{2'5'} & -2 &  \shortform{124567} & \shortform{3'8'} & -2 &  \shortform{124567} & \shortform{4'7'} & -2 &  \shortform{124568} & \shortform{1'5'} & -2 \\ 
\shortform{124568} & \shortform{2'6'} & -2 &  \shortform{124568} & \shortform{3'7'} & 2 &  \shortform{124568} & \shortform{4'8'} & -2 &  \shortform{124578} & \shortform{1'8'} & -2 &  \shortform{124578} & \shortform{2'7'} & -2 \\ 
\shortform{124578} & \shortform{3'6'} & -2 &  \shortform{124578} & \shortform{4'5'} & 2 &  \shortform{1245} & \shortform{1'2'3'6'} & 1 &  \shortform{1245} & \shortform{1'2'4'5'} & -1 &  \shortform{1245} & \shortform{1'3'4'8'} & -1 \\ 
\shortform{1245} & \shortform{1'5'6'8'} & -1 &  \shortform{1245} & \shortform{2'3'4'7'} & -1 &  \shortform{1245} & \shortform{2'5'6'7'} & -1 &  \shortform{1245} & \shortform{3'6'7'8'} & -1 &  \shortform{1245} & \shortform{4'5'7'8'} & 1 \\ 
\shortform{124678} & \shortform{1'7'} & 2 &  \shortform{124678} & \shortform{2'8'} & -2 &  \shortform{124678} & \shortform{3'5'} & 2 &  \shortform{124678} & \shortform{4'6'} & 2 &  \shortform{1246} & \shortform{1'2'3'5'} & -1 \\ 
\shortform{1246} & \shortform{1'2'4'6'} & -1 &  \shortform{1246} & \shortform{1'3'4'7'} & 1 &  \shortform{1246} & \shortform{1'5'6'7'} & 1 &  \shortform{1246} & \shortform{2'3'4'8'} & -1 &  \shortform{1246} & \shortform{2'5'6'8'} & -1 \\ 
\shortform{1246} & \shortform{3'5'7'8'} & 1 &  \shortform{1246} & \shortform{4'6'7'8'} & 1 &  \shortform{1247} & \shortform{1'2'3'8'} & -1 &  \shortform{1247} & \shortform{1'2'4'7'} & -1 &  \shortform{1247} & \shortform{1'3'4'6'} & -1 \\ 
\shortform{1247} & \shortform{1'6'7'8'} & 1 &  \shortform{1247} & \shortform{2'3'4'5'} & 1 &  \shortform{1247} & \shortform{2'5'7'8'} & -1 &  \shortform{1247} & \shortform{3'5'6'8'} & -1 &  \shortform{1247} & \shortform{4'5'6'7'} & -1 \\ 
\shortform{1248} & \shortform{1'2'3'7'} & 1 &  \shortform{1248} & \shortform{1'2'4'8'} & -1 &  \shortform{1248} & \shortform{1'3'4'5'} & 1 &  \shortform{1248} & \shortform{1'5'7'8'} & -1 &  \shortform{1248} & \shortform{2'3'4'6'} & 1 \\ 
\shortform{1248} & \shortform{2'6'7'8'} & -1 &  \shortform{1248} & \shortform{3'5'6'7'} & 1 &  \shortform{1248} & \shortform{4'5'6'8'} & -1 &  \shortform{125678} & \shortform{1'2'} & -2 &  \shortform{125678} & \shortform{3'4'} & -2 \\ 
\shortform{125678} & \shortform{5'6'} & 2 &  \shortform{125678} & \shortform{7'8'} & -2 &  \shortform{1256} & \shortform{1'2'5'6'} & -2 &  \shortform{1256} & \shortform{3'4'7'8'} & -2 &  \shortform{1257} & \shortform{1'2'5'7'} & -1 \\ 
\shortform{1257} & \shortform{1'2'6'8'} & 1 &  \shortform{1257} & \shortform{1'3'5'6'} & -1 &  \shortform{1257} & \shortform{1'3'7'8'} & 1 &  \shortform{1257} & \shortform{2'4'5'6'} & -1 &  \shortform{1257} & \shortform{2'4'7'8'} & 1 \\ 
\shortform{1257} & \shortform{3'4'5'7'} & -1 &  \shortform{1257} & \shortform{3'4'6'8'} & 1 &  \shortform{1258} & \shortform{1'2'5'8'} & -1 &  \shortform{1258} & \shortform{1'2'6'7'} & -1 &  \shortform{1258} & \shortform{1'4'5'6'} & -1 \\ 
\shortform{1258} & \shortform{1'4'7'8'} & 1 &  \shortform{1258} & \shortform{2'3'5'6'} & 1 &  \shortform{1258} & \shortform{2'3'7'8'} & -1 &  \shortform{1258} & \shortform{3'4'5'8'} & -1 &  \shortform{1258} & \shortform{3'4'6'7'} & -1 \\ 
\shortform{1267} & \shortform{1'2'5'8'} & -1 &  \shortform{1267} & \shortform{1'2'6'7'} & -1 &  \shortform{1267} & \shortform{1'4'5'6'} & 1 &  \shortform{1267} & \shortform{1'4'7'8'} & -1 &  \shortform{1267} & \shortform{2'3'5'6'} & -1 \\ 
\shortform{1267} & \shortform{2'3'7'8'} & 1 &  \shortform{1267} & \shortform{3'4'5'8'} & -1 &  \shortform{1267} & \shortform{3'4'6'7'} & -1 &  \shortform{1268} & \shortform{1'2'5'7'} & 1 &  \shortform{1268} & \shortform{1'2'6'8'} & -1 \\ 
\shortform{1268} & \shortform{1'3'5'6'} & -1 &  \shortform{1268} & \shortform{1'3'7'8'} & 1 &  \shortform{1268} & \shortform{2'4'5'6'} & -1 &  \shortform{1268} & \shortform{2'4'7'8'} & 1 &  \shortform{1268} & \shortform{3'4'5'7'} & 1 \\ 
\shortform{1268} & \shortform{3'4'6'8'} & -1 &  \shortform{1278} & \shortform{1'2'7'8'} & -2 &  \shortform{1278} & \shortform{3'4'5'6'} & -2 &  \shortform{12} & \shortform{1'2'3'4'5'6'} & 2 &  \shortform{12} & \shortform{1'2'3'4'7'8'} & -2 \\ 
\shortform{12} & \shortform{1'2'5'6'7'8'} & -2 &  \shortform{12} & \shortform{3'4'5'6'7'8'} & -2 &  \shortform{134567} & \shortform{1'7'} & 2 &  \shortform{134567} & \shortform{2'8'} & 2 &  \shortform{134567} & \shortform{3'5'} & -2 \\ 
\shortform{134567} & \shortform{4'6'} & 2 &  \shortform{134568} & \shortform{1'8'} & 2 &  \shortform{134568} & \shortform{2'7'} & -2 &  \shortform{134568} & \shortform{3'6'} & -2 &  \shortform{134568} & \shortform{4'5'} & -2 \\ 
\shortform{134578} & \shortform{1'5'} & -2 &  \shortform{134578} & \shortform{2'6'} & 2 &  \shortform{134578} & \shortform{3'7'} & -2 &  \shortform{134578} & \shortform{4'8'} & -2 &  \shortform{1345} & \shortform{1'2'3'7'} & 1 \\ 
\shortform{1345} & \shortform{1'2'4'8'} & 1 &  \shortform{1345} & \shortform{1'3'4'5'} & -1 &  \shortform{1345} & \shortform{1'5'7'8'} & -1 &  \shortform{1345} & \shortform{2'3'4'6'} & 1 &  \shortform{1345} & \shortform{2'6'7'8'} & 1 \\ 
\shortform{1345} & \shortform{3'5'6'7'} & -1 &  \shortform{1345} & \shortform{4'5'6'8'} & -1 &  \shortform{134678} & \shortform{1'6'} & -2 &  \shortform{134678} & \shortform{2'5'} & -2 &  \shortform{134678} & \shortform{3'8'} & -2 \\ 
\shortform{134678} & \shortform{4'7'} & 2 &  \shortform{1346} & \shortform{1'2'3'8'} & 1 &  \shortform{1346} & \shortform{1'2'4'7'} & -1 &  \shortform{1346} & \shortform{1'3'4'6'} & -1 &  \shortform{1346} & \shortform{1'6'7'8'} & -1 \\ 
\shortform{1346} & \shortform{2'3'4'5'} & -1 &  \shortform{1346} & \shortform{2'5'7'8'} & -1 &  \shortform{1346} & \shortform{3'5'6'8'} & -1 &  \shortform{1346} & \shortform{4'5'6'7'} & 1 &  \shortform{1347} & \shortform{1'2'3'5'} & -1 \\ 
\shortform{1347} & \shortform{1'2'4'6'} & 1 &  \shortform{1347} & \shortform{1'3'4'7'} & -1 &  \shortform{1347} & \shortform{1'5'6'7'} & 1 &  \shortform{1347} & \shortform{2'3'4'8'} & -1 &  \shortform{1347} & \shortform{2'5'6'8'} & 1 \\ 
\shortform{1347} & \shortform{3'5'7'8'} & -1 &  \shortform{1347} & \shortform{4'6'7'8'} & 1 &  \shortform{1348} & \shortform{1'2'3'6'} & -1 &  \shortform{1348} & \shortform{1'2'4'5'} & -1 &  \shortform{1348} & \shortform{1'3'4'8'} & -1 \\ 
\shortform{1348} & \shortform{1'5'6'8'} & 1 &  \shortform{1348} & \shortform{2'3'4'7'} & 1 &  \shortform{1348} & \shortform{2'5'6'7'} & -1 &  \shortform{1348} & \shortform{3'6'7'8'} & -1 &  \shortform{1348} & \shortform{4'5'7'8'} & -1 \\ 
\shortform{135678} & \shortform{1'3'} & -2 &  \shortform{135678} & \shortform{2'4'} & 2 &  \shortform{135678} & \shortform{5'7'} & 2 &  \shortform{135678} & \shortform{6'8'} & 2 &  \shortform{1356} & \shortform{1'2'5'7'} & -1 \\ 
\shortform{1356} & \shortform{1'2'6'8'} & -1 &  \shortform{1356} & \shortform{1'3'5'6'} & -1 &  \shortform{1356} & \shortform{1'3'7'8'} & -1 &  \shortform{1356} & \shortform{2'4'5'6'} & 1 &  \shortform{1356} & \shortform{2'4'7'8'} & 1 \\ 
\shortform{1356} & \shortform{3'4'5'7'} & 1 &  \shortform{1356} & \shortform{3'4'6'8'} & 1 &  \shortform{1357} & \shortform{1'3'5'7'} & -2 &  \shortform{1357} & \shortform{2'4'6'8'} & -2 &  \shortform{1358} & \shortform{1'3'5'8'} & -1 \\ 
\shortform{1358} & \shortform{1'3'6'7'} & -1 &  \shortform{1358} & \shortform{1'4'5'7'} & -1 &  \shortform{1358} & \shortform{1'4'6'8'} & -1 &  \shortform{1358} & \shortform{2'3'5'7'} & 1 &  \shortform{1358} & \shortform{2'3'6'8'} & 1 \\ 
\shortform{1358} & \shortform{2'4'5'8'} & 1 &  \shortform{1358} & \shortform{2'4'6'7'} & 1 &  \shortform{1367} & \shortform{1'3'5'8'} & -1 &  \shortform{1367} & \shortform{1'3'6'7'} & -1 &  \shortform{1367} & \shortform{1'4'5'7'} & 1 \\ 
\shortform{1367} & \shortform{1'4'6'8'} & 1 &  \shortform{1367} & \shortform{2'3'5'7'} & -1 &  \shortform{1367} & \shortform{2'3'6'8'} & -1 &  \shortform{1367} & \shortform{2'4'5'8'} & 1 &  \shortform{1367} & \shortform{2'4'6'7'} & 1 \\ 
\shortform{1368} & \shortform{1'3'6'8'} & -2 &  \shortform{1368} & \shortform{2'4'5'7'} & -2 &  \shortform{1378} & \shortform{1'2'5'7'} & 1 &  \shortform{1378} & \shortform{1'2'6'8'} & 1 &  \shortform{1378} & \shortform{1'3'5'6'} & -1 \\ 
\shortform{1378} & \shortform{1'3'7'8'} & -1 &  \shortform{1378} & \shortform{2'4'5'6'} & 1 &  \shortform{1378} & \shortform{2'4'7'8'} & 1 &  \shortform{1378} & \shortform{3'4'5'7'} & -1 &  \shortform{1378} & \shortform{3'4'6'8'} & -1 \\ 
\shortform{13} & \shortform{1'2'3'4'5'7'} & 2 &  \shortform{13} & \shortform{1'2'3'4'6'8'} & 2 &  \shortform{13} & \shortform{1'3'5'6'7'8'} & -2 &  \shortform{13} & \shortform{2'4'5'6'7'8'} & 2 &  \shortform{145678} & \shortform{1'4'} & -2 \\ 
\shortform{145678} & \shortform{2'3'} & -2 &  \shortform{145678} & \shortform{5'8'} & 2 &  \shortform{145678} & \shortform{6'7'} & -2 &  \shortform{1456} & \shortform{1'2'5'8'} & -1 &  \shortform{1456} & \shortform{1'2'6'7'} & 1 \\ 
\shortform{1456} & \shortform{1'4'5'6'} & -1 &  \shortform{1456} & \shortform{1'4'7'8'} & -1 &  \shortform{1456} & \shortform{2'3'5'6'} & -1 &  \shortform{1456} & \shortform{2'3'7'8'} & -1 &  \shortform{1456} & \shortform{3'4'5'8'} & 1 \\ 
\shortform{1456} & \shortform{3'4'6'7'} & -1 &  \shortform{1457} & \shortform{1'3'5'8'} & -1 &  \shortform{1457} & \shortform{1'3'6'7'} & 1 &  \shortform{1457} & \shortform{1'4'5'7'} & -1 &  \shortform{1457} & \shortform{1'4'6'8'} & 1 \\ 
\shortform{1457} & \shortform{2'3'5'7'} & -1 &  \shortform{1457} & \shortform{2'3'6'8'} & 1 &  \shortform{1457} & \shortform{2'4'5'8'} & -1 &  \shortform{1457} & \shortform{2'4'6'7'} & 1 &  \shortform{1458} & \shortform{1'4'5'8'} & -2 \\ 
\shortform{1458} & \shortform{2'3'6'7'} & -2 &  \shortform{1467} & \shortform{1'4'6'7'} & -2 &  \shortform{1467} & \shortform{2'3'5'8'} & -2 &  \shortform{1468} & \shortform{1'3'5'8'} & -1 &  \shortform{1468} & \shortform{1'3'6'7'} & 1 \\ 
\shortform{1468} & \shortform{1'4'5'7'} & 1 &  \shortform{1468} & \shortform{1'4'6'8'} & -1 &  \shortform{1468} & \shortform{2'3'5'7'} & 1 &  \shortform{1468} & \shortform{2'3'6'8'} & -1 &  \shortform{1468} & \shortform{2'4'5'8'} & -1 \\ 
\shortform{1468} & \shortform{2'4'6'7'} & 1 &  \shortform{1478} & \shortform{1'2'5'8'} & 1 &  \shortform{1478} & \shortform{1'2'6'7'} & -1 &  \shortform{1478} & \shortform{1'4'5'6'} & -1 &  \shortform{1478} & \shortform{1'4'7'8'} & -1 \\ 
\shortform{1478} & \shortform{2'3'5'6'} & -1 &  \shortform{1478} & \shortform{2'3'7'8'} & -1 &  \shortform{1478} & \shortform{3'4'5'8'} & -1 &  \shortform{1478} & \shortform{3'4'6'7'} & 1 &  \shortform{14} & \shortform{1'2'3'4'5'8'} & 2 \\ 
\shortform{14} & \shortform{1'2'3'4'6'7'} & -2 &  \shortform{14} & \shortform{1'4'5'6'7'8'} & -2 &  \shortform{14} & \shortform{2'3'5'6'7'8'} & -2 &  \shortform{1567} & \shortform{1'2'3'5'} & -1 &  \shortform{1567} & \shortform{1'2'4'6'} & 1 \\ 
\shortform{1567} & \shortform{1'3'4'7'} & 1 &  \shortform{1567} & \shortform{1'5'6'7'} & -1 &  \shortform{1567} & \shortform{2'3'4'8'} & 1 &  \shortform{1567} & \shortform{2'5'6'8'} & -1 &  \shortform{1567} & \shortform{3'5'7'8'} & -1 \\ 
\shortform{1567} & \shortform{4'6'7'8'} & 1 &  \shortform{1568} & \shortform{1'2'3'6'} & -1 &  \shortform{1568} & \shortform{1'2'4'5'} & -1 &  \shortform{1568} & \shortform{1'3'4'8'} & 1 &  \shortform{1568} & \shortform{1'5'6'8'} & -1 \\ 
\shortform{1568} & \shortform{2'3'4'7'} & -1 &  \shortform{1568} & \shortform{2'5'6'7'} & 1 &  \shortform{1568} & \shortform{3'6'7'8'} & -1 &  \shortform{1568} & \shortform{4'5'7'8'} & -1 &  \shortform{1578} & \shortform{1'2'3'7'} & -1 \\ 
\shortform{1578} & \shortform{1'2'4'8'} & -1 &  \shortform{1578} & \shortform{1'3'4'5'} & -1 &  \shortform{1578} & \shortform{1'5'7'8'} & -1 &  \shortform{1578} & \shortform{2'3'4'6'} & 1 &  \shortform{1578} & \shortform{2'6'7'8'} & 1 \\ 
\shortform{1578} & \shortform{3'5'6'7'} & 1 &  \shortform{1578} & \shortform{4'5'6'8'} & 1 &  \shortform{15} & \shortform{1'2'3'5'6'7'} & -2 &  \shortform{15} & \shortform{1'2'4'5'6'8'} & -2 &  \shortform{15} & \shortform{1'3'4'5'7'8'} & -2 \\ 
\shortform{15} & \shortform{2'3'4'6'7'8'} & 2 &  \shortform{1678} & \shortform{1'2'3'8'} & -1 &  \shortform{1678} & \shortform{1'2'4'7'} & 1 &  \shortform{1678} & \shortform{1'3'4'6'} & -1 &  \shortform{1678} & \shortform{1'6'7'8'} & -1 \\ 
\shortform{1678} & \shortform{2'3'4'5'} & -1 &  \shortform{1678} & \shortform{2'5'7'8'} & -1 &  \shortform{1678} & \shortform{3'5'6'8'} & 1 &  \shortform{1678} & \shortform{4'5'6'7'} & -1 &  \shortform{16} & \shortform{1'2'3'5'6'8'} & -2 \\ 
\shortform{16} & \shortform{1'2'4'5'6'7'} & 2 &  \shortform{16} & \shortform{1'3'4'6'7'8'} & -2 &  \shortform{16} & \shortform{2'3'4'5'7'8'} & -2 &  \shortform{17} & \shortform{1'2'3'5'7'8'} & -2 &  \shortform{17} & \shortform{1'2'4'6'7'8'} & 2 \\ 
\shortform{17} & \shortform{1'3'4'5'6'7'} & 2 &  \shortform{17} & \shortform{2'3'4'5'6'8'} & 2 &  \shortform{18} & \shortform{1'2'3'6'7'8'} & -2 &  \shortform{18} & \shortform{1'2'4'5'7'8'} & -2 &  \shortform{18} & \shortform{1'3'4'5'6'8'} & 2 \\ 
\shortform{18} & \shortform{2'3'4'5'6'7'} & -2 & & & & & & & & & & & & \\ \hline
\end{tabular}
}
\caption[Non-zero terms of the $8$-form $\spinform{9}$ in $\RR^{16}$]{The non-zero terms of the $8$-form $\spinform{9}$ are listed. Reading example: a table entry $||\shortform{145678}\quad\shortform{2'3'}\quad -2||$ means that $\spinform{9}= \dots -2\shortform{1456782'3'}+\dots$.
There are $702$ non-zero coefficients of $\spinform{9}$, and this table exposes $351$ of them, so that 
\[
\spinform{9}=\text{table}+\star\text{table} \qquad
\]
where $\star$ denotes the Hodge star.}\label{explicitspin9}
\end{table}

\begin{re}
The monomials of $\spinform{9}$ can be partitioned in eight different families. For any subset $\{a,b,c,d\}$ of indexes in $\{1,\dots,8\}$, we say that $\shortform{abcd}$ is of \emph{Cayley type} if and only if $\shortform{d}=[\pm]\shortform{a}\times\shortform{b}\times\shortform{c}$ in the double cross product of $\OO\cong\RR^8$ defined by Formula~\eqref{eq:doublecross}. Observe that this definition does not depend on the ordering of $a,b,c,d$.

Then in $\text{table}+\star$ we can recognize the following patterns:
\begin{enumerate}
\item $2$ monomials $\shortform{12345678}$ and $\shortform{1'2'3'4'5'6'7'8'}$, both with coefficient $-14$;
\item $70$ monomials $\shortform{abcda'b'c'd'}$, one for each of the $\binom{8}{4}$ choices $\{a,b,c,d\}\subset\{1,2,\dots,8\}$. Among them, the $14$ of Cayley type have coefficient $\pm 2$, the remaining $56$ have coefficient $\pm 1$, depending on the orientation; 
\item $70$ monomials $\shortform{abcd\alpha'\beta'\gamma'\delta'}$, where $\alpha,\beta,\gamma,\delta$ are all different from $a,b,c,d$. Again, the $14$ of Cayley type have coefficient $\pm 2$, and the remaining $56$ have coefficient $\pm 1$;
\item $336$ monomials $\shortform{abcd\alpha'\beta'\gamma'\delta'}$ with two coincidences, i.e.\ exactly two between $\alpha,\beta,\gamma,\delta$ coincide with two between $a,b,c,d$. Cayley type is here excluded, so that there are $56=70-14$ choices for $\{a,b,c,d\}$, and for each of them there are exactly $6=\binom{4}{2}$ choices for $\{\alpha,\beta,\gamma,\delta\}$: in fact, for each choice of coincidence (for instance, $\alpha=c,\beta=d$), the remaining two indexes are obtained as double cross products (in our example, $\shortform{\gamma}=\shortform{\alpha}\times\shortform{\beta}\times\shortform{a}$ and $\shortform{\delta}=\shortform{\alpha}\times\shortform{\beta}\times\shortform{b}$). Here all monomials have coefficients $\pm 1$, according to the orientation;
\item $28$ monomials $\shortform{abcd\alpha\beta\gamma'\delta'}$, where $\gamma,\delta$ are all different from $a,b,c,d,\alpha,\beta$. The coefficients are $\pm 2$, according to the orientation;
\item $28$ monomials $\shortform{abc'd'\alpha'\beta'\gamma'\delta'}$, where $a,b$ are all different from $c,d,\alpha,\beta,\gamma,\delta$. The coefficients are $\pm 2$, according to the orientation;
\item $84$ monomials $\shortform{abcd\alpha\beta\gamma'\delta'}$, where $\{\gamma,\delta\}\subset\{a,b,c,d,\alpha,\beta\}$. The coefficients are $\pm 2$, and only choices such that the remaining indexes $\{a,b,c,d,\alpha,\beta\}-\{\gamma,\delta\}$ correspond to Cayley type are admitted;
\item $84$ monomials $\shortform{abc'd'\alpha'\beta'\gamma'\delta'}$, where $\{a,b\}\subset\{c,d,\alpha,\beta,\gamma,\delta\}$. The coefficients are $\pm 2$, and only choices such that the remaining indexes $\{c,d,\alpha,\beta,\gamma,\delta\}-\{a,b\}$ correspond to Cayley type are admitted.\hfill\qed
\end{enumerate}
\end{re}

\section{The main formula and its corollaries}\label{sec:charpoly}


Let $M^{16}$ be a Riemannian manifold equipped with a $\Spin{9}$-structure, as in Definition~\ref{def:spin9structure}. The linear algebra developed in Section~\ref{sec:spin9} gives then local K\"ahler matrices on $M$, namely the skew-symmetric matrices $\psi\ug(\psi_{\alpha\beta})$, where $\psi_{\alpha\beta}$ are the K\"ahler forms of the $36$ local almost complex structures $\J_{\alpha\beta}$, for $1\leq\alpha<\beta\leq 9$ (cf.\ Formulas~\eqref{eq:J1} and~\eqref{eq:J2}). Moreover, we have a $\Spin{9}$-form on $M$, that is, the $8$-form locally written as $\spinform{9}$ given by Formula~\eqref{eq:implicitspin9}. We denote it by the same symbol $\spinform{9}$.

A local K\"ahler matrix $\psi$ is a local $2$-form taking values in $\lieso{9}$, and $\psi$, $\psi'$ associated with different local orthonormal bases of sections are related as usual by
\begin{equation}
\psi'= A^{-1}\psi A\enspace,
\end{equation}
where $A$ denotes the change of basis, with values in $\SO{9}$. Thus the characteristic polynomial $\det(tI-\psi)$ is globally defined.

The following is the main theorem of this paper.

\begin{te}\label{teo:main}
The $8$-form $\spinform{9}$ associated with the $\Spin{9}$-structure $V^9\rightarrow M^{16}$ coincides, up to a constant, with the coefficient $\tau_4(\psi)$ of $t^5$ in the characteristic polynomial
\[
\det(tI-\psi)=t^9+\tau_4(\psi)t^5+\tau_8(\psi)t\enspace,
\]
where $\psi$ is any local K\"ahler matrix of $M$. The proportionality factor is given by
\[
360\spinform{9}=\tau_4(\psi)\enspace.
\]
\end{te}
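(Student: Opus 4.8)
The plan is to avoid expanding $\tau_4(\psi)$ into its $702$ monomials and instead to exploit the fact, recalled in the Introduction, that the space of $\Spin{9}$-invariant exterior $8$-forms on $\RR^{16}$ is one-dimensional, namely $\Lambda^8_1=\RR\cdot\spinform{9}$. Thus it suffices to prove that $\tau_4(\psi)$ is a \emph{nonzero} $\Spin{9}$-invariant $8$-form: this forces $\tau_4(\psi)=\lambda\,\spinform{9}$ for a unique constant $\lambda$, and the whole statement then reduces to computing $\lambda$ by comparing the two forms on a single monomial.

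First I would establish the invariance of $\tau_4(\psi)$. Under the $\Spin{9}$-action on $\RR^{16}$ the spanning involutions $\I_1,\dots,\I_9$ transform by the vector representation of $\SO{9}$, so the complex structures $\J_{\alpha\beta}=\I_\alpha\I_\beta$, and with them their K\"ahler forms $\psi_{\alpha\beta}$, transform as the entries of $\Lambda^2$ of the vector representation; concretely $g^*\psi=A^{\mathsf T}\psi A$ with $A\in\SO{9}$. The coefficients of $\det(tI-\psi)$ are polynomials in the $\psi_{\alpha\beta}$ built by wedge products, and since $2$-forms generate a commutative subalgebra of $\Lambda^*\RR^{16}$ these coefficients are invariant under orthogonal conjugation; therefore $g^*\tau_4(\psi)=\tau_4(A^{\mathsf T}\psi A)=\tau_4(\psi)$. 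Consequently $\tau_4(\psi)\in\Lambda^8_1$, and only the constant $\lambda$ and the non-vanishing of $\tau_4(\psi)$ remain to be settled.

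To pin down $\lambda$ I would compare the coefficient of the purely unprimed monomial $\shortform{12345678}$, which equals $-14$ in $\spinform{9}$ by Table~\ref{explicitspin9}. Writing $\tau_4(\psi)$ as the sum of its $4\times 4$ principal Pfaffians squared,
\[
\tau_4(\psi)=\sum_{i<j<k<l}\bigl(\psi_{ij}\psi_{kl}-\psi_{ik}\psi_{jl}+\psi_{il}\psi_{jk}\bigr)^2,
\]
and restricting to $\RR^8=\{x'=0\}$ (an algebra homomorphism on forms), every mixed form $\psi_{\alpha 9}$ of~\eqref{8} dies and each $\psi_{\alpha\beta}$ of~\eqref{28} is replaced by its unprimed part $\omega_{\alpha\beta}$, which is one of the $28$ K\"ahler forms $\phi_\cdot,\varphi_\cdot$ on $\RR^8$ of Section~\ref{spin7}. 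Hence the coefficient of $\shortform{12345678}$ in $\tau_4(\psi)$ equals that of $\tau_4(\omega)$ for the $8\times 8$ matrix $\omega=(\omega_{\alpha\beta})_{\alpha,\beta=1}^{8}$. Here $\tau_2(\omega)=(\phi_i^2+\dots+\phi_h^2)+\tau_2(\varphi)=-6\spinform{7}+6\spinform{7}=0$ by Proposition~\ref{pr:spin7tau}, so Newton's identities (valid over the commutative algebra generated by the $2$-forms) collapse $\tau_4(\omega)$ to $-\tfrac14\tr{\omega^4}$; a direct evaluation using~\eqref{phi7} and~\eqref{phi21} returns the value $-5040=360\cdot(-14)$. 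This both shows $\tau_4(\psi)\neq 0$ and gives $\lambda=360$, that is $360\,\spinform{9}=\tau_4(\psi)$.

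The main obstacle is the honest determination of the factor $360$: the representation-theoretic step yields proportionality for free but is silent about the constant, so everything rests on the single-coefficient computation. Although this has been reduced to an $8$-dimensional problem, it still amounts to the $70$-term Pfaffian sum (equivalently to $\tr{\omega^4}$) and is most safely carried out with the help of a computer algebra system, exactly as in the derivation of Table~\ref{explicitspin9}. As a fully independent check one may dispense with the invariance argument altogether and verify directly, monomial by monomial, that $\tfrac{1}{360}\tau_4(\psi)$ reproduces all $702$ entries of the Table.
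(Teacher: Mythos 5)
Your proof is correct and its skeleton is exactly the paper's: $\Spin{9}$-invariance of $\tau_4(\psi)$ together with the one-dimensionality of $\Lambda^8_1$ gives proportionality, and the constant is fixed by comparing the single coefficient of $\shortform{12345678}$, which is $-14$ in $\spinform{9}$ by Table~\ref{explicitspin9} and $-5040$ in $\tau_4(\psi)$. Where you genuinely differ is in how that one coefficient is evaluated. The paper substitutes Formulas~\eqref{28} and~\eqref{8} directly into the $70$-term Pfaffian expansion of the $9\times 9$ matrix $\psi$; you instead pull back along $\RR^8=\{x'=0\}\hookrightarrow\RR^{16}$, which kills all the $\psi_{\alpha 9}$ and truncates each $\psi_{\alpha\beta}$ to its unprimed half, then use Proposition~\ref{pr:spin7tau} to get $\tau_2(\omega)=-6\spinform{7}+6\spinform{7}=0$ (equivalently, this is the restriction of the identity $\tau_2(\psi)=0$ of Proposition~\ref{pr:charpoly}), and finally apply Newton's identities to collapse the target to $\tau_4(\omega)=-\tfrac{1}{4}\tr{\omega^4}$. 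All of this is sound: restriction is an algebra homomorphism under which the coefficient of the purely unprimed top monomial survives, the $2$-forms generate a commutative algebra so the characteristic-polynomial identities apply verbatim, and the odd invariants $e_1,e_3,\tr{\omega},\tr{\omega^3}$ of the skew-symmetric matrix $\omega$ vanish, so the reduction $4e_4=-(p_4+e_2p_2)=-p_4$ is legitimate. What your route buys is a smaller, purely $8$-dimensional evaluation that reuses the $\Spin{7}$ material of Section~\ref{spin7}; what it does not buy is independence from machine computation, since both your $\tr{\omega^4}$ and the paper's Pfaffian sum are in the end evaluated by brute force, and both proofs equally lean on the computer-generated Table~\ref{explicitspin9} for the value $-14$.
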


\begin{proof}
The fact that in the characteristic polynomial only the terms of degree $9$, $5$ and $1$ survive was already observed in Proposition~\ref{pr:charpoly}. The $8$-form $\tau_4(\psi)$ is naturally $\Spin{9}$-invariant and thus, if not zero, it has to be proportional to $\spinform{9}$. Then, to compute the proportionality factor, it is sufficient to look at any of the terms of $\spinform{9}$ and $\tau_4(\psi)$. We consider the term $\shortform{12345678}$.

From Table~\ref{explicitspin9}, we see that the coefficient for $\spinform{9}$ is $-14$. As for $\tau_4(\psi)$, we first observe that it can be computed with a summation over the squared Pfaffians of the principal $4\times 4$ submatrices of $\spinform{9}$:
\[
\tau_4(\psi)=\sum_{1\leq \alpha_1 < \alpha_2 <  \alpha_3 <  \alpha_4 \leq 9} ( \psi_{\alpha_1 \alpha_2} \wedge \psi_{\alpha_3 \alpha_4} - \psi_{\alpha_1 \alpha_3} \wedge \psi_{\alpha_2  \alpha_4} + \psi_{\alpha_1 \alpha_4} \wedge \psi_{\alpha_2  \alpha_3} )^2\enspace,
\]
and then we compute it using Formulas~\eqref{28} and~\eqref{8}, thus obtaining $-5040=-14\cdot 360$.
\end{proof}

In particular the theorem stated in the Introduction follows.


Another consequence of Theorem~\ref{teo:main} is:

\begin{co}
The K\"ahler forms of the $\Spin{9}$-structure of $\OO^2$ allow to compute the integral~\eqref{ber} as
\[
\int_{\OP{1}}p_l^*\nu_l\,dl=\frac{\pi^4}{110880\cdot 360}\tau_4(\psi).
\]
\end{co}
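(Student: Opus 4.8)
The plan is to read off the result by eliminating $\spinform{9}$ between two relations that are already available. First I would invoke Formula~\eqref{eq:implicitspin9}, where the normalizing constant has been fixed to $c=110880/\pi^4$, so that
\[
\spinform{9}=\frac{110880}{\pi^4}\int_{\OP{1}}p_{l}^*\nu_{l}\,dl\enspace.
\]
Inverting this relation expresses the integral directly in terms of the canonical $8$-form as $\int_{\OP{1}}p_{l}^*\nu_{l}\,dl=\frac{\pi^4}{110880}\spinform{9}$.

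Next I would substitute the main identity of Theorem~\ref{teo:main}, namely $\spinform{9}=\frac{1}{360}\tau_4(\psi)$, into the right-hand side. This immediately yields
\[
\int_{\OP{1}}p_{l}^*\nu_{l}\,dl=\frac{\pi^4}{110880}\cdot\frac{1}{360}\tau_4(\psi)=\frac{\pi^4}{110880\cdot 360}\tau_4(\psi)\enspace,
\]
which is exactly the claimed formula.

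There is no genuine obstacle here: the statement is a purely formal corollary, obtained by combining the explicit integral representation~\eqref{eq:implicitspin9} with the characteristic-polynomial formula of the main theorem. All the substantive content has already been supplied elsewhere --- the \Mathematica\ evaluation of the $\binom{16}{8}$ integrals in~\eqref{spin9form} that pins down $c=110880/\pi^4$, together with the Pfaffian computation in the proof of Theorem~\ref{teo:main} that produces the factor $360$. The only point requiring attention is that both ingredients are stated for the \emph{same} normalization of $c$; this is automatic, since~\eqref{eq:implicitspin9} is precisely the normalization under which Theorem~\ref{teo:main} was established, so the two constants multiply cleanly in the denominator.
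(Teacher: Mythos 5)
Your proposal is correct and is exactly the argument the paper leaves implicit: the corollary is stated as an immediate consequence of Theorem~\ref{teo:main}, obtained by inverting the normalization $\spinform{9}=\frac{110880}{\pi^4}\int_{\OP{1}}p_l^*\nu_l\,dl$ from~\eqref{eq:implicitspin9} and substituting $\spinform{9}=\frac{1}{360}\tau_4(\psi)$. Your remark that both ingredients use the same normalization of $c$ is the right (and only) point of care, and it holds for the reason you give.
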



When $\Spin{9}$ is the holonomy group of the Riemannian manifold $M^{16}$, the Levi-Civita connection $\nabla$ preserves the vector bundle $V^9$, and the local sections $\I_1,\dots,\I_9$ of $V^9$ induce the K\"ahler forms $\psi_{\alpha\beta}$ on $M$ as local curvature forms.

\begin{co}\label{hol} 
Let $M^{16}$ be a compact Riemannian manifold with holonomy $\Spin{9}$, i.e.\ $M^{16}$ is either isometric to the Cayley projective plane $\OP{2}$ or to any compact quotient of the Cayley hyperbolic plane $\OH{2}$. Then its Pontrjagin classes are given by
\[
p_1(M)=0\enspace,\quad p_2(M)=-\frac{45}{2\pi^4}[\spinform{9}]\enspace,\quad p_3(M)=0\enspace,\quad p_4(M)=-\frac{13}{256\pi^8}[\tau_8(\psi)]\enspace.
\]
\end{co}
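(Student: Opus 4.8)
The plan is to combine Chern--Weil theory with the branching of the holonomy representation. Since the holonomy is $\Spin{9}$, the Levi-Civita connection reduces to $V^9$ and $\psi$ is its curvature, an $\lieso{9}\cong\liespin{9}$-valued $2$-form. First I would record the Chern--Weil expressions for $V$: denoting by $\pm x_1,\dots,\pm x_4,0$ the formal Chern roots of $V\otimes\CC$ (so that $x_a$ represents the $a$-th skew-eigenvalue of $\psi$ divided by $2\pi$) and setting $\sigma_k\ug e_k(x_1^2,\dots,x_4^2)$, one has $\sigma_k=\tau_{2k}(\psi)/(2\pi)^{2k}$ and $p_k(V)=[\sigma_k]$. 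Proposition~\ref{pr:charpoly} ($\tau_2=\tau_6=0$) thus gives at once $\sigma_1=\sigma_3=0$, together with $p_2(V)=[\tau_4(\psi)/(2\pi)^4]$ and $p_4(V)=[\tau_8(\psi)/(2\pi)^8]$.

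The subtlety is that the classes asked for are those of $TM$, which carries the real spin module $\Delta_9$, whereas $V$ carries the vector representation $\RR^9$; the two families of Pontrjagin classes are therefore different and must be compared through weights. The weights of $\RR^9$ being $\pm x_a$ and $0$, those of $\Delta_9$ are $\tfrac12(\pm x_1\pm x_2\pm x_3\pm x_4)$. Grouping the $16$ weights into the $8$ pairs $\pm y_j$ with $2y_j=x_1\pm x_2\pm x_3\pm x_4$, I obtain $p(TM)=\prod_{j=1}^{8}(1+y_j^2)$.

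The core of the argument is then a symmetric-function computation performed under $\sigma_1=\sigma_3=0$. Summing over the eight sign choices, the odd sign combinations cancel and the power sums $P_m\ug\sum_j y_j^{2m}$ reduce to $P_1=0$, $P_2=2\sigma_2$, $P_3=0$ and $P_4=\tfrac12\sigma_2^2+34\sigma_4$; Newton's identities for the eight quantities $y_j^2$ then give $p_1(TM)=0$, $p_2(TM)=-\sigma_2$, $p_3(TM)=0$ and $p_4(TM)=\tfrac38\sigma_2^2-\tfrac{17}{2}\sigma_4$. Feeding in $\tau_4(\psi)=360\,\spinform{9}$ from Theorem~\ref{teo:main}, so that $\sigma_2=\tfrac{45}{2\pi^4}\spinform{9}$, yields $p_2(M)=-\sigma_2=-\tfrac{45}{2\pi^4}[\spinform{9}]$, and $p_1=p_3=0$.

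Only the coefficient of $p_4(M)$ then remains. Both $\sigma_2^2$ and $\sigma_4$ are top-degree forms on $M^{16}$, hence proportional: $\sigma_2^2=\kappa\,\sigma_4$, i.e.\ $\tau_4(\psi)^2=\kappa\,\tau_8(\psi)$ for a universal constant $\kappa$. I would determine $\kappa=-12$ explicitly from Formulas~\eqref{28}--\eqref{8} with \Mathematica; a cross-check is the classical $p_2(\OP{2})=6u$, $p_4(\OP{2})=39u^2$, which force $p_4=\tfrac{13}{12}p_2^2$ and hence the same $\kappa$. Substituting gives $p_4(TM)=\bigl(\tfrac38(-12)-\tfrac{17}{2}\bigr)\sigma_4=-13\,\sigma_4=-\tfrac{13}{256\pi^8}[\tau_8(\psi)]$. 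I expect the two genuine difficulties to be the spinor-versus-vector weight bookkeeping together with the correct Chern--Weil sign normalization, and the explicit evaluation of the constant $\kappa$; the vanishing of $p_1$ and $p_3$ is automatic, since $H^4$ and $H^{12}$ of $\OP{2}$ (or of a compact quotient of $\OH{2}$) vanish, and provides a useful consistency check.
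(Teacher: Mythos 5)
Your proposal is correct, and although the overall strategy is the same as the paper's (Chern--Weil theory for $V^9$, then a comparison of $p_k(V)$ with $p_k(TM)$ reflecting the fact that $V$ carries the vector representation of $\Spin{9}$ while $TM$ carries the spin representation $\Delta_9$), the way you carry out the comparison is genuinely different. The paper substitutes $p_1(V)=p_3(V)=0$ into relations quoted from~\cite{fr}, whose $p_4$-relation contains no $p_2(V)^2$ term, so that $p_4(M)=-13p_4(V)$ drops out immediately. You instead re-derive the comparison from the weights $\tfrac12(\pm x_1\pm x_2\pm x_3\pm x_4)$ of $\Delta_9$; your computations check out (under $\sigma_1=\sigma_3=0$ one indeed gets $P_2=2\sigma_2$, $P_4=\tfrac12\sigma_2^2+34\sigma_4$, hence $p_2(M)=-\sigma_2$ and $p_4(M)=\tfrac38\sigma_2^2-\tfrac{17}{2}\sigma_4$), but your $p_4$-relation retains the term $\tfrac38p_2(V)^2$, so you need the extra pointwise identity $\tau_4(\psi)^2=-12\,\tau_8(\psi)$. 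Your justification of that step is sound and non-circular: both are $\Spin{9}$-invariant $16$-forms on $\RR^{16}$, hence universal multiples of the volume form, and the constant $\kappa$ can be fixed either by direct computation or from Borel--Hirzebruch~\cite{bh}, since your already-proved formulas for $p_2(M)$ and $p_4(M)$ together with $p_2(\OP{2})=6u$, $p_4(\OP{2})=39u^2$ force $[\sigma_2]=-6u$, $[\sigma_4]=-3u^2$, hence $\kappa=-12$; as $\tau_4^2$ and $\tau_8$ are parallel on $\OP{2}$, the cohomological relation is also the pointwise one, and being universal it transfers to compact quotients of $\OH{2}$.

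The comparison is worth recording, because your longer route is actually more robust. Carrying your weight computation through \emph{without} imposing $\sigma_1=\sigma_3=0$ gives the universal identity $p_4(TM)=\tfrac{35}{128}p_1^4-\tfrac{15}{16}p_1^2p_2+\tfrac38p_2^2+2p_1p_3-\tfrac{17}{2}p_4$ (all classes of $V$), and this passes the specialization test $x_1=x_2=t$, $x_3=x_4=0$, where $p(V)=(1+t^2)^2$, $p(TM)=(1+t^2)^4$: it returns $\tfrac{35}{128}(2t^2)^4-\tfrac{15}{16}(2t^2)^2t^4+\tfrac38t^8=t^8$, as it must. The $p_4$-relation quoted in the paper fails the same test, yielding $\tfrac{1}{128}\bigl(35\cdot16-120\cdot4\bigr)t^8=\tfrac58t^8$, so it is not a correct universal formula as stated; the paper's conclusion is nevertheless right, precisely because on the spaces at hand $p_2(V)^2=-12p_4(V)$, which is the very relation your argument isolates and proves. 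In short: the paper's proof is a one-line substitution into a citation, while yours is self-contained and in fact supplies the justification that the citation step needs. One small caveat: your closing remark that $p_1(M)=p_3(M)=0$ is ``automatic'' because $H^4$ and $H^{12}$ vanish is fine for $\OP{2}$ but unjustified for compact quotients of $\OH{2}$, whose fourth and twelfth Betti numbers need not vanish; this is harmless, since in your proof (as in the paper's) the vanishing really comes from $\tau_2(\psi)=\tau_6(\psi)=0$ of Proposition~\ref{pr:charpoly}.
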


\begin{proof}
By Chern-Weil theory the Pontrjagin classes of the vector bundle $V^9\rightarrow M$ are
\[
p_1(V)=0\enspace,\quad 16\pi^4 p_2(V)=\tau_4(\psi)=360[\spinform{9}]\enspace,\quad p_3(V)=0\enspace,\quad 256\pi^8 p_4(V)=[\tau_8(\psi)]\enspace.
\]
On the other hand, for any compact manifold $M$ equipped with a $\Spin{9}$-structure, the following relations hold between the Pontrjagin classes of $V=V^9$ and the Pontrjagin classes of $M$, see~\cite[Page 138]{fr}:
\begin{equation}
\begin{split}
p_1(M)&=2p_1(V)\enspace, \\
p_2(M)&=\frac{7}{4}p_1^2(V)-p_2(V)\enspace, \\
p_3(M)&=\frac{1}{8}\left(7p_1^3(V)-12p_1(V)p_2(V)+16p_3(V)\right)\enspace, \\
p_4(M)&=\frac{1}{128}\left(35p_1^4(V)-120p_1^2(V)p_2(V)+400p_1(V)p_3(V)-1664p_4(V)\right)\enspace.
\end{split}
\end{equation}
Thus, under our hypotheses, from $\tau_2(\psi)=\tau_6(\psi)=0$ we get $p_1(V)=p_3(V)=0$, so that $p_1(M)=p_3(M)=0$, $p_2(M)=-p_2(V)$ and $p_4(M)=-13p_4(V)$. The conclusion follows.
\end{proof}

The Pontrjagin classes of $\OP{2}$ are known for a long time, see~\cite[Page 535]{bh}: $p_2(\OP{2})=6u$ and $p_4(\OP{2})=39u^2$, where $u$ is the canonical generator of $H^8(\OP{2};\ZZ)$. Thus Corollary~\ref{hol} give the following representative forms of the cohomology classes $u$ and $u^2$:
\[
u=[-\frac{15}{4\pi^4}\spinform{9}]=[-\frac{1}{96\pi^4}\tau_4(\psi)]\enspace,\qquad u^2=[-\frac{1}{768\pi^8}\tau_8(\psi)]\enspace.
\]

The volume of $\OP{2}$ with respect to the canonical metric is known to be $6\pi^8/11!$, and the volume of its totally geodesic $\OP{1}\subset\OP{2}$ is the same as the volume of $S^8(\frac{1}{2})$, i.e.\ $\pi^4/840$, cf.~\cite[Page 8]{be}. Thus:
\begin{co}\label{hol'}
On the Cayley projective plane $\OP{2}$ the following relation holds:
\[
[\tau_4(\psi)]^2=12[\tau_8(\psi)]\enspace.
\]
Moreover, the integrals of $\spinform{9}^2$ and $\spinform{9}$ on $\OP{2}$ and on its totally geodesic submanifold $\OP{1}$
give
\[
\int_{\OP{1}}\spinform{9}=-224\vol(\OP{1})\enspace,\qquad \int_{\OP{2}}\spinform{9}^2=-473088\vol(\OP{2})\enspace.
\]
\end{co}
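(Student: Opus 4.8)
The plan is to reduce both statements to elementary computations in the real cohomology ring $H^*(\OP{2};\RR)\cong\RR[u]/(u^3)$, where $u$ generates $H^8(\OP{2};\ZZ)$. The inputs are the representatives obtained just after Corollary~\ref{hol}, namely $u=-\frac{1}{96\pi^4}[\tau_4(\psi)]$ and $u^2=-\frac{1}{768\pi^8}[\tau_8(\psi)]$, together with Theorem~\ref{teo:main} in the form $[\spinform{9}]=\frac{1}{360}[\tau_4(\psi)]=-\frac{4\pi^4}{15}u$.

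For the ring identity I would use that $[\tau_4(\psi)]^2$ and $[\tau_8(\psi)]$ both lie in the one-dimensional space $H^{16}(\OP{2};\RR)$ and are therefore proportional, the constant being read directly off the two representatives. Squaring the first gives $u^2=\frac{1}{9216\pi^8}[\tau_4(\psi)]^2$, while the second gives $u^2=-\frac{1}{768\pi^8}[\tau_8(\psi)]$; equating the two isolates the proportionality factor $9216/768=12$ and produces the stated relation $[\tau_4(\psi)]^2=12[\tau_8(\psi)]$.

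For the two integrals I would evaluate the relevant classes against fundamental cycles. Since the totally geodesic $\OP{1}=S^8$ represents the generator of $H_8(\OP{2};\ZZ)$ dual to $u$, one has $\int_{\OP{1}}u=1$ and hence $\int_{\OP{1}}\spinform{9}=-\frac{4\pi^4}{15}$; inserting the cited value $\vol(\OP{1})=\pi^4/840$ and checking $224/840=4/15$ yields $\int_{\OP{1}}\spinform{9}=-224\,\vol(\OP{1})$. For the top-degree integral I would square to get $[\spinform{9}]^2=\frac{16\pi^8}{225}u^2$, evaluate $u^2$ against the fundamental class of $\OP{2}$, and combine with $\vol(\OP{2})=6\pi^8/11!$, verifying $\frac{16}{225}=473088\cdot\frac{6}{11!}$ to obtain $\int_{\OP{2}}\spinform{9}^2=-473088\,\vol(\OP{2})$.

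The mechanical part is this arithmetic, which I would confirm directly. The main obstacle I anticipate is the consistent bookkeeping of signs and orientations: one must pin down the pairings $\int_{\OP{1}}u$ and $\int_{\OP{2}}u^2$ relative to the fixed orientation of $\OO^2\cong\RR^{16}$ and the induced orientation on the totally geodesic $\OP{1}$, since precisely these signs — in particular the sign of $u^2$ on the fundamental class of $\OP{2}$ — govern the minus signs recorded in the statement.
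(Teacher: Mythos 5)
Your strategy is exactly the paper's implicit one: Corollary~\ref{hol'} is meant to follow from the representatives recorded right after Corollary~\ref{hol}, namely $u=-\frac{1}{96\pi^4}[\tau_4(\psi)]$, $u^2=-\frac{1}{768\pi^8}[\tau_8(\psi)]$, $[\spinform{9}]=-\frac{4\pi^4}{15}u$, together with $\vol(\OP{1})=\pi^4/840$ and $\vol(\OP{2})=6\pi^8/11!$. Your arithmetic for the magnitudes is also right ($224/840=4/15$ and $473088\cdot 6/11!=16/225$), and the computation of $\int_{\OP{1}}\spinform{9}$ is complete once one grants $\int_{\OP{1}}u=1$.

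However, the step deriving the ring identity is wrong as written, not merely unfinished. Equating your two expressions for $u^2$ gives
\[
\frac{1}{9216\pi^8}\,[\tau_4(\psi)]^2=-\frac{1}{768\pi^8}\,[\tau_8(\psi)]\enspace,
\]
hence $[\tau_4(\psi)]^2=-12\,[\tau_8(\psi)]$: the minus sign in the representative of $u^2$ cannot be discarded, so your computation produces the \emph{negative} of the stated relation. This identity is purely ring-theoretic, so no orientation or fundamental-class convention can absorb the sign; what your argument really shows is that the stated $+12$ is incompatible with the representatives you take as input from Corollary~\ref{hol}, i.e.\ a sign must be traced and corrected somewhere (in the inputs or in the statement) before any proof along these lines can close. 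The same loose sign infects the top-dimensional integral: $[\spinform{9}]^2=\frac{16\pi^8}{225}u^2$ yields $-473088\,\vol(\OP{2})$ only if $\int_{\OP{2}}u^2=-1$, whereas your first integral needs $\int_{\OP{1}}u=+1$. These two requirements together say that the Poincar\'e dual of the oriented $[\OP{1}]$ is $-u$, which is a nontrivial assertion: since $p_1=p_3=0$, $p_2=6u$, $p_4=39u^2$ give $L_4=u^2$, Hirzebruch's theorem identifies $\int_{\OP{2}}u^2$ with the signature of $\OP{2}$, which is $+1$ for the canonical orientation (in analogy with $\CP{2}$ and $\HP{2}$), and that choice yields $+473088\,\vol(\OP{2})$. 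So the orientation question you defer as ``bookkeeping'' is precisely the mathematical content of the signs in the statement, and it is missing from the proposal; as it stands, neither the sign in the ring identity nor the sign of $\int_{\OP{2}}\spinform{9}^2$ is established.
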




\end{document}